\newtheorem*{Th*}{Theorem}
\newtheorem{Th}{Theorem}[section]
\newtheorem{Prop}{Proposition}[section]   
\newtheorem{Lem}{Lemma}[section]   
\newtheorem{Coro}{Corollary}[section]   
\newtheorem{Rem}{Remark}[section]
\newcommand{\R}{\mathbb{R}}
\newcommand{\Z}{\mathbb{Z}}
\newcommand{\C}{\mathbb{C}}
\newcommand{\T}{\mathbb{T}}
\newcommand{\LL}{\mathcal{L}}
\newcommand{\D}{\mathcal{D}}
\newcommand{\RR}{\mathcal{R}}
\newcommand{\hu}{{\widehat u}}
\newcommand{\h}{\mathfrak{h}}
\newcommand{\1}{\langle} 
\newcommand{\2}{\rangle} 
\newcommand{\tb}[2]{\genfrac{}{}{0pt}{}{#1}{#2}} 
\newcommand{\sign}{\mathop{\rm sign}}
\newcommand{\p}{{\partial}}
\newcommand{\HH}{\mathcal{H}}
\newcommand{\spec}{\mathop{\rm Spec}\nolimits}
\newcommand{\re}{\mathop{\rm Re}}
\newcommand{\Wert}{\mathop{\rm Vert}\nolimits}
\newcommand{\bcdot}{\boldsymbol{\cdot}}
\begin{document}

\title{On the analytic Birkhoff normal form of the Benjamin-Ono equation and applications}   
 
\author{P. G\'erard, T. Kappeler\footnote{T.K.  partially supported by the Swiss National Science Foundation.} 
\,and P. Topalov\footnote{P.T.  partially supported by the Simons Foundation,  Award \#526907.}}
  
\maketitle

\begin{abstract}  
In this paper we prove that the Benjamin-Ono equation admits an analytic Birkhoff normal form in an open neighborhood of 
zero in $H^{s}_{0}(\T, \R)$ for any $s>-1/2$ where $H^{s}_{0}(\T, \R)$ denotes the subspace of the Sobolev space 
$H^{s}(\T, \R)$ of elements with mean $0$. As an application we show that for any $-1/2<s<0$, the flow map of 
the Benjamin-Ono equation $\mathcal{S}_0^t : H^{s}_{0}(\T, \R)\to H^{s}_{0}(\T, \R)$ is {\em nowhere locally 
uniformly continuous} in a neighborhood of zero in $H^{s}_{0}(\T, \R)$.
\end{abstract}    

\smallskip

\noindent{\small\em Keywords}: {\small Benjamin--Ono equation, analytic Birkhoff normal form, well-posedness, solution map, 
nowhere locally uniformly continuous maps}

\smallskip

\noindent
{\small\em 2020 MSC}: {\small  37K15 primary, 47B35 secondary}

\tableofcontents

\section{Introduction}\label{Introduction}
In this paper we study the Benjamin-Ono equation on the torus $\T := \R/2\pi\Z$,
\begin{equation}\label{eq:BO}
\partial_t u=\partial_x\big(|\partial_x|u-u^2\big), 
\end{equation}
where $u\equiv u(x,t)$, $x\in\T$, $t\in\R$, is real valued and $|\partial_x| : H^\beta_c\to H^{\beta-1}_c$, $\beta\in\R$, 
is the Fourier multiplier
\[
|\partial_x| : \sum_{n\in\Z}\widehat{v}(n) e^{i n x}\mapsto\sum_{n\in\Z}|n|\,\widehat{v}(n) e^{i n x}
\]
where $\widehat{v}(n)$, $n\in\Z$, are the Fourier coefficients of $v\in H^\beta_c$ and $H^\beta_c\equiv H^\beta(\T,\C)$
is the Sobolev space of complex valued distributions on the torus $\T$. 
The equation \eqref{eq:BO} was introduced in 1967 by Benjamin \cite{Ben1967} and Davis $\&$ Acrivos \cite{DA1967} as 
a model for a special regime of internal gravity waves at the interface of two fluids. It is well known that \eqref{eq:BO} admits 
a Lax pair representation (cf. \cite{Na1979}) that leads to an infinite sequence of conserved quantities (cf. \cite{Na1979}, \cite{BK1979})
and that it can be written in Hamiltonian form  with Hamiltonian
\begin{equation}\label{eq:BO-Hamiltonian}
\HH(u):=\frac{1}{2\pi}\int_0^{2\pi}\Big(\frac{1}{2}\big(|\p_x|^{1/2}u\big)^2-\frac{1}{3}u^3\Big)\,dx
\end{equation}
by the use of the Gardner bracket
\begin{equation}\label{eq:bracket}
\{F,G\}(u):=\frac{1}{2\pi}\int_0^{2\pi}\big(\partial_x\nabla_u F\big)\nabla_u G\,dx
\end{equation}
where $\nabla_u F$ and $\nabla_u G$ are the $L^2$-gradients of $F,G\in C^1(H^s_r,\R)$ at $u\in H^s_r$ where $H^s_r\equiv H^s(\T,\R)$
is the Sobolev space of real valued distributions on $\T$.
By the Sobolev embedding $H^{1/2}_r\hookrightarrow L^3(\T,\R)$, the Hamiltonian \eqref{eq:BO-Hamiltonian} is 
well defined and real analytic on $H^{1/2}_r$, the {\em energy space} of \eqref{eq:BO}.
The problem of the existence and the uniqueness of the solutions of the Benjamin-Ono equation is well studied 
-- see \cite{GKT1}, \cite{Saut2019} and references therein. 
We refer to \cite{Saut2019} for an excellent survey and a derivation of \eqref{eq:BO}.

By using the Hamiltonian formalism for \eqref{eq:BO}, it was recently proven in \cite{GK,GKT1} that for any $s > -1/2$,
the Benjamin-Ono equation has a Birkhoff map
\begin{equation}\label{eq:Phi-introduction}
\Phi : H^{s}_{r,0}\to\h^{\frac{1}{2}+s}_{r,0} , \ u \mapsto \big( (\overline{\Phi_{-n}(u)}  )_{n \le -1}, (\Phi_n(u))_{n \ge 1} \big) ,
\end{equation}
where
\[
H^\beta_{r,0}:= H^\beta_r \cap H^\beta_{c,0}, \qquad
H^\beta_{c,0}:=\big\{u\in H^\beta_c\,\big| \, \hu(0)=0\big\} ,
\]
and
\begin{equation}\label{eq:h_{r,0}}
\h^\beta_{r,0}:=\big\{z\in\h^\beta_{c, 0}\,\big| \ z_{-n}=\overline{z}_n \ \forall n\ge 1\big\}
\end{equation}
is a real subspace in the complex Hilbert space 
\begin{equation}\label{eq:h_{c,0}}
\h^\beta_{c,0}:=\big\{ (z_n)_{n\in\Z}\,\big| \, z_0=0\,\,\,\text{\rm and}\,\,\,
\sum_{n\in\Z}\1 n\2^{2\beta}|z_n|^2 < \infty\big\},\quad \1 n \2 := \max\{1,|n|\}.
\end{equation}
By \cite[Theorem 1]{GK} and \cite[Theorem 6]{GKT1}, the Birkhoff map \eqref{eq:Phi-introduction} is a {\em homeomorphism} that 
transforms the trajectories of the Benjamin-Ono equation \eqref{eq:BO} into straight lines, which are winding around
the underlying invariant torus, defined by the action variables.
Furthermore, these trajectories evolve 
on the isospectral sets of potentials of the corresponding Lax operator (see \eqref{eq:L} below). 
In this sense, the Birkhoff map can be considered as a non-linear Fourier transform that significantly simplifies 
the construction of solutions of \eqref{eq:BO}. This fact allows us to prove
that for any $-1/2<s<0$,  \eqref{eq:BO} is globally $C^0$-well-posed on $H^{s}_{r,0}$ 
(\cite[Theorem 1]{GKT1}) improving in this way the previously known well-posedness results (see \cite{Mo,MoP}). 
This result is sharp by \cite[Theorem 2]{GKT1}. Additional applications of the Birkhoff map include the proof of 
the almost periodicity of the solutions of the Benjamin-Ono equation and the orbital stability of the Benjamin-Ono traveling waves
(see \cite[Theorem 3 and Theorem 4]{GKT1}).
Finally,  since the Hamiltonian \eqref{eq:BO-Hamiltonian} is well defined on $H^s_r$ for any $s\ge 1/2$,
it follows from \cite[Proposition 8.1]{GK} that for any $z$ in $\h^{\frac{1}{2}+s}_{r,0}$, $s \ge 1/2$,
\begin{equation}\label{eq:BO-Hamiltonian_in_BNF}
\mathcal{H}\circ\Phi^{-1}(z)=\sum_{n=1}^\infty n^2 I_n-\sum_{n=1}^\infty\Big(\sum_{k=n}^\infty I_k\Big)^2,
\end{equation}
where $I_n(z):=|z_n|^2/2$ are {\em action variables} for any $n\ge 1$ (cf. \eqref{eq:h_{r,0}}).
In addition, by \cite[Corollary 7.1]{GK}, the Birkhoff coordinate functions $\zeta_n(u)\equiv\Phi_n(u)$, $n\ge 1$, satisfy 
the (well defined) canonical relations
\begin{equation}\label{eq:canonical_relations}
\{\zeta_n,\zeta_m\}=0,\quad\{\zeta_n,\overline{\zeta}_m\}=-i\delta_{nm},\quad\forall n,m\ge 1,
\end{equation}
with respect to the Gardner bracket \eqref{eq:bracket} on $H^s_{r,0}$ for $s\ge 0$.

In the present paper we prove that for any $s>-1/2$ the Benjamin-Ono equation admits an analytic Birkhoff normal form
in an open neighborhood of zero in $H^{s}_{r,0}$ (see Theorem \ref{th:Phi} below). 
Let us first recall the notion of an analytic Birkhoff normal form in the case 
of a Hamiltonian system on $\R^{2N}$ with coordinates $\big\{(x_1,y_1,...,x_N,y_N)\big\}$ and canonical symplectic structure 
$\Omega=\sum_{n=1}^N dx_n\wedge dy_n$. Assume that the Hamiltonian $H\equiv H(x,y)$ is real analytic and
\begin{equation}\label{eq:H}
H(x,y)=\sum_{n=1}^N\omega_n\,\frac{x_n^2+y_n^2}{2}+O(r^3),\quad r:=\sqrt{|x|^2+|y|^2},
\end{equation}
where $O(r^3)$ stands for terms of order $\ge 3$ in the Taylor expansion of $H$ at zero and $\omega_n\in\R$, $1\le n\le N$.
Note that then the Hamiltonian vector field $X_H$ of $H$ has a singular point at zero and its linearization 
$d_0X_H : T_0\R^{2N}\to T_0\R^{2N}$ at zero has imaginary eigenvalues, $\spec d_0X_H =\{\pm i\omega_1,...,\pm i\omega_N\}$.
By definition, the Hamiltonian system corresponding to $X_H$ has {\em an analytic Birkhoff normal form} in an open neighborhood of zero if 
there exist open neighborhoods $U$ and $V$ of zero in $\R^{2N}$ and a real analytic canonical diffeomorphism 
\[
\mathcal{F} : U\to V,\quad(x_1,y_1,...,x_N,y_N)\mapsto(q_1,p_1,...,q_N,p_N),
\] 
such that the Hamiltonian $H\circ\mathcal{F}^{-1} : V\to\R$ has the form
\[
H\circ\mathcal{F}^{-1}(q_1,p_1,...q_N,p_N)=\mathcal{H}(I_1,...,I_N)\,\quad I_n:=\frac{p_n^2+q_n^2}{2},\quad 1\le n\le N.
\]
In this case, the quantities $I_1$,...,$I_N$ are Poisson commuting integrals and the Hamiltonian equations can be easily 
solved in terms of explicit formulas. A classical theorem of Birkhoff \cite{Birk} states that in the case when the coefficients 
$\omega_1,...,\omega_N$ in \eqref{eq:H} are rationally independent then $\mathcal{F} : U\to V$
can be constructed in terms of a {\em formal} power series. However, this formal power series generically diverges because
of small denominators. 
In the case when the Hamiltonian system is completely integrable in an open neighborhood of zero and the integrals satisfy 
a non-degeneracy condition at zero, then the Hamiltonian system admits an analytic Birkhoff normal form (Vey \cite{Vey}).
Vey's result was subsequently improved -- see \cite{Zung} and the references therein.
We remark that typically, the Hamiltonians of infinite dimensional Hamiltonian systems appearing in applications
such as the Benjamin-Ono equation, the Korteweg-de Vries equation, or the NLS equation, are only defined on
a continuously embedded proper subspace of the corresponding phase space. The following theorem can be considered as an
instance of a Hamiltonian system of infinite dimension that admits an analytic Birkhoff normal form in 
an open neighborhood of zero.


\begin{Th}\label{th:Phi}
For any $s>-1/2$ there exists an open neighborhood $U\equiv U^{s}$ of zero in $H^{s}_{c,0}$ such that
the Birkhoff map of the Benjamin-Ono equation  
\begin{equation}\label{eq:Birkhoff_map}
\Phi : H^{s}_{r,0}\to\h^{\frac{1}{2}+s}_{r,0},
\end{equation}
introduced in \cite{GK,GKT1}, extends to an analytic map $\Phi : U\to\h^{\frac{1}{2}+s}_{c,0}$.
The restriction $\Phi\big|_{U\cap H^{s}_{r,0}} : U\cap H^{s}_{r,0}\to\h^{\frac{1}{2}+s}_{r,0}$ is
a real analytic diffeomorphism onto its image. 
In particular, in view of \eqref{eq:BO-Hamiltonian_in_BNF} and \eqref{eq:canonical_relations}, 
for $s>-1/2$ the Benjamin-Ono equation \eqref{eq:BO} has an analytic Birkhoff normal form in the
open neighborhood $U\cap H^{s}_{r,0}$ of zero in $H^{s}_{r,0}$ such that the Hamiltonian $H\circ\Phi^{-1}$ 
given by \eqref{eq:BO-Hamiltonian_in_BNF} is well defined and real analytic on $\h^1_{r,0}$.
\end{Th}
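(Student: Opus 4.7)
The plan is to extend the Birkhoff map by complexifying the spectral data out of which it is built. Recall from \cite{GK,GKT1} that for real $u \in H^{s}_{r,0}$ the coordinate $\zeta_n(u) \equiv \Phi_n(u)$ is expressed in terms of the $n$-th eigenvalue $\lambda_n(u)$ and a suitably normalized eigenfunction $f_n(u)$ of the Lax operator $L_u$ acting on the Hardy space $L^2_+(\T,\C)$. The key observation is that $u \mapsto L_u$ is affine, hence entire, as an operator-valued map, and that at $u = 0$ the operator $L_0$ has simple purely discrete spectrum $\spec L_0 = \{n : n \ge 0\}$ with spectral gaps of size one. For any $u$ in a sufficiently small complex ball $U \equiv U^s \subset H^{s}_{c,0}$, I would apply the (non-self-adjoint) analytic perturbation theory of Kato to produce analytic branches $u \mapsto \lambda_n(u)$ of simple eigenvalues and analytic rank-one Riesz projectors $u \mapsto P_n(u)$, obtained by integrating the resolvent $(L_u - z)^{-1}$ along fixed circles enclosing the eigenvalues of $L_0$.

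With these analytic spectral families at hand, I would define $\Phi_n : U \to \C$ by the same explicit formula as in the real construction of \cite{GK} (a scalar quantity built from the projected reference vector $P_n(u) e_0$ or an analogous pairing); each $\Phi_n$ is then scalar-valued and analytic on $U$. The next, quantitative step is to show, uniformly on $U$, that $\sum_{n \ge 1} \1 n \2^{1+2s} |\Phi_n(u)|^2 < \infty$. This reduces to sharp resolvent estimates for $L_u - L_0$ as a relatively bounded (form) perturbation, together with large-$n$ asymptotics of $\lambda_n(u) - n$ and $P_n(u) - P_n(0)$, from which one concludes that $\Phi : U \to \h^{\frac{1}{2}+s}_{c,0}$ is a bounded analytic map.

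To establish invertibility, I would compute the differential $d_0 \Phi : H^{s}_{c,0} \to \h^{\frac{1}{2}+s}_{c,0}$ by first-order perturbation: it should coincide with the (suitably reindexed) Fourier transform restricted to nonzero modes and thus be a Banach-space isomorphism. The analytic inverse function theorem then yields an analytic local diffeomorphism on a possibly shrunken $U$. Restricting to $u \in U \cap H^{s}_{r,0}$, where $L_u$ is self-adjoint, the spectral data are real and $\Phi$ lands in $\h^{\frac{1}{2}+s}_{r,0}$; reality of the inverse follows from the reality of the map, yielding a real analytic local diffeomorphism, which must agree with the restriction of the global homeomorphism of \cite{GKT1}.

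The hard part is the low-regularity regime $-1/2 < s < 0$, where $u$ is only a distribution and the multiplicative part of $L_u$ must be realized via quadratic forms or Toeplitz/commutator constructions on $L^2_+$; establishing the analytic dependence of $(L_u - z)^{-1}$ as an operator-valued map and obtaining uniform large-$n$ resolvent bounds in this setting is the technical heart of the argument. Moreover, for complex $u$ the operator $L_u$ is non-self-adjoint, so the Riesz projectors $P_n(u)$ are non-orthogonal, and controlling their norms uniformly over $U$ requires quantitative spectral gap estimates that persist under complexification. Once these analytic and quantitative properties of the family $\{L_u\}_{u \in U}$ are in place, the remaining steps -- pointwise analyticity of the $\Phi_n$, the weighted $\ell^2$-decay, and the inverse function theorem -- follow along standard lines.
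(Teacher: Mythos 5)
Your overall architecture --- analytic resolvent and Riesz projectors $P_n(u)$ for the non-self-adjoint complexified Lax operator, scalar analytic components, a weighted $\ell^2$ bound, computation of $d_0\Phi$ as a weighted Fourier transform, and the analytic inverse function theorem --- is exactly the skeleton of the paper's proof. But there is a genuine gap at the step you compress into ``define $\Phi_n$ by the same explicit formula as in the real construction.'' The formula is $\Phi_n(u)=\langle 1|f_n(u)\rangle/\sqrt[+]{\kappa_n(u)}$, where $f_n(u)$ is \emph{not} canonically attached to the spectral projector: it is fixed by the \emph{inductive} normalization $\|f_n\|=1$, $\langle e^{ix}f_{n-1}|f_n\rangle>0$. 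Neither condition extends analytically to complex $u$ as written. The paper's resolution is to replace $f_n$ by the analytically defined vector $h_n(u)=P_n(u)e^{inx}$ and to write $f_n=a_nh_n$ with $a_n(u)=a_0(u)\prod_{k=1}^n\nu_k(u)/\sqrt[+]{\mu_k(u)}$. The whole difficulty is then to show that these products are bounded \emph{uniformly in $n$} on a complex neighborhood of $0$; this is equivalent to an $\ell^1$ bound on $\delta_n(u)=\langle P_nSP_{n-1}e_{n-1}|e_n\rangle-\langle P_ne_n|e_n\rangle$, and it is obtained only because of a nontrivial combinatorial identity (the Vanishing Lemma, Lemma \ref{lem:D=0}) which kills the non-summable ``diagonal'' part of the Neumann-series expansion of $\delta_n$. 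Your proposal contains nothing that would produce this cancellation; generic resolvent estimates and large-$n$ asymptotics of $\lambda_n(u)-n$ and $P_n(u)-P_n(0)$ give $\nu_n(u)=1+O(\|u\|^2)$ termwise but not the summability in $n$ needed to control $\prod_{k\le n}\nu_k$ as $n\to\infty$, and without that the map $u\mapsto(a_n(u))_{n\ge1}\in\ell^\infty_+$ --- hence $\Phi$ itself --- is not under control.

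Two further points you pass over. First, the pairing $\langle 1|f_n(u)\rangle$ is \emph{anti}-holomorphic in $f_n$, so the naive complexification of the defining formula is not analytic; the paper must rewrite $\Phi_n$ using the conjugation $F^*(u)=\overline{F(\bar u)}$ and the symmetries $\lambda_n(u)=\overline{\lambda_n(\bar u)}$, $h_n^-(u)=\overline{h_n(\bar u)}$ of the Lax operator (Appendix \ref{Appendix symmetries}) before extending. Second, even the ``bounded analytic map into $\h^{1+s}_+$'' step for the pre-Birkhoff components $\langle 1,h_n(u)\rangle$ is not a soft consequence of relative boundedness: it requires the explicit multilinear estimates on the Neumann series (Proposition \ref{prop:Psi} and Lemma \ref{new version map Q}), since for $-1/2<s<0$ one needs the gain of a full derivative from $P_n$ to land in $\h^{1+s}_+$ with norm uniform over the complex neighborhood. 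The differential computation and the inverse function theorem at the end are as you describe.
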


\begin{Rem}
Since, prior to this work, the Benjamin-Ono equation was not known to be integrable in terms of
an appropriate infinite dimensional real analytic setup, Theorem \ref{th:Phi} should {\em not} be viewed 
as an instance of Vey's result for the infinite dimensional Hamiltonian system \eqref{eq:BO}.
For the same reason, the techniques developed by Kuksin $\&$Perelman in \cite{KukPer} do not apply in this case.
\end{Rem}

We apply Theorem \ref{th:Phi} to further analyze regularity properties of the solution map
of the Benjamin-Ono equation. Assume that $s>-1/2$.
For $u_0 \in H^{s}_{r, 0}$, denote by  $t \mapsto u(t) \equiv u(t, u_0)$ the solution of the Benjamin-Ono equation \eqref{eq:BO} 
with initial data $u_0$, constructed in \cite{GKT1}. 
For given $t\in\R$ and $T>0$ consider the {\em flow map}
\[
\mathcal{S}_0^t : H^{s}_{r, 0}\to H^{s}_{r, 0},\quad u_0\mapsto u(t, u_0),
\]
as well as the {\em solution map}
\[
\mathcal{S}_{0,T} : H^{s}_{r, 0}\to C\big([-T, T], H^{s}_{r, 0}\big),\quad u_0\mapsto u|_{[-T,T]},
\]
of the Benjamin-Ono equation. To state our results, we need to introduce one more definition.  
A continuous map $F : X\to Y$ between two Banach spaces $X$ and $Y$ is called 
{\em nowhere locally uniformly continuous} in an open neighborhood $U$ in $X$ if
the restriction $F\big|_V : V\to Y$ of $F$ to any open neighborhood $V\subseteq U$ is {\em not} uniformly continuous. 
In a similar way one defines the notion of a {\em nowhere locally Lipschitz} map in an open neighborhood $U\subseteq X$.

\begin{Th}\label{th:well-posedness}
\begin{itemize}
\item[(i)] For any $-1/2<s<0$ and $t\ne 0$, the flow map $\mathcal{S}_0^t : H^{s}_{r,0}\to H^{s}_{r,0}$
of the Benjamin-Ono equation \eqref{eq:BO} is nowhere locally uniformly continuous in an open neighborhood $U$
of zero in $H^{s}_{r,0}$. In particular, $\mathcal{S}_0^t : H^{s}_{r,0}\to H^{s}_{r,0}$ is nowhere locally
Lipschitz in $U$.
\item[(ii)] For any $s\ge 0$ there exists an open neighborhood $U\equiv U^s$ of zero in $H^s_{c,0}$ so that
for any $T>0$, the solution map $\mathcal{S}_{0,T} : U\cap H^s_{r, 0}\to C\big([-T, T], H^s_{r, 0}\big)$ is real analytic.
\end{itemize}
\end{Th}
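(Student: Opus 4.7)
The plan is to pull the Benjamin--Ono dynamics back to Birkhoff coordinates via Theorem \ref{th:Phi}, where the flow becomes the explicit phase rotation
\[
\Psi^t(z) := \bigl( e^{i \omega_n(z)\, t}\, z_n \bigr)_{n \in \Z}, \qquad
\omega_n(z) := n^2 - 2 \sum_{k=1}^{\infty} \min(k,n)\, I_k(z),
\]
with $I_k(z) := z_k z_{-k}/2$ the holomorphic extension of the $k$-th action, formally obtained by differentiating \eqref{eq:BO-Hamiltonian_in_BNF}. Since $\Phi$ is a real analytic diffeomorphism on $U^s \cap H^s_{r,0}$ by Theorem \ref{th:Phi}, both assertions reduce to the corresponding statements about $\Psi^t$ on $\Phi(U^s \cap H^s_{r,0}) \subset \h^{\frac{1}{2}+s}_{r,0}$.

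For (ii), the key step is a uniform-in-$n$ bound on the frequency corrections: for $s \geq 0$ and $z$ in a small complex ball in $\h^{\frac{1}{2}+s}_{c,0}$,
\[
\bigl|\omega_n(z) - n^2\bigr|
\leq \sum_{k \geq 1} \min(k,n)\, |z_k z_{-k}|
\leq \tfrac{1}{2} \|z\|_{\h^{1/2}}^2
\leq \tfrac{1}{2} \|z\|_{\h^{\frac{1}{2}+s}}^2.
\]
Expanding
\[
e^{i\omega_n(z)\, t} = e^{i n^2 t} \sum_{m=0}^{\infty} \frac{(-2 i t)^m}{m!} \Bigl( \sum_{k \geq 1} \min(k,n)\, I_k(z) \Bigr)^{\!m}
\]
then yields a norm-convergent power series representation of $\Psi^t$ as an analytic map from a small ball in $\h^{\frac{1}{2}+s}_{c,0}$ to itself, jointly analytic in $(t,z)$ for $t$ in any bounded complex set. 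Composing with $\Phi$ and $\Phi^{-1}$ gives analyticity of $\mathcal{S}_0^t$, and uniform convergence in $t \in [-T,T]$ promotes this to analyticity of $\mathcal{S}_{0,T} : U^s \cap H^s_{r,0} \to C([-T,T], H^s_{r,0})$.

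For (i), fix an open set $V \subseteq U \cap H^s_{r,0}$ with a point $u_* \in V$, and set $z_* := \Phi(u_*)$. For each large integer $n_k$, let $z_k^{(j)}$ ($j=1,2$) be the real elements of $\h^{\frac{1}{2}+s}_{r,0}$ that agree with $z_*$ outside indices $\pm n_k$ and satisfy $(z_k^{(j)})_{\pm n_k} = z_{*,\pm n_k} + a_k^{(j)}$ with $a_k^{(j)} \in \R$. Choose $a_k^{(1)} := \epsilon n_k^{-\frac{1}{2}-s}/\sqrt{2}$ so that the perturbation has $\h^{\frac{1}{2}+s}$-norm $\epsilon$, and set $a_k^{(2)} := a_k^{(1)} + \delta_k$ with $\delta_k > 0$. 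A direct computation gives
\[
\omega_{n_k}(z_k^{(1)}) - \omega_{n_k}(z_k^{(2)}) = 2 n_k\, \delta_k\, \bigl(a_k^{(1)} + \re z_{*,n_k}\bigr) + O(n_k\, \delta_k^2),
\]
and since $|z_{*,n_k}| = o(n_k^{-\frac{1}{2}-s})$ as $n_k \to \infty$, the dominant term is $2 n_k \delta_k\, a_k^{(1)}$. Choosing $\delta_k$ so that this shift times $t$ is $\pi$ forces $\delta_k \asymp n_k^{s - \frac{1}{2}}/(\epsilon |t|)$. Consequently $\|z_k^{(1)} - z_k^{(2)}\|_{\h^{\frac{1}{2}+s}}^2 \asymp n_k^{4s} \to 0$ because $s < 0$, while the $\pm n_k$ coordinates of $\Psi^t z_k^{(1)} - \Psi^t z_k^{(2)}$ have modulus bounded below by $c\, a_k^{(1)}$, giving
\[
\|\Psi^t z_k^{(1)} - \Psi^t z_k^{(2)}\|_{\h^{\frac{1}{2}+s}} \geq c\, \epsilon.
\]
Because $\Phi^{-1}$ is bi-Lipschitz on compact subsets of $\Phi(V)$, this transfers to an analogous lower bound in $H^s_{r,0}$, contradicting uniform continuity of $\mathcal{S}_0^t$ on $V$. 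The nowhere locally Lipschitz assertion follows as an immediate consequence.

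The main obstacle in (ii) is the joint analyticity: the uniform-in-$n$ estimate $|\omega_n - n^2| = O(\|z\|^2)$ is exactly what makes the power series for $\Psi^t$ converge in $\h^{\frac{1}{2}+s}_{c,0}$; it relies crucially on the quadratic-in-actions structure of \eqref{eq:BO-Hamiltonian_in_BNF} and fails for $s<0$. A minor subtlety in (i) is that $\omega_{n_k}(z_*)$ need not be finite when $s<0$, but only the finite-rank difference $\omega_{n_k}(z_k^{(1)}) - \omega_{n_k}(z_k^{(2)})$ enters the argument, and it is always finite by the computation above.
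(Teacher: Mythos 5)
Your proposal is correct and follows the same overall strategy as the paper: conjugate by the Birkhoff map of Theorem \ref{th:Phi} and prove both statements for the explicit phase flow $\mathcal S^t_B$ in Birkhoff coordinates, using for (ii) the uniform bound $|\omega_n(z)-n^2|\le \|z\|_{\h^{1/2}}^2\lesssim\|z\|_{\h^{1/2+s}}^2$ (valid only for $s\ge 0$) together with analyticity of the exponential, and for (i) the fact that a single mode of size $\sim n^{-1/2-s}$ shifts $\omega_n$ by $\sim n^{-2s}$, which is unbounded for $s<0$.

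The only substantive difference is in the bookkeeping for (i): the paper takes a base point with finitely many nonzero modes, compares a modulus perturbation with a perturbation carrying an extra imaginary part of relative size $m^{s/2}$, keeps $\delta$ fixed, and extracts a subsequence $m_j$ along which $(k/m_j)^s$ is within $1/2$ of an odd integer so that the phase discrepancy stays away from $2\pi\Z$; you instead work at an arbitrary base point (using $|z_{*,n_k}|=o(n_k^{-1/2-s})$) and tune $\delta_k$ for each mode so that the phase difference is exactly $\pi$, which avoids the subsequence extraction. Both work. Two small imprecisions in your write-up: (a) the transfer back to $H^s_{r,0}$ should not invoke that $\Phi^{-1}$ is ``bi-Lipschitz on compact subsets'' --- the sequences $z^{(j)}_k$ and $\Psi^t z^{(j)}_k$ are bounded but not precompact in $\h^{\frac12+s}_{r,0}$; what you actually need (and what holds, by continuity of $d\Phi$ and $d\Phi^{-1}$ near the relevant points) is that $\Phi^{\pm1}$ are Lipschitz on small balls containing these sequences; (b) in (ii), composing the analytic curve $t\mapsto\mathcal S^t_B(\Phi(u))$ with $\Phi^{-1}$ inside $C([-T,T],\cdot)$ uses that an analytic map induces an analytic push-forward on spaces of continuous curves --- the paper isolates this as a separate lemma (Lemma \ref{lem:push-forward}), proved by a Cauchy-integral argument, and it deserves at least a mention. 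Also note that $\omega_n(z)$ is in fact finite on all of $\h^{\frac12+s}_{c,0}$ for $s>-1/2$ (the tail $n\sum_{k>n}|z_k|^2$ converges since $z\in\ell^2$), so the caveat at the end of your (i) is unnecessary.
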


\noindent{\bf Addendum to Theorem \ref{th:well-posedness}(ii).}\label{rem:improved_well-posedness}
{\em 
For any $k\ge 1$, $s>-1/2+2k$, and $T > 0$, the solution map
\[
\mathcal{S}_{0,T} : H^s_{r, 0}\to\bigcap_{j=0}^k C^j\big([-T, T], H^{s-2j}_{r, 0}\big)
\]
is well defined and real analytic in an open neighborhood of zero in $H^{s}_{r,0}$. 
}

\begin{Rem}
Item (i) of Theorem \ref{th:well-posedness} improves on the  result by Molinet in \cite[Theorem 1.2]{Mo},
saying that for any $s<0$, $t\in\R$, the flow map $\mathcal{S}^t_0$ (if it exists at all) is not of class 
$C^{1, \alpha}$ for any $\alpha>0$.
Item (ii)  of Theorem \ref{th:well-posedness} improves on the result by Molinet, saying that for any $s\ge 0$, $t\in\R$, 
the flow map $\mathcal S^t_0$ is real analytic near zero (\cite[Theorem 1.2]{Mo}).
\end{Rem}

\begin{Rem}
Any solution $u$ of the Benjamin-Ono equation in $H^s_{r, 0}$, $s> -1/2$, constructed in \cite{GKT1}, has the property 
that for any $c\in\R$, $u(t, x - 2 ct) + c$ is again a solution with constant mean value $c$. 
It is straightforward to see that for any $c\in\mathbb R$, Theorem \ref{th:well-posedness} holds on the affine space 
$\big\{u\in H^s_r \,\big| \hu(0)=c\big\}$. 
\end{Rem}

\begin{Rem}
Note that in Theorem \ref{th:well-posedness} we restrict our attention to solutions with mean value zero. 
In case when the mean value of solutions is not prescribed, the flow map is no longer locally uniformly continuous 
on $H^s_r$ with $s\ge 0$ as can be seen by considering families of solutions $c+u(t,x-2ct)$ of the Benjamin-Ono equation, 
parametrized by $c\in \R$, where $c$ tends to zero and $u$ is oscillating. 
\footnote{This fact can also be proved in a very transparent way using Birkhoff coordinates (cf. \cite[Appendix A]{KT1}).}
Based on this observation, Koch $\&$Tzvetkov succeeded in proving that the solution map of 
the Benjamin-Ono equation on the line is not uniformly continuous on bounded subsets of initial data in 
$H^s(\R,\R)$ with $s>0$ (cf. \cite{KoTz}).
We point out that Theorem \ref{th:well-posedness}(i) does {\em not} follow from the lack of uniform continuity on 
bounded subsets of initial data of the solution map of the Benjamin-Ono equation on $H^s_r$, $s > -1/2$, 
and its proof requires new arguments.
We remark that the Birkhoff coordinates allow to explain in clear terms why on any neighborhood of
zero in $H^s_{r,0}$ with $-1/2 < s < 0$, the solution map of the Benjamin-Ono equation is nowhere 
locally uniformly continuous: the reason is that on these neighborhoods, the frequencies $\omega_n$ 
of the Benjamin-Ono equation are {\em not} locally Lipschitz continuous uniformly in $n\ge 1$.
It follows from the analyticity of the Birkhoff map on all of $H^s_{r,0}$, established in \cite{GKT3}, 
that these properties of the solution map of the Benjamin-Ono equation actually hold on the 
corresponding entire Sobolev spaces. 
\end{Rem}

\medskip

\noindent{\em Ideas of the proofs.} 
The Birkhoff map $\Phi : H^{s}_{r,0}\to\h^{\frac{1}{2}+s}_{r,0}$, $u \mapsto (\Phi_n(u))_{|n|\ge 1}$,
is defined in terms of the Lax operator $L_u = -i\partial_x - T_u$ where $T_u$ denotes
the Toeplitz operator with potential $u\in  H^{s}_{r,0}$, $s > -1/2$. 
We refer to Section \ref{Lax operator} for a review of  terminology and results concerning this operator,
established in the previous papers \cite{GK}, \cite{GKT1}, and \cite{GKT2}.
At this point we only mention that the spectrum of $L_u$ is discrete and consists of a sequence of simple, real eigenvalues,
bounded from below, $\lambda_0(u) < \lambda_1(u) < \cdots$. Furthermore, there exist $L^2-$normalized 
eigenfunctions $f_n(u)$ of $L_u$, corresponding to the eigenvalues $\lambda_n(u)$, which are uniquely determined
by the normalization conditions
\begin{equation}\label{norm'zation f_n}
\1 f_0(u) | 1  \2 > 0, \qquad \1 e^{ix} f_{n-1}(u) | f_n(u) \2 > 0, \quad \forall n \ge 1 .
\end{equation}
The components of the Birkhoff map $\Phi$ are then defined as 
\begin{equation}
\Phi_n(u) =\frac{ \1 1| f_n(u)  \2}{ \sqrt[+]{\kappa_n(u)}} , \qquad \forall n \ge 1,
\end{equation}
where $\kappa_n(u) > 0$ are scaling factors (cf. Section \ref{sec:the_Birkhof_map}).
We point out that the normalization conditions  \eqref{norm'zation f_n} are defined inductively.
It is this fact which makes it more difficult to prove that $\Phi$ extends to an analytic map.

For $u = 0$, one has
\[
\lambda_n(0) = n , \quad f_n(0) = e^{inx} , \quad \kappa_n(0) = 1 , \qquad  \forall n \ge 0 .
\]
We then use perturbation arguments to show that for any $s > -1/2$, $\Phi$ extends to an analytic map on 
a neighborhood of zero in $H^s_{c,0}$. Let us outline the main steps  of the proof of this result.

In \cite{GKT2} we proved that there exists a neighborhood $U\equiv U^s$ of zero in $H^s_{c,0}$
so that for any $u\in U$, the spectrum of $L_u$ consists of simple eigenvalues $\lambda_n(u)$, $n \ge 0$.
These eigenvalues are analytic maps on $U$ and so are the Riesz projectors $P_n(u)$
onto the one dimensional eigenspaces, corresponding to these eigenvalues.
See Proposition \ref{prop:L-global} and Proposition \ref{prop:L-local} in Section \ref{Lax operator} for a review of these results.
It then follows that for any $n \ge 0$, $h_n(u) = P_n(u) e^{inx}$  is an analytic function $U \to H^{1+s}_+$ 
(see \eqref{eq:H_+} below).

In Section \ref{sec:Psi} we introduce the pre-Birkhoff map 
$$
\Psi(u) = (\Psi_n(u))_{n \ge 1}, \qquad 
\Psi_n(u) = \1 h_n(u) | 1 \2 , \quad \forall n \ge 1.
$$
With the help of the Taylor expansion of $P_n(u)$, $n \ge 1$, we show that for any $u \in U$, $\Psi(u) \in\h^{1+s}_+$
and that $\Psi : U \to\h^{1+s}_+$ is analytic (see \eqref{def frak h_+} below).  The arguments developed allow to prove that
these results hold for any $s > -1/2$, not just for $-1/2 < s \le 0$.

In Section \ref{sec:the_Birkhof_map}, we relate the Birkhoff map $\Phi$ to the pre-Birkhoff map $\Psi$
by using in addition to $\kappa_n(u)$, $n \ge 0$, the scaling factors $\mu_n(u)$, $n \ge 1$, introduced in \cite{GK} and further analyzed 
in \cite{GKT2}. The main ingredient for proving that $\Phi$ extends to an analytic map on a neighborhood of zero in $H^s_{c, 0}$
is a novel {\em Vanishing Lemma} (cf. Lemma \ref{lem:D=0}), which is discussed and proved in Section  \ref{sec:the_delta_map}.

Theorem  \ref{th:Phi} and Theorem \ref{th:well-posedness} are proved in Section \ref{proof main results}.
To prove Theorem \ref{th:well-posedness} we use arguments developed in \cite{KM1} 
to show a corresponding result for the Korteweg-de Vries equation.

\medskip

\noindent{\em Related work.} 
In \cite{GKT3} we prove an extension of Theorem \ref{th:Phi}, saying that for any $s > -1/2$, 
the Birkhoff map $\Phi : H^{s}_{r,0}\to\h^{\frac{1}{2}+s}_{r,0}$ is a real analytic diffeomorphism.
The proof of this result uses, in broad terms, the same strategy developed to prove Theorem \ref{th:Phi}, but 
it is more technical, relying  on the approximation of arbitrary elements in $H^{s}_{r,0}$ by finite gap potentials
and on properties of the spectrum of the Lax operator, associated to such potentials.
As in the proof of Theorem \ref{th:Phi}, one of the main ingredients of the proof of the analytic extension is
Lemma \ref{lem:D=0} (Vanishing Lemma) of Section \ref{sec:the_delta_map}.

The result saying that  the Birkhoff map $\Phi : H^{s}_{r,0}\to\h^{\frac{1}{2}+s}_{r,0}$ is a real analytic diffeomorphism
for the appropriate range of $s$, shows that similarly as for the Korteweg-de Vries (KdV) equation (cf. \cite{KP-book}, \cite{KT1}) 
and the defocusing nonlinear Schr\"odinger (NLS) equation (cf. \cite{GK-book}), the Benjamin-Ono equation on the torus is 
integrable in the strongest possible sense. We point out that the proof of the analyticity of the Birkhoff map in the case of 
the Benjamin-Ono equation significantly differs from the one in the case of the KdV and NLS equations due to the fact 
the Benjamin-Ono equation is not a differential equation.

The above result on the Birkhoff map $\Phi$ can be applied to prove in a straightforward way that Theorem \ref{th:well-posedness} 
extends to all of $H^{s}_{r,0}$. We remark that a result of this type has been first proved for the KdV equation in 
\cite[Theorem 3.10]{KM1}. It turns out that the analysis of the solution map of the KdV equation, expressed in Birkhoff coordinates, 
can also be used to prove Theorem  \ref{th:well-posedness} and its extension.

Similarly as in the case of the KdV equation (cf. \cite{KP-book}, \cite{Kuk}) and the NLS equation (cf. \cite{BKM} and references therein),  
a major application of the result, saying that $\Phi : H^{s}_{r,0}\to\h^{\frac{1}{2}+s}_{r,0}$ is a real analytic diffeomorphism for 
any $s>-1/2$, concerns its use to study (Hamiltonian) perturbations of the Benjamin-Ono equation by KAM methods near finite gap 
solutions of arbitrary large amplitude. 

\medskip

\noindent{\em Notation.} In this paragraph we summarize the most frequently used notations in the paper. 
For any $\beta\in\R$, $H^\beta_c$ denotes the Sobolev space $H^\beta\big(\T,\C\big)$
of complex valued functions on the torus $\T = \R/2\pi\Z$ with regularity exponent $\beta$. 
The norm in $H^\beta_c$ is given by 
\[
\|u\|_\beta:=\Big(\sum_{n\in\Z}\1 n\2^{2\beta}|\hu(n)|^2\Big)^{1/2}
\]
where $\hu(n)$, $n\in\Z$, are the Fourier coefficients of $u\in H^\beta_c$.
For $\beta=0$ and $u\in H^0_c\equiv L^2(\T,\C)$ we set $\|u\|\equiv \|u\|_0$.
By $H^\beta_{c,0}$ we denote the complex subspace in $H^\beta_c$ of functions with mean value zero,
\[
H^\beta_{c,0}=\big\{u\in H^\beta_c\,\big|\,\hu(0)=0 \big\}.
\]
For $f\in H^\beta_c$ and $g\in H^{-\beta}_c$, define the sesquilinear and bilinear pairing,
\[
\1 f|g\2:=\sum_{n\in\Z}\widehat{f}(n)\overline{\widehat{g}(n)} ,
\qquad \quad
\1 f , g\2:=\sum_{n\in\Z}\widehat{f}(n)\widehat{g}(-n).
\]
The positive Hardy space $H^\beta_+$ with regularity exponent $\beta\in\R$ 
is defined as
\begin{equation}\label{eq:H_+}
H^{\beta}_+:=\big\{f\in H^{s}_c\,\big|\,\widehat{f}(n)=0\,\,\,\forall n<0\big\} .
\end{equation}
For $1\le p<\infty$ denote by
$
\ell^p_+\equiv\ell^p\big(\Z_{\ge 1},\C\big)
$
the Banach space of complex valued sequences $ z=(z_n)_{n\ge 1} $ with finite norm
\[
\| z \|_{\ell^p_+} :=\Big(\sum_{n\ge 1}|z_n|^p\Big)^{1/p}<\infty.
\]
Similarly, denote by $\ell^\infty_+\equiv\ell^\infty\big(\Z_{\ge 1},\C\big)$ the Banach space of  
complex valued sequences with finite supremum  norm $ \| z\|_{\ell^\infty_+} :=\sup_{n\ge 1}|z_n|$. 
More generally, for $1\le p\le\infty$ and $m\in\Z$, we introduce the Banach space
\[
\ell^p_{\ge m}\equiv\ell^p\big(\Z_{\ge m},\C\big),\qquad \Z_{\ge m}:=\{n\in\Z\,| \, n\ge m\},
\]
as well as the space $\ell^p_c\equiv\ell^p\big(\Z,\C\big)$, defined in a similar way.
Furthermore, denote by $\h^\beta_+$, $\beta\in\R$, the Hilbert space 
of complex valued sequences $ z = (z_n)_{n \ge 1}$ with 
\begin{equation}\label{def frak h_+}
\|  z\|_{\h^\beta_+} :=\Big(\sum_{n\ge 1}|n|^{2\beta}|z_n|^2\Big)^{1/2}<\infty.
\end{equation}
In a similar way, we define the space of complex valued sequences $\h^\beta_{\ge m}$
for any $m\in\Z$ and $\beta \in \R$.  
Finally, for $z\in\C\setminus(-\infty,0]$, 
we denote by $\sqrt[+]{z}$ the principal branch of the  square root of $z$, defined by
$\re\big(\sqrt[+]{z}\big)>0$.

\section{The Lax Operator}\label{Lax operator}
In this section we review results from \cite{GKT2} on the Lax operator $L_u$ of the Benjamin-Ono equation 
with potentials $u$ in $H^{s}_c$ with $s > -1/2$, used in this paper. 
Throughout this section we assume that  $-1/2 < s \le 0$.

\smallskip

For $u\in H^{s}_c$ with $-1/2 < s \le 0,$ consider the pseudo-differential expression
\begin{equation}\label{eq:L}
L_u:=D-T_u
\end{equation}
where $D = -i\partial_x$,  $T_u: H^{1+s}_+\to H^{s}_+$ is the Toeplitz operator with potential $u$, 
\begin{equation*}
T_u f:=\Pi(u f),\quad f\in H^{1+s}_+,
\end{equation*}
and $\Pi\equiv\Pi^+ : H^{s}_c \to H^{s}_+$ is the Szeg\H o projector
\[
\Pi : H^{s}_c\to H^{s}_+,\quad \sum_{n\in\Z}\widehat v(n) e^{i n x}\mapsto\sum_{n\ge 0}\widehat v(n) e^{i n x},
\]
onto the (positive) Hardy space $H^{s}_+$, introduced in \eqref{eq:H_+}.
Note that when restricted to $H^{1+s}_+$, $D$ coincides with the Fourier multiplier $|\partial_x|$,
\[
|\partial_x| : H^{1+s}_c\to H^{s}_c,\quad\sum_{n\in\Z}\hat{f}(n)\mapsto\sum_{n\in\Z}|n|\hat{f}(n),
\]
It follows from Lemma 1 in \cite{GKT2} that $L_u$ defines an operator in $H^{s}_+$ with domain
$H^{1+s}_+$ so that the map $L_u : H^{1+s}_+\to H^{s}_+$ is bounded.
The following result follows from \cite[Theorem 1]{GKT2} and \cite{GKT1}.

\begin{Prop}\label{prop:L-global}
For any $-1/2 < s \le 0$, there exists an open neighborhood $W\equiv W^{s}$ of $H^{s}_{r,0}$ in $H^{s}_{c,0}$ so that 
for any $u\in W$ the operator $L_u$ is a closed operator in $H^{s}_+$ with domain $H^{1+s}_+$. 
The operator has a compact resolvent and all its eigenvalues are  {\em simple}. When appropriately listed,  
$\lambda_n \equiv \lambda_n(u)$, $n\ge 0$, 
satisfy $\re(\lambda_n)<\re(\lambda_{n+1})$ for any $n \ge 0$ and $|\lambda_n-n|\to 0$ as $n\to\infty$. 
For $u\in H^{s}_{r,0}$ the eigenvalues are real valued and 
$\gamma_n(u):= \lambda_n(u)-\lambda_{n-1}(u)-1\ge 0$ for $n\ge 1$.
\end{Prop}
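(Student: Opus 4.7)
The plan is to first establish the spectral picture for real-valued potentials $u \in H^s_{r,0}$ using the self-adjoint structure of $L_u$, and then extend to a complex neighborhood by analytic perturbation theory, with the uniform-in-$n$ bookkeeping supplied by the Lax resolvent analysis of \cite{GKT2}.

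For $u \in H^s_{r,0}$, the reality of $u$ makes the Toeplitz form symmetric in the sense that $\1 T_u f | g \2 = \1 f | T_u g \2$ on smooth elements of the Hardy space, and together with the self-adjointness of $D = -i\partial_x$ on $H^1_+$ and the fact that $T_u$ is a relatively compact perturbation of $D$ at the scale $H^{1+s}_+ \to H^s_+$ for $s > -1/2$, one obtains a closed densely defined operator with compact resolvent and real eigenvalues. The simplicity of every eigenvalue, the asymptotic $|\lambda_n(u) - n| \to 0$, and the gap inequality $\gamma_n(u) = \lambda_n(u) - \lambda_{n-1}(u) - 1 \ge 0$ are the content of \cite{GKT1}; they are derived from identities involving the shift operator $S : f \mapsto e^{ix} f$ on the Hardy space together with inverse-spectral information extracted from $L_u$.

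Next, for any fixed $u_0 \in H^s_{r,0}$, the gap inequality yields $\lambda_n(u_0) \ge \lambda_{n-1}(u_0) + 1$, so each $\lambda_n(u_0)$ can be enclosed in a disk $D_n \subset \C$ of radius $1/4$ with the $D_n$ pairwise disjoint, reducing for large $n$ to the standard disks $\{|z-n| \le 1/4\}$ by the asymptotic $|\lambda_n(u_0)-n|\to 0$. Since $u \mapsto L_u$ is $\C$-linear and bounded as a map $H^s_{c,0} \to \LL(H^{1+s}_+, H^s_+)$, Kato's analytic perturbation theory produces, for $u$ in a small $H^s_{c,0}$-ball around $u_0$, a single eigenvalue $\lambda_n(u) \in D_n$ that is simple and analytic, together with an analytic rank-one Riesz projector $P_n(u)$. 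Strict ordering of the real parts is preserved by continuity. The open neighborhood $W$ of $H^s_{r,0}$ is then obtained as the union of these balls as $u_0$ varies over $H^s_{r,0}$.

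The main obstacle, and the substance of Theorem 1 of \cite{GKT2}, is to guarantee that the perturbation argument applies to \emph{all} eigenvalues simultaneously for a ball of fixed size around each $u_0$. This requires the Neumann-series expansion
\[
(L_u - z)^{-1} = (D - z)^{-1}\bigl(I - T_u(D - z)^{-1}\bigr)^{-1}
\]
to converge on the contours $\{|z-n| = 1/2\}$ uniformly in $n$ for $u$ in a neighborhood of $H^s_{r,0}$ in $H^s_{c,0}$. This in turn reduces to showing that the operator norm of $T_u(D-z)^{-1}$ on $H^s_+$, with $z$ on these large circles, tends to $0$ as $n \to \infty$ uniformly for $u$ in bounded subsets. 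This estimate is the technically delicate point at the borderline regularity $s > -1/2$; once it is in place, the Riesz projector formalism simultaneously yields the openness of $W$, the analyticity and simplicity of the $\lambda_n(u)$, the ordering of real parts, and the uniform asymptotics $|\lambda_n(u)-n| \to 0$.
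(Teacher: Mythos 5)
The paper offers no proof of Proposition \ref{prop:L-global}: it is imported verbatim from \cite[Theorem 1]{GKT2} and \cite{GKT1}, so your sketch has to be judged as a reconstruction of those references. At the level of architecture it is faithful: the real case rests on the (form) self-adjointness of $L_u$ together with the inverse-spectral identities of \cite{GK,GKT1} (simplicity, $\gamma_n\ge 0$, $\lambda_n-n\to 0$), and the complex neighborhood comes from Kato perturbation theory made uniform in $n$ by controlling the Neumann series of $\big(I-T_u(D-\lambda)^{-1}\big)^{-1}$ on circles around the unperturbed eigenvalues. You also correctly isolate the uniformity in $n$ as the only genuinely delicate point.

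There is, however, one concrete misstep: the estimate you propose as the crux --- that $\big\|T_u(D-z)^{-1}\big\|_{\LL(H^s_+)}$ tends to $0$ as $n\to\infty$, uniformly for $u$ in bounded sets --- is false for $-1/2<s<0$, so the reduction in your last paragraph would not go through as written. Indeed, take $u=\1 N\2^{-s}(e^{iNx}+e^{-iNx})$ with $N=n$, so that $\|u\|_s=\sqrt2$; then for $|z-n|=1/2$ one has $T_u(D-z)^{-1}e_n=\1 n\2^{-s}(e_{2n}+e_0)/(n-z)$, whose $H^s_+$-norm is of size $\1 n\2^{-s}$ (the output component $e_0$ carries weight $1$), while $\|e_n\|_s=\1 n\2^{s}$; the quotient grows like $\1 n\2^{-2s}$. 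This is precisely why \cite{GKT2} (cf.\ Remark \ref{rem:shifted_norm}) works with the $n$-shifted norms $\|f\|_{s;n}=\big(\sum_{k\ge0}\1 n-k\2^{2s}|\widehat f(k)|^2\big)^{1/2}$, in which one obtains (and only needs) a bound $\le 1/2$ uniform in $n$ rather than decay; these norms are equivalent to $\|\cdot\|_s$ for each fixed $n$ but not uniformly, which is exactly where the work lies. A smaller point needing care: for $s<0$ the space $H^s_+$ is not contained in $L^2$, so ``self-adjointness of $D$ plus symmetry of $T_u$'' does not directly yield reality of the spectrum of the realization on $H^s_+$; one must use that eigenfunctions lie in $H^{1+s}_+$ with $1+s>1/2$, so that the form $\1 L_u f|f\2=\1 Df|f\2-\int u|f|^2$ is defined and real there. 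Neither issue is fatal --- both are resolved in \cite{GKT2} --- but they are the actual content hiding behind your reduction.
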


For $u$ in an open neighborhood of zero in $H^{s}_{c,0}$, $-1/2 < s \le 0$, the proposition above can be specified as follows
(see \cite[Corollary 2, Proposition 1]{GKT2}). For $\varrho>0$ consider the sets in $\C$,
\begin{equation}\label{eq:Vert}
\Wert_n(\varrho):=\big\{\lambda\in\C\,\big|\,|\lambda-n|\ge\varrho,\,|\re(\lambda)-n|\le 1/2\big\},\quad n\ge 0,
\end{equation}
\begin{equation}\label{def:D_n}
D_n(\varrho):=\big\{\lambda\in\C\,\big|\,|\lambda-n|<\varrho\big\},\quad n\ge 0,
\end{equation}
and let $\partial D_n(\varrho)$ be the counterclockwise oriented boundary of $D_n(\varrho)$ in $\C$.
Denote by $\Wert_n^0(\varrho)$ the interior of $\Wert_n(\varrho)$ 
in $\C$.

\begin{Prop}\label{prop:L-local}
For any $-1/2 < s \le 0$, there exists an open neighborhood $U\equiv U^{s}$ of zero in $H^{s}_{c,0}$ so that 
for any $u\in U$ the operator $L_u$ is a closed operator in $H^{s}_+$ with domain $H^{1+s}_+$. 
The operator has a compact resolvent and all its eigenvalues are simple. When listed appropriately, they satisfy
\[
\lambda_n\in D_n(1/4),\quad n\ge 0.
\]
Moreover, for any $n\ge 0$ the resolvent map 
\[
U\times\Wert_n^0(1/4)\to\LL\big(H^{s}_+,H^{1+s}_+\big),\quad(u,\lambda)\mapsto(L_u-\lambda)^{-1},
\]
is analytic. In addition, $(L_u-\lambda)^{-1}=(D-\lambda)^{-1}\big(I-T_u(D-\lambda)^{-1}\big)^{-1}$
and for any $n\ge 0$, the Neumann series 
\begin{equation}\label{eq:neumann_series}
\big(I-T_u(D-\lambda)^{-1}\big)^{-1}=\sum_{m\ge 0}[T_u(D-\lambda)^{-1}]^m
\end{equation}
converges in $\LL\big(H^{s}_+\big)$ absolutely and uniformly for $(u,\lambda)\in U\times\Wert_n^0(1/4)$.
\end{Prop}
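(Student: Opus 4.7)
The plan is to derive the proposition from an explicit Neumann series construction of the resolvent of $L_u$ on each strip $\Wert_n^0(1/4)$. I would proceed in three stages. First, observe that for $\lambda\in\Wert_n^0(1/4)$, the free resolvent $R(\lambda):=(D-\lambda)^{-1}$ is the Fourier multiplier $e^{ikx}\mapsto e^{ikx}/(k-\lambda)$ on $H^s_+$; since $|k-\lambda|\ge 1/4$ uniformly on $\Wert_n^0(1/4)$, we have $R(\lambda)\in\LL(H^s_+)$ with norm at most $4$, as well as $R(\lambda):H^s_+\to H^{1+s}_+$ bounded. The multiplicative estimate $\|uv\|_{H^s}\le C\|u\|_{H^s}\|v\|_{H^{1+s}}$, valid for $-1/2<s\le 0$ because $1+2s>0$, then gives $T_u R(\lambda)\in\LL(H^s_+)$ depending jointly analytically on $(u,\lambda)\in H^s_{c,0}\times\Wert_n^0(1/4)$.

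The central stage is to pick an open neighborhood $U$ of $0$ in $H^s_{c,0}$ so that for every $n\ge 0$ the Neumann series $\sum_{m\ge 0}(T_u R(\lambda))^m$ converges absolutely and uniformly on $U\times\Wert_n^0(1/4)$ in $\LL(H^s_+)$. Once this is achieved, one defines $(L_u-\lambda)^{-1}:=R(\lambda)(I-T_u R(\lambda))^{-1}\in\LL(H^s_+,H^{1+s}_+)$, verifies by direct algebraic manipulation that it is a two-sided inverse of the closed operator $L_u-\lambda:H^{1+s}_+\to H^s_+$, and reads off joint analyticity in $(u,\lambda)$ term by term from the Neumann series. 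In particular $\Wert_n^0(1/4)\subset\rho(L_u)$ for all $u\in U$, so combining with the standard large-$|\lambda|$ Neumann expansion of the resolvent (which rules out spectrum with $\re\lambda<-1/2$), one obtains $\spec L_u\subset\bigcup_{n\ge 0}D_n(1/4)$.

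For the final stage I would use the Riesz projector
\[
P_n(u):=-\frac{1}{2\pi i}\oint_{\partial D_n(1/4)}(L_u-\lambda)^{-1}\,d\lambda,
\]
which by the previous stage depends analytically on $u\in U$ and coincides at $u=0$ with the rank-one projector onto $\C e^{inx}$. Since the trace of a continuous family of finite-rank projections is constant on connected components, $P_n(u)$ remains a rank-one projection on a connected sub-neighborhood of $0$; hence each $D_n(1/4)$ contains exactly one eigenvalue $\lambda_n(u)$ of $L_u$ of algebraic -- hence geometric -- multiplicity one, recovering all the assertions.

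The hardest part is the uniform convergence claim in the central stage: the naive bound $\|T_u R(\lambda)\|_{\LL(H^s_+)}\le C\|u\|_{H^s}\cdot\|R(\lambda)\|_{\LL(H^s_+,H^{1+s}_+)}$ is insufficient, since the second factor grows like $\langle n\rangle$, whereas $U$ must be chosen independently of $n$. Salvaging uniformity requires exploiting both the vanishing diagonal of $T_u$ -- a consequence of $\widehat u(0)=0$, forcing each application of $T_u R(\lambda)$ to strictly shift Fourier frequencies -- and a Schur-type weighted estimate on the Fourier matrix kernel of the iterates $(T_u R(\lambda))^m$; the combined off-diagonal structure should produce the geometric decay in $m$ needed for uniform absolute convergence on $U\times\Wert_n^0(1/4)$.
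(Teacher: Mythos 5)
First, a point of reference: the paper does not actually prove Proposition \ref{prop:L-local}. Section \ref{Lax operator} is a review, and the proposition is quoted from \cite{GKT2} (Proposition 1 and Corollary 2 there). Your outline does follow the architecture of that argument — free resolvent on the strips $\Wert_n^0(1/4)$, Neumann series, Riesz projectors and constancy of the rank — and stages one and three are correct as sketched (two-sided inverse by algebra, compactness of the resolvent from $H^{1+s}_+\hookrightarrow H^s_+$, rank-one projectors by continuity of the trace on a connected neighborhood, simplicity since geometric multiplicity is bounded by algebraic multiplicity).

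The genuine gap is exactly where you flag it: the uniform-in-$n$ convergence of the Neumann series is the entire content of the proposition, and your proposal replaces it by the hope that the ``combined off-diagonal structure should produce the geometric decay in $m$''. Moreover, one of the two mechanisms you invoke — the vanishing diagonal of $T_u$ coming from $\widehat u(0)=0$ — cannot be what makes the estimate work: replacing $u$ by $u+c$ merely shifts $L_u$ by the constant $c$, so no resolvent bound can hinge on $\widehat u(0)=0$. What is actually used (cf.\ Remark \ref{rem:shifted_norm} and Lemma \ref{new version map Q}) is the lower bound $|k-\lambda|\ge \tfrac15(|k-n|+1)$, valid for all $k\ge 0$ and $\lambda\in\Wert_n(1/4)$, combined with the fact that the convolution-type operator with matrix entries $\widehat u(j-k)/(|k-n|+1)$ is bounded on $\ell^2$ equipped with the \emph{shifted} weight $\langle n-k\rangle^{s}$, with norm at most $C_s\|u\|_s$ \emph{independent of} $n$. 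Concretely, one replaces the norm of $H^s_+$ by the equivalent norm $\|f\|_{s;n}=\big(\sum_{k\ge0}\langle n-k\rangle^{2s}|\widehat f(k)|^2\big)^{1/2}$; in that norm $\|T_u(D-\lambda)^{-1}\|<1/2$ for $\|u\|_s$ small, uniformly in $(n,\lambda)\in\Z_{\ge 0}\times\Wert_n(1/4)$, and the Neumann series converges geometrically with a ratio independent of $n$. Your ``Schur-type weighted estimate'' is the right instinct, but you must name the weight — with the standard weight $\langle k\rangle^s$ the estimate degenerates as $n\to\infty$, precisely because $\|(D-\lambda)^{-1}\|_{\LL(H^s_+,H^{1+s}_+)}\sim\langle n\rangle$, as you yourself observed — and you must prove the resulting convolution bound, which is the content of Lemma \ref{new version map Q} (Lemma 1 of \cite{GKT2}). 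Until that estimate is stated and proved, the central stage of your plan is not yet a proof.
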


\begin{Rem}\label{rem:shifted_norm}
By \cite[Corollary 2]{GKT2} the neighborhood $U$ in Proposition \ref{prop:L-local} can 
be chosen so that for any $n\ge 0$,
$$
\big\|T_u(D-\lambda)^{-1}\big\|_{\LL(H^{s;n}_+)}<1/2, \qquad \forall  (u,\lambda)\in U\times\Wert_n(1/4) ,
$$
 where $H^{s;n}_+$ stands for the space $H^{s}_+$ equipped with the equivalent
``shifted'' norm $\|f\|_{s;n}:=\big(\sum_{k\ge 0}\1n-k\2^{2s}|\widehat{f}(k)|^2\big)^{1/2}$
(cf. \cite[Lemma 3]{GKT2}). In particular, for any $n\ge 0$ the series in \eqref{eq:neumann_series} converges 
uniformly on $U\times\Wert_n(1/4)$ with respect to an operator norm which is equivalent to the operator norm in 
$\LL\big(H^{s}_+\big)$. 
\end{Rem}

\begin{Rem}\label{rem:finite-gap}
For any $u\in H^{s}_{r,0}$, $-1/2 < s \le 0$, an analogue of Proposition \ref{prop:L-local} 
holds in an open neighborhood $U_u$ of $u$ in $H^{s}_{c}$
 -- see \cite[Section 5]{GKT2} for details.
\end{Rem}

\noindent Given any $ -1/2 < s \le 0$, Proposition \ref{prop:L-local} implies that  for any $u \in U$ and $ n\ge 0$, the Riesz projector
\begin{equation}\label{eq:P_n-local}
P_n(u):=
-\frac{1}{2\pi i}\oint_{\partial D_n}(L_u-\lambda)^{-1}\,d\lambda\in\LL\big(H^{s}_+,H^{1+s}_+\big),
\quad D_n := D_n(1/3), 
\end{equation}
is well defined and that the map
\begin{equation}\label{P_n anal near 0}
P_n : U\to\LL\big(H^{s}_+,H^{1+s}_+\big),\quad u\mapsto P_n(u).
\end{equation}
is analytic.


\section{Analytic extension of the pre-Birkhoff map}\label{sec:Psi}
In this section we introduce the pre-Birkhoff map and study its properties. 
Throughout this section, we assume that $s > -1/2$ and define
\begin{equation}\label{def sigma}
\sigma\equiv \sigma(s) := \min (s, 0).
\end{equation}

It follows from Proposition \ref{prop:L-local} (cf. \eqref{P_n anal near 0})
that for any $s>-1/2$, there exists 
an open neighborhood $U^\sigma$ of zero in $H^{\sigma}_{c,0}$ so that for any $u\in U^\sigma$ and $n\ge 0$
the Riesz projector
\begin{equation*}
P_n(u)=
-\frac{1}{2\pi i}\oint\limits_{\partial D_n}(L_u-\lambda)^{-1}\,d\lambda\in\LL\big(H^{\sigma}_+,H^{1+\sigma}_+\big),
\end{equation*}
is well defined and the map $U\to\LL\big(H^{\sigma}_+,H^{1+\sigma}_+\big)$, $u\mapsto P_n(u)$, is analytic. 
Here $\partial D_n$ is the counterclockwise oriented boundary of $D_n\equiv D_n(1/3)$ (see \eqref{def:D_n}).
Hence, for any $n\ge 0$ the map
\begin{equation}\label{eq:Psi_n-map}
U^\sigma \to H^{1+\sigma}_+,\quad u\mapsto h_n( u),
\end{equation}
is analytic, where
\begin{equation}\label{eq:h_n}
h_n( u):=P_n(u) e_n\in H^{1+\sigma}_+ ,\qquad e_n := e^{i n x} .
\end{equation}
The main objective in this section is to prove the following 

\begin{Prop}\label{prop:Psi}
For any $s>-1/2$ there exists an open neighborhood $ U^{s}$ of zero in $H^{s}_{c,0}$ so that
the following holds:
\begin{itemize}
\item[(i)] The map,
\begin{equation}\label{eq:Psi}
\Psi : U^s \to\h^{1+s}_+,\quad u\mapsto\big(\1 1,h_n(u)\2\big)_{n\ge 1},
\end{equation}
is analytic. We refer to $\Psi$ as the pre-Birkhoff map (near zero).
\item[(ii)] For any $n\ge 0$, the map $U^s \to\C$, $u\mapsto\1 h_n( u)|e_n\2$,
is analytic and 
\[
\big|\1 h_n( u)|e_n\2\big|\ge 1/2, \qquad \forall \, u \in U^s .
\] 
\end{itemize}
\end{Prop}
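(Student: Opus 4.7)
Analyticity of the two scalar-valued maps $u\mapsto\1 1,h_n(u)\2$ and $u\mapsto\1 h_n(u)|e_n\2$ for each fixed $n$ is immediate from the analyticity of the Riesz projector $P_n : U^\sigma\to\LL(H^\sigma_+,H^{1+\sigma}_+)$ combined with continuity of the pairings against the fixed test vectors $1$ and $e_n$. The real work is to prove uniform-in-$n$ estimates: for (ii), the lower bound $|\1 h_n(u)|e_n\2|\ge 1/2$ on a common neighborhood, and for (i), $\h^{1+s}_+$-membership of $\Psi(u)$ together with the analyticity of $\Psi$ viewed as a map into $\h^{1+s}_+$.

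The basic tool is the Neumann expansion of the Riesz projector,
\[
h_n(u)\,=\,P_n(u)e_n\,=\,\sum_{m\ge 0}h_n^{(m)}(u),\quad h_n^{(m)}(u):=-\frac{1}{2\pi i}\oint_{\partial D_n}(D-\lambda)^{-1}\bigl[T_u(D-\lambda)^{-1}\bigr]^m e_n\,d\lambda ,
\]
which by Remark \ref{rem:shifted_norm} converges in the shifted norm $\|\cdot\|_{\sigma;n}$ uniformly in $n$ and in $u$ on a small ball of $H^\sigma_{c,0}$. Iterating $T_u(D-\lambda)^{-1}$ against $e_n$ produces the explicit multi-sum
\[
\bigl[T_u(D-\lambda)^{-1}\bigr]^m e_n\,=\sum_{j_1,\dots,j_m\ge 0}\frac{\widehat u(j_1-n)\,\widehat u(j_2-j_1)\cdots\widehat u(j_m-j_{m-1})}{(n-\lambda)(j_1-\lambda)\cdots(j_{m-1}-\lambda)}\,e_{j_m},
\]
and the Fourier coefficients of $h_n^{(m)}(u)$ at $0$ (for (i)) and at $n$ (for (ii)) are then evaluated by residues at the unique pole $\lambda=n$ inside $\partial D_n$. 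The mean-zero constraint $\widehat u(0)=0$ eliminates the configurations in which the numerator would vanish. A direct computation yields $h_n^{(1)}(u)=\sum_{k\ge 0,\,k\ne n}\tfrac{\widehat u(k-n)}{k-n}e_k$, so automatically $\widehat{h_n^{(1)}(u)}(n)=0$ and $\widehat{h_n^{(1)}(u)}(0)=-\widehat u(-n)/n$ for $n\ge 1$.

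With these building blocks, (ii) follows from the operator bound $\|h_n^{(m)}(u)\|_{\sigma;n}\le C(C'\|u\|_\sigma)^m$ supplied by Remark \ref{rem:shifted_norm} (using $\|e_n\|_{\sigma;n}=1$), which gives $|\widehat{h_n^{(m)}(u)}(n)|\le C(C'\|u\|_\sigma)^m$ uniformly in $n\ge 0$; combined with $\widehat{h_n^{(0)}(u)}(n)=1$ and $\widehat{h_n^{(1)}(u)}(n)=0$, shrinking $U^s$ yields $|\1 h_n(u)|e_n\2-1|\le 1/2$. For (i), the first-order sequence $(-\widehat u(-n)/n)_{n\ge 1}$ already lies in $\h^{1+s}_+$ with norm bounded by $\|u\|_s$, since $\sum_n n^{2(1+s)}|\widehat u(-n)/n|^2=\sum_n n^{2s}|\widehat u(-n)|^2\le\|u\|_s^2$. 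For $m\ge 2$, rewriting the residue formula in terms of the bond momenta $l_1,\dots,l_m\in\Z\setminus\{0\}$ with $l_1+\cdots+l_m=-n$ exposes the Sobolev regularity of $u$, and a convolution-type estimate yields $\|(\widehat{h_n^{(m)}(u)}(0))_{n\ge 1}\|_{\h^{1+s}_+}\le C(C'\|u\|_s)^m$. Summing over $m$ then gives $\Psi(u)\in\h^{1+s}_+$ and the analyticity of $\Psi$ as a locally uniformly convergent series of homogeneous polynomial maps $U^s\to\h^{1+s}_+$.

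\textbf{Main obstacle.} The decisive step is the order-$m$ bound in (i): the iterated residue must be reorganized so that the weight $n^{1+s}$ is absorbed by the Sobolev regularity of a single factor $\widehat u(l_j)$ whose frequency is forced to be of size $\sim n$ by the constraint $l_1+\cdots+l_m=-n$, while the remaining factors are controlled by lower-regularity Sobolev norms of $u$ and the small denominators $k_1,\dots,k_{m-1}$ combine favorably with the weights. Only after this combinatorial/convolution estimate is established uniformly in $n$ with at most geometric-in-$m$ constants does one obtain both $\h^{1+s}_+$-membership and the analytic (rather than merely smooth) regularity of $\Psi$.
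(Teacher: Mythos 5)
Your strategy coincides with the paper's: componentwise analyticity from the analyticity of the Riesz projectors (Proposition \ref{prop:L-local}), reduction of the analyticity of $\Psi$ as a map into $\h^{1+s}_+$ to a local boundedness statement (via \cite[Theorem A.5]{KP-book}), and the Neumann-series/residue expansion of $\widehat{h_n(u)}(0)$ and $\widehat{h_n(u)}(n)$ with the re-centering $\mu=\lambda-n$, $l_j=k_j-n$. Your treatment of item (ii) via the shifted norms of Remark \ref{rem:shifted_norm}, using $\|e_n\|_{\sigma;n}=1$ and $\widehat{h_n^{(0)}(u)}(n)=1$, $\widehat{h_n^{(1)}(u)}(n)=0$, is correct and in fact more explicit than the paper, which omits the proof of (ii) as ``similar''.

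The one place where your write-up stops short is the step you yourself single out as decisive: the order-$m$ bound $\big\|\big(\widehat{h_n^{(m)}(u)}(0)\big)_{n\ge1}\big\|_{\h^{1+s}_+}\le C(C'\|u\|_s)^m$. As written this is a gap, but it is closed exactly along the lines you anticipate. On $\partial D_0$ one has the elementary bounds $|n+\mu|\ge n/2$ and $|l_j-\mu|\ge(|l_j|+1)/5$ (cf. \eqref{estimate l_j - mu}); the factor $1/|n+\mu|$, which originates from the zero-mode resolvent denominator $1/\lambda$ produced by pairing with $1$, cancels a full power of $n$, so only a weight $n^{s}$ (not $n^{1+s}$) remains. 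The residual iterated sum is precisely $m$ applications of the convolution operator $Q(|\hu|)[y]_\ell=\sum_k\frac{|\hu(\ell-k)|}{|k|+1}\,y_k$ to the sequence $(|\hu(l_1)|)_{l_1}$, evaluated at $\ell=-n$, and the bound $\|Q(z)[y]\|_s\le C_s\|z\|_s\|y\|_s$ of Lemma \ref{new version map Q} (a consequence of \cite[Lemma 1]{GKT2}) yields the geometric constant $5^{m+1}C_s^{m}\|u\|_s^{m+1}$. Note that your heuristic in the last paragraph --- that the weight is absorbed by a \emph{single} factor $\hu(l_j)$ of frequency $\sim n$ --- is not quite how the estimate works: the remaining $\h^s$-weight is distributed over all factors through the iterated convolution bound, and no single large frequency needs to be isolated. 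With Lemma \ref{new version map Q} supplied, your argument is the paper's proof.
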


\begin{proof}[Proof of Proposition \ref{prop:Psi}]
Take $s>-1/2$ and let $ U^{\sigma}$ be an open neighborhood of zero in $H^{\sigma}_{c,0}$ so that
the statement of Proposition \ref{prop:L-local} holds. Then, by the discussion above,
the map \eqref{eq:Psi_n-map} is analytic for any $n\ge 0$. In particular,  the components
$\big\1 1,h_n(u)\big\2$, $n \ge 1$, of the map \eqref{eq:Psi} are analytic on $U^\sigma$. 
Hence, item {\rm(i)} will follow from \cite[Theorem A.5]{KP-book} once we prove
that the restriction of the map \eqref{eq:Psi} to a sufficiently small neighborhood $U^s \subseteq U^\sigma \cap H^s_{c,0}$ of 
zero in $H^s_{c,0}$ is bounded. Let us show this.
It follows from Proposition \ref{prop:L-local} that for any $n\ge 1$ and 
for any $u\in U^\sigma$ we have 
\begin{align}
\Psi_n(u) := \big\1 1,h_n( u)\big\2&=-\frac{1}{2\pi i}\oint_{\partial D_n}
\big\1(L_u-\lambda)^{-1}e_n,1\big\2\,d\lambda\nonumber\\
&=\frac{1}{2\pi i}\sum\limits_{m\ge 1}\oint_{\partial D_n}\big\1[T_u(D-\lambda)^{-1}]^me_n,1\big\2
\,\frac{d\lambda}{\lambda}\nonumber\\
&=\Psi^{(1)}_n(u)+\sum\limits_{m\ge 1}\Psi^{(m+1)}_n(u)\label{eq:Psi-expansion}
\end{align}
where
\begin{equation}\label{eq:Psi-linear_term}
\Psi^{(1)}_n(u):=-\frac{\hu(-n)}{n} , 
\end{equation}
\begin{equation}\label{eq:Psi-higher_order_terms}
\Psi^{(m+1)}_n(u):=\sum_{\tb{k_j\ge 0}{1\le j\le m}}  \Psi_{n, u}(k_1,...,k_m),\qquad m\ge 1,
\end{equation}
and
\begin{equation}\label{eq:Psi_u}
\Psi_{n, u}(k_1,...,k_m) :=\frac{1}{2\pi i}\oint_{\partial D_n}
\frac{\hu(-k_m)}{k_m-\lambda}\frac{\hu(k_m-k_{m-1})}{k_{m-1}-\lambda}\cdots\frac{\hu(k_1-n)}{n-\lambda}
\,\frac{d\lambda}{\lambda}.
\end{equation}
By Proposition \ref{prop:L-local} and Remark \ref{rem:shifted_norm}, the series in \eqref{eq:Psi-expansion} and 
\eqref{eq:Psi-higher_order_terms} converge absolutely and uniformly on $U^\sigma$. Moreover, by Remark \ref{rem:shifted_norm}, 
for any $n\ge 1$ and for any $m\ge 0$, $\Psi^{(m+1)}_n : H^{\sigma}_+\to\C$ is a bounded polynomial map of order $m+1$.
Hence, for any $n\ge 1$ the series \eqref{eq:Psi-expansion} is the Taylor's expansion of $\Psi_n : U^\sigma \to\C$ at zero $u=0$. 
For $m=1$, one obtains from \eqref{eq:Psi-higher_order_terms}, \eqref{eq:Psi_u}, and  Cauchy's formula that for any $n\ge 1$,
\[
\Psi^{(2)}_n(u)=-\frac{1}{n}\sum_{k_1\ge 0,k_1\ne n}\!\!\!\hu(-k_1)\,\frac{\hu(k_1-n)}{k_1-n}
=-\frac{1}{n}\sum_{l_1\ge -n,l_1\ne 0}\!\!\!\hu(-n-l_1)\,\frac{\hu(l_1)}{l_1}.
\]
Let us now consider the general case $m\ge 1$. 
By passing to the variable $\mu:=\lambda-n$ in the contour integral \eqref{eq:Psi_u} and then setting
$l_j:=k_j-n$, $1\le j\le m$, we obtain from \eqref{eq:Psi-higher_order_terms} and \eqref{eq:Psi_u} that
\begin{equation}\label{eq:Psi-higher_order_terms'}
\Psi^{(m+1)}_n(u)=\sum_{\tb{l_j\ge-n}{1\le j\le m}} H_{u, n} (l_1,...,l_m),\quad m\ge 2,
\end{equation}
where $ H_{ u, n}(l_1, \ldots, l_m)$ is given by
\begin{equation}\label{eq:H_u}
- \frac{1}{2\pi i}\!\!\oint\limits_{\partial D_0}\!\!\frac{1}{n+\mu}
\frac{\hu(-n-l_m)}{l_m-\mu}\frac{\hu(l_m-l_{m-1})}{l_{m-1}-\mu}\cdots\frac{\hu(l_2-l_1)}{l_1-\mu}\,\hu(l_1)
\,\frac{d\mu}{\mu}.
\end{equation}
Note that the latter integral and hence $H_{ u, n}$  is defined for any $l_1, \ldots, l_m$ in $\Z$. 

To estimate the term $H_{ u, n}(l_1, \ldots, l_m)$, note that for any $\mu \in \partial D_0$, $n \ge 1$, and $\ell_1, \ldots, \ell_m \in \Z$
\begin{equation}\label{estimate l_j - mu}
| n + \mu| \ge \frac{n}{2}, \qquad \quad  |\ell_j - \mu | \ge \frac{|\ell_j| +1}{5}, \quad 1\le j \le m\, ,
\end{equation}
implying that
$$
|H_{ u, n}(l_1, \ldots, l_m)| \le 
 \frac{2}{n} 5^m \frac{|\hu(-n-l_m)|}{|l_m| +1}\frac{|\hu(l_m-l_{m-1})|}{|l_{m-1}| + 1}\cdots\frac{|\hu(l_2-l_1)|}{|l_1| + 1}\,|\hu(l_1)|
$$
By Lemma \ref{new version map Q} below, it then follows that for a y $u \in U^\sigma \cap H^s_{c, 0}$, 
\begin{align}\label{estimate H_{u,n}}
&  \sum_{n\ge 1}|n|^{2(1+s)}\Big(\sum_{\tb{l_j\ge-n}{1\le j\le m}}\big|H_{u, n}(l_1,...,l_m)\big|\Big)^2  \nonumber\\
& \le (5^{m+1})^2 \sum_{n \ge 1} |n|^{2s} 
\Big( \sum_{\tb{l_j \in \Z}{1\le j\le m}} \frac{|\hu(-n-l_m)|}{|l_m| +1} \cdots \frac{|\hu(l_2-l_1)|}{|l_1| + 1}\,|\hu(l_1)|
\Big)^2  \nonumber\\
& \le \Big( 5^{m+1}C_s^{m}\|u\|_{s}^{m+1}\Big)^2 \le  \Big( 5 C_s \|u\|_{s} \Big)^{2(m + 1)} \nonumber
\end{align}
where $C_s \ge 1$ is the constant given by Lemma \ref{new version map Q}. 
Hence, by shrinking $U^\sigma \cap H^s_{c, 0}$ to a (bounded) neighborhood $U^s \subseteq H^{s}_{c,0}$, 
the latter estimate implies that
\[
\big\|\Psi(u)\big\|_{\h^{1+s}_+}\le\sum_{m\ge 0}\big\|\Psi^{(m+1)}(u)\big\|_{\h^{1+s}_+}\le 1, \qquad  \forall \, u\in U^s.
\]
This proves item {\rm (i)}. The proof of item {\rm (ii)} is similar and hence omitted.
\end{proof}

The following estimate follows directly from \cite[Lemma 1]{GKT2}.
\begin{Lem}\label{new version map Q}
For any $ z = (z_n)_{n \in \Z} \in \h^{s}_c$
with $s > -1/2$, the linear operator
\[
Q(z) : \h^{s}_c \to \h^{s}_c , \,  y = (y_k)_{k \in \Z} \mapsto 
Q(z)[ y] :=\Big( \sum_{k \in \Z} \frac{z_{\ell - k}}{|k| + 1} y_k \Big)_{\ell \in \Z},
\]
is well defined and there exists a constant $C_s \ge 1$ so that
\[
\| Q(z)[ y]\|_s \le C_s \| z\|_s  \| y\|_s , \qquad \forall \, z,  y \in \h^{s}_c.
\]
\end{Lem}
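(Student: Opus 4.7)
The operator $Q(z)$ admits a convolution description: setting $\tilde y_k := y_k/(|k|+1)$, one has $(Q(z)[y])_\ell = (z * \tilde y)_\ell$, and since $|k|+1 \asymp \langle k \rangle$ the weighting gain yields $\|\tilde y\|_{\h^{s+1}_c} \le \|y\|_{\h^s_c}$. Identifying a sequence $a \in \h^\beta_c$ with its trigonometric series $A(x) := \sum_n a_n e^{inx} \in H^\beta(\T)$ converts convolution into pointwise multiplication, so that $Q(z)[y]$ is the sequence of Fourier coefficients of the product $Z \cdot \tilde Y$. Consequently the claimed bound is equivalent to the bilinear Sobolev estimate
\[
\|Z\,\tilde Y\|_{H^s(\T)} \le C_s\,\|Z\|_{H^s(\T)}\,\|\tilde Y\|_{H^{s+1}(\T)}, \qquad s > -1/2,
\]
applied with $Z,\tilde Y$ the trigonometric series associated to $z$ and $\tilde y$ respectively.

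To derive this from \cite[Lemma 1]{GKT2} --- which provides the Toeplitz bound $\|\Pi(uf)\|_{H^s_+} \le C_s\,\|u\|_{H^s_c}\,\|f\|_{H^{1+s}_+}$ for $s > -1/2$, with $\Pi$ the Szeg\H o projector --- I would first split $\tilde Y = \tilde Y_+ + \tilde Y_-$ into its non-negative and strictly-negative Fourier parts, and then decompose $Z\,\tilde Y$ further via $\Pi$ and its complement $\Pi^- := I - \Pi$, obtaining four pieces. The piece $\Pi(Z\,\tilde Y_+) = T_Z\,\tilde Y_+$ is controlled in $H^s_+$ directly by the Toeplitz bound, since $\tilde Y_+ \in H^{1+s}_+$. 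Each of the three remaining pieces $\Pi^-(Z\,\tilde Y_+)$, $\Pi(Z\,\tilde Y_-)$, $\Pi^-(Z\,\tilde Y_-)$ is reduced to the first one by applying the isometric complex-conjugation $f \mapsto \bar f$ on $H^\beta(\T)$, which preserves the $H^\beta$ norm while swapping $H^\beta_+$ with $\overline{H^\beta_+}$; summing the four contributions then produces the desired estimate with constant at most four times that of \cite[Lemma 1]{GKT2}. I do not anticipate any genuine obstacle here --- the content is purely bilinear and is effectively already established once the Toeplitz version is in hand; the only remaining work is the bookkeeping of the Hardy decomposition.
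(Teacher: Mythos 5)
Your reduction of the statement to the bilinear estimate $\|Z\,\tilde Y\|_{H^s}\le C_s\|Z\|_{H^s}\|\tilde Y\|_{H^{s+1}}$ is sound: the weight transfer $\|\tilde y\|_{\h^{s+1}_c}\le\|y\|_{\h^s_c}$ (using $\langle k\rangle\le |k|+1$) and the convolution--product dictionary are correct, and this is presumably what the paper intends when it says the lemma ``follows directly'' from \cite[Lemma 1]{GKT2}. The gap is in the claim that all three remaining Hardy pieces reduce to the Toeplitz piece by complex conjugation. Conjugation reverses the sign of \emph{every} frequency at once: it turns the projector $\Pi$ into the projector onto non-positive modes \emph{and} turns $\tilde Y_{\pm}$ into a function whose spectrum has the opposite sign. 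Hence it maps $\Pi^-(Z\tilde Y_-)$ to $\Pi^{>0}\big(\bar Z\,\overline{\tilde Y_-}\big)$ with $\overline{\tilde Y_-}\in H^{1+s}_+$ --- that piece is indeed handled by the Toeplitz bound --- but it maps $\Pi^-(Z\tilde Y_+)$ to $\Pi^{>0}\big(\bar Z\,\overline{\tilde Y_+}\big)$ with $\overline{\tilde Y_+}$ supported on non-positive frequencies, and $\Pi(Z\tilde Y_-)$ to $\Pi^{\le 0}\big(\bar Z\,\overline{\tilde Y_-}\big)$. In both of these the projector and the Hardy class of the $H^{1+s}$ factor still have opposite signs, so the quoted estimate does not apply: to fit the template $\Pi(uf)$, $f\in H^{1+s}_+$, one would be forced to put the $H^{1+s}$ norm on $Z$, i.e.\ on the wrong factor. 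The reflection $x\mapsto -x$ has the same swapping effect, and the composition of the two symmetries fixes frequency supports, so no combination of them converts these two ``Hankel'' pieces into Toeplitz form. A concrete instance: for $Z=e^{-3ix}$ and $\tilde Y_+=e^{ix}$ one has $\Pi^-(Z\tilde Y_+)=e^{-2ix}$, whose conjugate is $\Pi\big(e^{3ix}\cdot e^{-ix}\big)$ with the $H^{1+s}$ factor $e^{-ix}\notin H^{1+s}_+$.

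In Fourier variables the two unhandled pieces are of Hankel type, $c_p=\sum_{k\ge 0}a_{p+k}b_k$, rather than Toeplitz type, $c_p=\sum_{k\ge 0}a_{p-k}b_k$; the required bound for them is true for $s>-1/2$ but needs its own (standard, yet not free) argument --- precisely in the range $-1/2<s\le 0$ it rests on $2s+1>0$, just as the Toeplitz case does. Alternatively, the issue disappears if \cite[Lemma 1]{GKT2} in fact controls the full product, $\|uf\|_s\le C_s\|u\|_s\|f\|_{1+s}$ for $f\in H^{1+s}_+$, rather than only $\|\Pi(uf)\|_s$: then the splitting $\tilde Y=\tilde Y_++\tilde Y_-$ together with a single conjugation applied to the second summand finishes the proof, and no Szeg\H o decomposition of the output is needed at all. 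As written, however, your argument leaves the pieces $\Pi^-(Z\tilde Y_+)$ and $\Pi(Z\tilde Y_-)$ unbounded, so the proof does not close.
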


\bigskip


\section{Analytic extension of the Birkhoff map in a neighborhood of zero}\label{sec:the_Birkhof_map}

The goal of this section is to extend for any $s > -1/2$ the Birkhoff map, defined on $H^{s}_{r,0}$ \big(cf. \cite{GK} ($s=0$),
\cite{GKT1} ($s > -1/2$)\big), to an analytic map on an open neighborhood of zero in $H^{s}_{c,0}$. 

Recall from \cite{GKT1} that for any $ s > -1/2 $, the Birkhoff map is given by
\begin{equation}\label{eq:Phi_n}
\Phi : H^{s}_{r,0}\to\h^{\frac{1}{2}+s}_+,\,
u\mapsto \big(\Phi_n(u)\big)_{n\ge 1} ,
\qquad
\Phi_n(u)\equiv\zeta_n(u):=\frac{\big\1 1|f_n( u)\big\2}{\sqrt[+]{\kappa_n(u)}},
\end{equation}
where the sequence space $\h^{\frac{1}{2}+s}_+$ is defined in \eqref{def frak h_+},
the eigenfunctions $f_n\equiv f_n(u)$, $n \ge 0$, of the Lax operator $L_u$, corresponding to the
eigenvalues $\lambda_n \equiv \lambda_n(u)$, are normalized so that $\| f_n \| = 1$ and (cf. \cite[Definition 2.1]{GK}, \
\cite[Lemma 6]{GKT1}),
\begin{equation}\label{normalisation f_n}
\langle 1 | f_0 \rangle > 0, \qquad  \langle e^{ix}f_{n-1} | f_n \rangle > 0, \ \  \forall \, n \ge 1,
\end{equation}
and the norming constants $\kappa_n \equiv \kappa_n(u) > 0$, $n \ge 0$, are given by (cf. 
\cite[Corollary 3.1]{GK}, \cite[(29)]{GKT1})
\begin{equation}\label{eq:kappa_n}
\kappa_0:=\prod_{k\ge 1}\Big(1-\frac{\gamma_k}{\lambda_k-\lambda_0}\Big) , \quad
\kappa_n:=\frac{1}{\lambda_n-\lambda_0}
\prod_{n\ne k\ge 1}\Big(1-\frac{\gamma_k}{\lambda_k-\lambda_n}\Big)  ,\quad n\ge 1,
\end{equation}
with
$\gamma_k\equiv\gamma_k(u)$ defined as in Proposition \ref{prop:L-global}.
To construct the analytic extension of $\Phi$, we express the normalisation conditions \eqref{normalisation f_n} 
of the eigenfunctions $(f_n)_{n \ge 0}$ in terms of $(\kappa_n)_{n \ge 0}$, and the norming constants 
$\mu_n\equiv \mu_n(u) > 0$, $n \ge 1$, defined for $u \in H^s_{r,0}$, $s >  -1/2 $, by  \big(cf. \cite[Remark 4.1]{GK}, 
\cite[formula (28)]{GKT2} ($s=0$),   \cite[Section 3]{GKT1}($ -1/2< s < 0$)\big),
\begin{equation}\label{eq:mu_n}
\mu_n:=\Big(1-\frac{\gamma_n}{\lambda_n-\lambda_0}\Big)\prod_{n\ne k\ge 1}
\Big(1-\gamma_n\frac{\gamma_k}{(\lambda_{k-1}-\lambda_{n-1})(\lambda_k-\lambda_n)}\Big) .
\end{equation}
For any $u \in H^s_{r,0}$ and $n \ge 0$, the projection $P_n \equiv P_n(u)$, introduced in \eqref{eq:P_n-local},
coincides with the orthogonal projection
$$
 H^s_+ \to  H^{s + 1/2}_+,  \, f \mapsto \langle f | f_n\rangle f_n .
$$
Expressed in terms of $P_n$, 
the normalisation conditions \eqref{normalisation f_n} of $f_n$ read
\big(cf.  \cite[Remark 4.1, Corollary 3.1]{GK} ($s = 0$), \cite[Lemma 6]{GKT1} ($ -1/2< s \le 0$)\big)
\begin{equation}\label{eq:f_n-normalization'}
 f_n(u) = \frac{1}{\sqrt[+]{\mu_n(u)}\,}P_n\big(S f_{n-1}( u)\big),\ \  n\ge 1,
 \qquad \1 f_0( u),1\2=\sqrt[+]{\kappa_0(u)},
\end{equation}
where  $S$ denotes the shift operator $S : H^{1+s}_+\to H^{1+s}_+$, $u(x)\mapsto u(x) e^{i x}$. 
We note that for any $u \in H^s_{r,0}$, $-1/2 < s \le 0$, the infinite products in \eqref{eq:kappa_n} and \eqref{eq:mu_n} are well defined 
and converge absolutely (cf. \cite[Section 3]{GKT1}). 

\begin{Rem}\label{rem:Birkhoff_maps}
Using that for any $s >  -1/2$, the inclusion 
$$
\imath : \h^{\frac{1}{2}+s}_+\to \h^{\frac{1}{2} +s}_{r, 0}, \, 
(z_n)_{n\ge 1} \mapsto \big( (\, \overline{z_{-n}} \,)_{n \le -1}, (z_n)_{n\ge 1} \big),
$$ 
is an $\R$-linear isomorphism\footnote{We ignore the complex structure of 
$\h^{\frac{1}{2}+s}_+$ and consider the space as a real Banach space.},
it follows from \cite[Theorem 1]{GK} ($s=0$) and \cite[Theorem 6]{GKT1} ($-1/2 < s < 0$)
and \cite[Proposition 5, Appendix A]{GKT1} ($s > 0$) 
that for any $s >  -1/2$, the map 
\begin{equation}\label{eq:Phi}
 H^{s}_{r,0}\to\h^{\frac{1}{2}+s}_{r,0},\quad
u\mapsto\big(( \, \overline{\Phi_{-n}(u)} \, )_{n\le -1}, \, (\Phi_n(u))_{n\ge 1}\big) 
\end{equation}
is a homeomorphism.
For notational convenience, we denote it also by $\Phi$ and also refer to it as Birkhoff map.
\end{Rem}

\begin{Rem}\label{rem:normalizations}
We record that for $u\in H^{s}_{r,0}$, $s > -1/2$, $\kappa_n(u)>0$, $n \ge 0$, and $\mu_n(u)>0$, $n\ge 1$. 
If $u=0$ then $f_n( u)=h_n( u)= e_n$ (cf. \eqref{eq:h_n})
and $\lambda_n(u) = n$ for any $n \ge 0$, implying that
$\gamma_n(u) = 0$ for any $n \ge 1$, $\kappa_0(u)= 1$ and 
$n\kappa_n(u)=1$, $\mu_n(u)=1$ for any $n\ge 1$.
\end{Rem}

For any $-1/2 < s \le 0$, let $U\equiv U^{s}$ be an open neighborhood of zero in $H^{s}_{c,0}$, chosen so that 
the statement of Proposition \ref{prop:L-local} and Proposition \ref{prop:Psi} hold.  In the course of our argument,
we will shrink $U$ several times, but continue to denote it by $U$.
It follows from \eqref{eq:h_n} and Proposition \ref{prop:L-local} that for any $n\ge 0$ the map
\begin{equation}
U\to H^{1+s}_+,\quad u\mapsto h_n( u),
\end{equation}
is analytic. In addition, $h_n(u)\ne 0$ for any $n\ge 0$ by Proposition \ref{prop:Psi} $(ii)$.
In particular, we obtain that
\begin{equation}\label{eq:h_n-normalization}
P_n\big(S h_{n-1}(u)\big)=\nu_n(u)\, h_n(u),\qquad  n\ge 1,
\end{equation}
where $\nu_n : U\to\C$ is analytic. 
It follows from \cite[Theorem 3]{GKT2} and Proposition \ref{prop:L-local} that for any $n\ge 1$,
the infinite product in \eqref{eq:mu_n} converges absolutely for $u\in U$. Hence
$\mu_n$, $n\ge 1$, extend as analytic functions to the neighborhood $U$ of zero in $H^{s}_{c,0}$. 
By shrinking the neighborhood $U$ if needed, we then obtain from
\cite[Proposition 5]{GKT2} that
\begin{equation}\label{eq:mu_n-nonzero}
\big|\mu_n(u)-1\big|<\frac{1}{2}, \qquad  u\in U, \, n\ge 1,
\end{equation}
implying that for any $n \ge 1$, the map
\begin{equation}\label{eq:mu_n-analytic}
\sqrt[+]{\mu_n} : U\to\C
\end{equation} 
is well defined and analytic. 
By shrinking the neighborhood $U$ once more if necessary, we obtain from \cite[Corollary 6]{GKT2} that
 $\kappa_n$, $n \ge 0$, in \eqref{eq:kappa_n}  extend as analytic functions to the neighborhood $U$. 
Similarly, by shrinking $U$ further if necessary,  we obtain from
\cite[Proposition 4]{GKT2} and the fact that $\kappa_0(0)=1$ that for any $u\in U$,
\begin{equation*}
\big|\kappa_0(u)-1\big|<\frac{1}{2}\quad\text{\rm and}\quad\big|n\kappa_n(u)-1\big|<\frac{9}{10},\quad n\ge 1,
\end{equation*}
and, by \cite[Proposition 3]{GKT2}, we have the following lemma.

\begin{Lem}\label{lem:varkappa}
For any $-1/2 < s \le 0$ there exists an open neighborhood $U^s$ of zero in $H^{s}_{c,0}$ so that the map
\begin{equation}\label{eq:varkappa}
\varkappa : U^s \to \ell^\infty_+,\quad u\mapsto  (\varkappa_n(u))_{n \ge 1}, \qquad
\varkappa_n(u):= 1/\sqrt[+]{n\kappa_n(u)},
\end{equation}
is analytic.
\end{Lem}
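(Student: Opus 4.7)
The plan is to reduce the claimed analyticity to an application of the general principle used repeatedly in the paper (namely \cite[Theorem A.5]{KP-book}): a locally bounded map into a sequence Banach space whose coordinate functionals are all analytic is itself analytic. The two ingredients one needs are therefore (a) analyticity of each component $\varkappa_n : U^s \to \C$, and (b) a uniform estimate $\sup_{n \ge 1} |\varkappa_n(u)| \le C$ on a suitable neighborhood of zero in $H^s_{c,0}$.

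For (a), the discussion preceding the lemma already supplies, after shrinking $U$ if necessary, analyticity of each $\kappa_n : U \to \C$ together with the pointwise bound
\[
|n\kappa_n(u)-1| < 9/10, \qquad \forall\, u \in U,\ n \ge 1,
\]
coming from \cite[Proposition 4]{GKT2}. Since the open disk $\{w \in \C : |w - 1| < 9/10\}$ is contained in $\C\setminus(-\infty,0]$, the principal branch $\sqrt[+]{\,\cdot\,}$ is holomorphic on it, so the composition $u \mapsto \sqrt[+]{n\kappa_n(u)}$ is analytic on $U$. On the same disk one has $\re(w) > 1/10$, hence $|w| > 1/10$ and $|\sqrt[+]{w}\,| > 1/\sqrt{10}$. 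It follows that $\varkappa_n(u) = 1/\sqrt[+]{n\kappa_n(u)}$ is analytic on $U$ and satisfies the uniform bound
\[
|\varkappa_n(u)| < \sqrt{10}, \qquad \forall\, u \in U,\ n \ge 1,
\]
which gives (b). In particular $\varkappa : U \to \ell^\infty_+$ is locally bounded, in fact uniformly bounded on $U$.

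Since the coordinate evaluations $\{e_n^* : x \mapsto x_n\}_{n \ge 1}$ form a point-separating (in particular total) subset of $(\ell^\infty_+)^*$, and each $e_n^* \circ \varkappa = \varkappa_n$ is analytic on $U$, the criterion \cite[Theorem A.5]{KP-book} yields analyticity of $\varkappa$ as a map $U \to \ell^\infty_+$, with $U^s := U$. There is no substantive obstacle in this argument: all the nontrivial analytic work, namely the uniform separation of $n \kappa_n(u)$ from $0$ and $(-\infty,0]$ and the analyticity of each $\kappa_n$, has already been carried out in \cite{GKT2}, and the role of the present lemma is simply to package these pointwise facts into a single analytic statement valued in $\ell^\infty_+$.
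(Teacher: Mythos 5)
Your argument is correct and matches the paper's (very terse) proof in substance: the paper likewise relies on the componentwise analyticity of $\kappa_n$ and the uniform bound $|n\kappa_n(u)-1|<9/10$ established in the discussion just before the lemma, deferring the final packaging step to a citation of \cite{GKT2}, while you carry that step out directly via the holomorphy and lower bound of $w\mapsto \sqrt[+]{w}$ on the disk $|w-1|<9/10$ together with the locally-bounded-plus-componentwise-analytic criterion. The only cosmetic point is that for $\ell^\infty_+$-valued maps the paper invokes \cite[Theorem A.3]{KP-book} (as in the proof of Lemma \ref{lem:a}) rather than Theorem A.5; since the coordinate functionals are norming on $\ell^\infty_+$, the criterion applies exactly as you use it.
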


 Proposition \ref{prop:L-local},
\eqref{eq:mu_n-nonzero}, and \eqref{eq:mu_n-analytic}, then allow us 
to extend $f_n$ to a neighborhood of zero in $H^s_{c, 0}$, $-1/2 < s \le 0$, so that the extension is analytic 
and  the normalization conditions \eqref{eq:f_n-normalization'} are satisfied. We have  the following

\begin{Lem}\label{f_n analytic}
For any $-1/2 < s \le 0$, there exists an open neighborhood $U^s$ of zero in $H^{s}_{c,0}$ so that
for any $n\ge 0$, the map $f_n : U^s \cap H^s_{r, 0} \to H^{1+s}_+$ extends to an analytic map
\begin{equation}\label{eq:f_n-map}
U^s \to H^{1+s}_+,\quad u\mapsto f_n( u).
\end{equation}
\end{Lem}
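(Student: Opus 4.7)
\medskip

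\noindent\textbf{Proof plan for Lemma \ref{f_n analytic}.}
The plan is to construct the analytic extension inductively by writing $f_n(u) = \alpha_n(u)\, h_n(u)$ for a suitable analytic scalar function $\alpha_n : U^s \to \C$. This is natural because, for every $u \in U$, the Riesz projector $P_n(u)$ has rank one and $h_n(u) = P_n(u) e_n$ is a non-zero element of its range. Indeed, Proposition \ref{prop:Psi}(ii) gives the uniform lower bound $|\1 h_n(u) | e_n \2| \ge 1/2$ on $U^s$, so $h_n$ never vanishes and can be used as a basis of $\mathrm{ran}\, P_n(u)$. Since $f_n(u)$ is also in $\mathrm{ran}\, P_n(u)$ (when $u \in H^s_{r,0}$), the scalar $\alpha_n(u)$ is uniquely determined by the normalisation conditions \eqref{eq:f_n-normalization'}, which I will recast as a recursion.

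For the base case $n=0$, the condition $\1 f_0(u), 1\2 = \sqrt[+]{\kappa_0(u)}$ forces
\[
\alpha_0(u) := \frac{\sqrt[+]{\kappa_0(u)}}{\1 h_0(u), 1\2},
\]
and this is analytic on $U^s$: the numerator is analytic because $\kappa_0$ is analytic with $|\kappa_0 - 1| < 1/2$ (by the discussion preceding Lemma \ref{lem:varkappa}) so the principal branch $\sqrt[+]{\,\cdot\,}$ applies, and the denominator equals $\widehat{h_0(u)}(0) = \1 h_0(u) | e_0\2$, which is analytic and bounded below by $1/2$ by Proposition \ref{prop:Psi}(ii). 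For the inductive step, assuming $f_{n-1}(u) = \alpha_{n-1}(u) h_{n-1}(u)$ is analytic, I apply $\frac{1}{\sqrt[+]{\mu_n(u)}} P_n(u) S$ to both sides and invoke the identity \eqref{eq:h_n-normalization}, $P_n(u)(S h_{n-1}(u)) = \nu_n(u) h_n(u)$, to obtain
\[
f_n(u) = \frac{\alpha_{n-1}(u)\,\nu_n(u)}{\sqrt[+]{\mu_n(u)}}\, h_n(u),
\qquad \text{i.e.,}\qquad
\alpha_n(u) = \frac{\alpha_{n-1}(u)\,\nu_n(u)}{\sqrt[+]{\mu_n(u)}}.
\]
Here $\sqrt[+]{\mu_n}$ is analytic and nonvanishing on $U^s$ by \eqref{eq:mu_n-nonzero} and \eqref{eq:mu_n-analytic}; the scalar $\nu_n$ is analytic because, pairing \eqref{eq:h_n-normalization} with $e_n$ and using Proposition \ref{prop:Psi}(ii), $\nu_n(u) = \1 P_n(u) S h_{n-1}(u) \,|\, e_n\2 / \1 h_n(u) \,|\, e_n\2$. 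Thus $\alpha_n$ is analytic on $U^s$, and so is $f_n = \alpha_n h_n$.

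It remains to verify that this extension agrees with the original $f_n$ on $U^s \cap H^s_{r,0}$ and recovers $f_n(0) = e_n$. The former follows from uniqueness: the constructed $f_n$ satisfies both normalisation conditions in \eqref{eq:f_n-normalization'} by design, and these relations uniquely determine $f_n(u)$ for real $u$. For the base point $u = 0$, one has $h_n(0) = e_n$, $\kappa_0(0) = 1$, $\mu_n(0) = 1$, and a direct computation gives $\nu_n(0) = 1$ from $P_n(0) S e_{n-1} = P_n(0) e_n = e_n$, so the recursion yields $\alpha_n(0) = 1$ and therefore $f_n(0) = e_n$, as expected. I do not anticipate any serious obstacle: the work is entirely inductive, and the uniform-in-$u$ lower bounds on $|\1 h_n(u)|e_n\2|$, $|\mu_n(u)|$, and $|\kappa_0(u)|$ provided by the preceding results ensure that no denominator vanishes on $U^s$.
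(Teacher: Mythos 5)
Your construction is correct and follows essentially the same route as the paper: the base case $f_0 = a_0 h_0$ with $a_0 = \sqrt[+]{\kappa_0}/\langle h_0 , 1\rangle$, and the inductive step $f_n = \frac{1}{\sqrt[+]{\mu_n}}P_n(Sf_{n-1})$, which the paper likewise combines with \eqref{eq:h_n-normalization} to obtain the same scalar recursion (its $a_n$, your $\alpha_n$) in \eqref{eq:a_n}. One minor point: the symbol $\alpha_n$ is already reserved in the paper for $\langle P_n e_n | e_n\rangle$ (cf. \eqref{eq:nu_n}), so your scalars should be renamed to avoid the clash.
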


\begin{proof}[Proof of Lemma \ref{f_n analytic}]
Let $ -1/2 < s \le 0$ and choose the neighborhood $U \equiv U^s$ of zero in $H^s_{c,0}$
so that Proposition \ref{prop:L-local} and Proposition \ref{prop:Psi} hold and so that the properties, discussed above, are satisfied.
First we extend the eigenfunction $f_0$ to $U$. Since by \eqref{eq:Psi_n-map}, $h_0 : U \to H^{1+s}_+$ is analytic
and by Proposition \ref{prop:Psi}, $\langle h_0 , 1 \rangle$ does not vanish on $U$, the map
\begin{equation}\label{def a_0}
U \to \C, \, u \mapsto a_0(u):= \frac{\sqrt[+]{\kappa_0(u)}}{\langle h_0(u) , 1 \rangle},
\end{equation}
and hence $U \to H^{1+s}_+, \, u \mapsto f_0(u) := a_0(u)  h_0(u)$,
are well defined and analytic. Using that for any $n \ge 1$,
$U\to\LL\big(H^{s}_+,H^{1+s}_+\big)$, $u\mapsto P_n(u)$ (cf. \eqref{P_n anal near 0})
is analytic and $\mu_n : U \to \C$ satisfies
\eqref{eq:mu_n-nonzero} and \eqref{eq:mu_n-analytic}
one then concludes by induction that for any $n \ge 1$,
$$
U \to H^{1+s}_+, \, u \mapsto f_n(u) := \frac{1} {\sqrt[+]{\mu_n(u)}} P_n(S f_{n-1}(u)) 
$$
is analytic as well. We thus have proved that 
for any $u \in U$, 
$f_n : U \to H^{1+s}_+$, $n \ge 0$, are analytic,
satisfying $L_u f_n(u) = \lambda_n(u) f_n(u)$ and  the normalisation conditions \eqref{eq:f_n-normalization'}.
\end{proof}
Let us now study for any  $u \in U$ and $n \ge 0$ the relation between $f_n(u)$ and $h_n(u)$
where $U \equiv U^s$ is the neighborhood of zero in $H^s_{c,0}$ of Lemma \ref{f_n analytic}.
Recall that by the proof of Lemma \ref{f_n analytic},  $f_0(u) = a_0(u) h_0(u)$ where
$a_0 : U \to \C$ is the analytic map given by \eqref{def a_0}.
Let us now turn to the case $n \ge 1$.
Since $f_n( u)$ and $h_n(u)$ belong to the one dimensional (complex) eigenspace 
of $L_u$ corresponding to the simple eigenvalue $\lambda_n(u)$ and both do not vanish we have that
\begin{equation}\label{eq:f<->h}
f_n(u)=a_n(u) h_n(u),\quad n\ge 1,
\end{equation}
where the map
\begin{equation}\label{eq:a_n-map}
a_n : U\to\C
\end{equation} 
is analytic.
It now follows from \eqref{eq:f<->h}, \eqref{eq:h_n-normalization}, and the normalization conditions 
\eqref{eq:f_n-normalization'}, extended to complex valued potentials $u\in U$ as explained above, 
that for any $u\in U$ and $n\ge 1$,
\begin{eqnarray*}
\sqrt[+]{\mu_n} a_n h_n&=&\sqrt[+]{\mu_n}\,f_n=P_n\big(S f_{n-1}\big)=P_n\big(S (a_{n-1} h_{n-1})\big)\\
&=&a_{n-1} P_n\big(S h_{n-1}\big)=a_{n-1}\nu_n h_n .
\end{eqnarray*}
In view of \eqref{eq:mu_n-nonzero}, \eqref{eq:f<->h}, 
and the fact that $\1 h_0( u)|1\2\ne 0$ (cf. Proposition \ref{prop:Psi} $(ii)$),
one then infers that
\begin{equation}\label{eq:a_n}
a_n(u) =\frac{\nu_n(u)}{\sqrt[+]{\mu_n(u)}}\, a_{n-1}(u),\quad \forall \, n\ge 1.
\end{equation}
Hence, for any $u\in U$ and $n\ge 1$ we have that
\begin{equation}\label{eq:a_n-product_formula}
a_n(u)=a_0(u)\prod_{k=1}^n\frac{\nu_k(u)}{\sqrt[+]{\mu_k(u)}}
=a_0(u)\Big(\prod_{k=1}^n\nu_k(u)\Big)\Big(\prod_{k=1}^n\frac{1}{\sqrt[+]{\mu_k(u)}}\Big)\,.
\end{equation}
It follows from \eqref{eq:h_n-normalization} that for any $u\in U$ and $n\ge 1$,
\[
\nu_n(u)=\frac{\big\1 P_n\big(S h_{n-1}(u)\big)\big|e_n\big\2}{\1 h_n(u)|e_n\2}.
\]
Hence,
\begin{equation}\label{eq:nu_n}
\nu_n(u)=1+\frac{\delta_n(u)}{\alpha_n(u)}, \qquad 
\alpha_n(u):=\1 P_n e_n|e_n\2
\end{equation}
where
\begin{equation}\label{def:delta_n}
\delta_n(u):=\beta_n(u)-\alpha_n(u), \qquad
\beta_n(u):=\big\1 P_nS P_{n-1}e_{n-1}\big|e_n\big\2 .
\end{equation}
Recall from Proposition \ref{prop:Psi} $(ii)$ that 
\begin{equation}\label{eq:alpha_n-estimate}
|\alpha_n(u)|\ge 1/2
\end{equation}
for any 
$u\in U$ and $n\ge 1$.
In Section \ref{sec:the_delta_map} we prove the following important proposition, 
which will be a key ingredient into the proof of Theorem \ref{th:Phi}.

\begin{Prop}\label{prop:delta_n-analyticity}
For any $-1/2 < s \le 0$, there exists an open neighborhood $U^s$ of zero in $H^{s}_{c,0}$ so that the map
\begin{equation}\label{eq:delta}
\delta : U^s \to \ell^1_+,\quad u\mapsto\big(\delta_n(u)\big)_{n\ge 1},
\end{equation}
with $\delta_n$ defined in \eqref{def:delta_n}, is analytic and bounded.
\end{Prop}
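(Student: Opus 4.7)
The plan is to first rewrite $\delta_n$ so that the required cancellations become manifest, then expand in a Neumann series around $u=0$, invoke a vanishing identity (which I expect is the content of Lemma \ref{lem:D=0}) to kill the low-order contributions, and finally estimate the remaining tail in $\ell^1_+$ by the contour-integral machinery already used in Proposition \ref{prop:Psi}.

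\medskip

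\emph{Reformulation.} Since at $u=0$ the Lax operator is $D$ with $P_{n-1}(0)e_{n-1}=e_{n-1}$ and $Se_{n-1}=e_n$, setting $\tilde P_{n-1}(u):=P_{n-1}(u)-P_{n-1}(0)$ gives
\[
\delta_n(u) \,=\, \beta_n(u)-\alpha_n(u) \,=\, \bigl\langle P_n(u)\,S\,\tilde P_{n-1}(u)\,e_{n-1}\,\big|\,e_n\bigr\rangle .
\]
This form already displays $\delta_n$ as at least first order in $u$ at zero. Analyticity of each $\delta_n$ on $U^s$ is immediate from Proposition \ref{prop:L-local}, so it remains to prove that $\delta(u)\in\ell^1_+$ uniformly on a sufficiently small neighborhood of $0$; analyticity of $\delta:U^s\to\ell^1_+$ will then follow from \cite[Theorem A.5]{KP-book}.

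\medskip

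\emph{Neumann expansion and the vanishing identity.} Using the second resolvent identity $(L_u-\lambda)^{-1}-(D-\lambda)^{-1}=(L_u-\lambda)^{-1}T_u(D-\lambda)^{-1}$ iteratively inside the Riesz integrals defining $P_n$ and $\tilde P_{n-1}$, I would expand
\[
\delta_n(u)=\sum_{M\ge 1}\delta_n^{(M)}(u),\qquad
\delta_n^{(M)}(u)=\sum_{\substack{k\ge 0,\,m\ge 1\\ k+m=M}}\bigl\langle P_n^{(k)}\,S\,\tilde P_{n-1}^{(m)}e_{n-1}\,\big|\,e_n\bigr\rangle ,
\]
where $P_n^{(k)}$, $\tilde P_{n-1}^{(m)}$ denote the degree-$k$ and degree-$m$ parts of the respective Neumann expansions, with $P_n^{(0)}:=P_n(0)$. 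The key step is to show $\delta_n^{(1)}=\delta_n^{(2)}=0$ for every $n\ge 1$. The linear vanishing is immediate from $\hat u(0)=0$. For $M=2$, explicit residue computations of the two surviving contributions give
\[
\bigl\langle P_n(0)\,S\,\tilde P_{n-1}^{(2)}e_{n-1}\,\big|\,e_n\bigr\rangle
\,=\, -\!\!\sum_{\substack{\ell\ne 0\\\ell\ge -(n-1)}}\!\!\frac{\hat u(-\ell)\,\hat u(\ell)}{\ell^{2}}
\,=\, -\bigl\langle P_n^{(1)}\,S\,\tilde P_{n-1}^{(1)}e_{n-1}\,\big|\,e_n\bigr\rangle ,
\]
so they cancel.

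\medskip

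\emph{Estimation of the tail and the main obstacle.} After this reduction $\delta_n=\sum_{M\ge 3}\delta_n^{(M)}$. I would estimate $\sum_{n\ge 1}|\delta_n^{(M)}(u)|$ by adapting the argument of Proposition \ref{prop:Psi}: substitute $k_j=n+\ell_j$ to shift the two contours to $\partial D_0$, and use the uniform bound $|\ell_j-\mu|\ge (|\ell_j|+1)/5$. This dominates each summand of $\delta_n^{(M)}$ by a product of the form $\prod_j |\hat u(\ell_j-\ell_{j-1})|/(|\ell_j|+1)$, to which an $m$-fold iteration of Lemma \ref{new version map Q} applies and yields
\[
\sum_{n\ge 1}|\delta_n^{(M)}(u)| \,\le\, (C_s\|u\|_s)^M,\qquad M\ge 3,
\]
for some constant $C_s\ge 1$ independent of $M$. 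Shrinking $U^s$ so that $C_s\|u\|_s<1/2$ and summing the geometric series then gives the required uniform bound on $\|\delta(u)\|_{\ell^1_+}$. The main obstacle is the vanishing step: each of the two $M=2$ contributions is individually a sum over $\ell$ that \emph{does not decay} as $n\to\infty$, so without the exact residue matching the $\ell^1_+$-summability over $n$ has no chance of holding, and the tail estimate of the last step cannot even be started.
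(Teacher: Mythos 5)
Your reduction $\delta_n(u)=\langle P_n(u)\,S\,\widetilde P_{n-1}(u)e_{n-1}\,|\,e_n\rangle$ and the decision to expand in a Neumann series are exactly the paper's starting point, and your instinct that a vanishing identity (Lemma \ref{lem:D=0}) must kill the dangerous contributions is right. But you have misjudged the scope of the cancellation that is needed, and this breaks your final step.

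After the substitution $l_j=k_j-n$, the degree-$(d+1)$ part of $\delta_n$ splits into a ``main term''
\[
\sum_{l_1,\dots,l_d\ge -n+1}\D(l_1,\dots,l_d)\,E_u(l_1,\dots,l_d),
\]
in which neither $\D$ nor $E_u$ depends on $n$ (only the summation range does), plus boundary terms in which some index is pinned to $l_k=-n$. If $\D\not\equiv 0$, the main term converges as $n\to\infty$ to a generically nonzero constant, so it fails to be $\ell^1$-summable in $n$ at \emph{every} fixed degree $d+1\ge 3$, not only at degree $2$. Your claimed bound $\sum_{n\ge1}|\delta_n^{(M)}(u)|\le (C_s\|u\|_s)^M$ for $M\ge3$ therefore cannot be obtained by ``adapting the argument of Proposition \ref{prop:Psi}'': in that proposition the pairing with the constant function $1$ produces the factor $1/\lambda=1/(n+\mu)=O(1/n)$, which is the sole source of decay in $n$, whereas here the pairing is with $e_n$ and yields only $1/(n-\lambda)=-1/\mu=O(1)$. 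What the paper actually proves (Lemma \ref{lem:D=0}) is that $\D(l_1,\dots,l_d)=0$ for \emph{all} $d\ge1$ and all integer arguments --- a genuinely nontrivial combinatorial identity whose proof occupies several pages --- so that $\delta_n$ reduces entirely to the boundary terms $\RR_n(u)$ of \eqref{eq:R_n}. Those are $\ell^1$-summable in $n$ precisely because the constraint $l_k=-n$ converts the sum over $n$ into part of the sum over $l_k$, which is then controlled by Lemma \ref{new version map Q}; this mechanism, the second essential ingredient, is absent from your proposal. Your own closing remark --- that the $M=2$ contributions individually fail to decay in $n$ --- is the correct diagnosis; it simply applies verbatim to every order $M\ge3$ as well.
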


\begin{Rem}\label{delta_n for u=0}
By Remark \ref{rem:normalizations}, $\delta_n(0) = 0$ for any $n \ge 1$.
\end{Rem}
Proposition \ref{prop:delta_n-analyticity} allows us to prove the following lemma.

\begin{Lem}\label{lem:a}
For any $-1/2 < s \le 0$ there exists an open neighborhood $U^s$ of zero in $H^{s}_{c,0}$ so that the map
\begin{equation}\label{eq:a}
a : U^s \to \ell^\infty_+,\quad u\mapsto\big(a_n(u)\big)_{n\ge 1},
\end{equation}
with $a_n$ defined by \eqref{eq:f<->h}, is analytic.
\end{Lem}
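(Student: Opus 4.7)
The plan is to apply the weak analyticity criterion \cite[Theorem A.5]{KP-book}: if every coordinate $a_n : U^s \to \C$ is analytic and the family $\{a_n\}_{n \ge 1}$ is uniformly bounded on $U^s$, then $a : U^s \to \ell^\infty_+$ is analytic. Coordinate-wise analyticity is immediate from the product representation \eqref{eq:a_n-product_formula}: $a_0$ is analytic by \eqref{def a_0} and the proof of Lemma \ref{f_n analytic}, the factor $\sqrt[+]{\mu_k}$ is analytic by \eqref{eq:mu_n-analytic}, and $\nu_k = 1 + \delta_k/\alpha_k$ is analytic in view of Proposition \ref{prop:delta_n-analyticity} together with the lower bound $|\alpha_k(u)| \ge 1/2$ from \eqref{eq:alpha_n-estimate}. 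Since $a_n$ is a finite product of such analytic scalars, it is itself analytic on $U^s$.

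The core of the argument is thus the uniform bound. From \eqref{eq:a_n-product_formula} one has
\[
|a_n(u)| \le |a_0(u)| \cdot \prod_{k=1}^n |\nu_k(u)| \cdot \prod_{k=1}^n \big| \sqrt[+]{\mu_k(u)} \big|^{-1},
\]
and each of the three factors must be controlled uniformly in $n$ and $u \in U^s$ after shrinking $U^s$ if necessary. The boundedness of $|a_0(u)|$ on $U^s$ is a consequence of continuity. For the first product, Proposition \ref{prop:delta_n-analyticity} provides a uniform bound $\|\delta(u)\|_{\ell^1_+} \le C_\delta$ on $U^s$, so combining with \eqref{eq:alpha_n-estimate} yields
\[
\sum_{k=1}^\infty |\nu_k(u) - 1| = \sum_{k=1}^\infty \Big| \frac{\delta_k(u)}{\alpha_k(u)} \Big| \le 2 C_\delta,
\]
and hence $\prod_{k=1}^n |\nu_k(u)| \le e^{2 C_\delta}$ for every $n \ge 1$ and $u \in U^s$.

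For the second product, the plan is to invoke the absolute convergence of the infinite product defining $\mu_n$ in \eqref{eq:mu_n}, established in \cite[Theorem 3]{GKT2}, in the uniform form on a neighborhood of zero in $H^s_{c,0}$ that is available after further shrinking of $U^s$ via \cite[Propositions 3 and 5]{GKT2}. Combined with $|\mu_k(u) - 1| < 1/2$ and the elementary inequality $\big| 1 - 1/\sqrt[+]{z} \big| \le 2 |z - 1|$ for $|z - 1| < 1/2$, this yields a uniform estimate $\prod_{k=1}^n \big| \sqrt[+]{\mu_k(u)} \big|^{-1} \le e^{C'}$. Assembling the three bounds produces a constant $M$ with $|a_n(u)| \le M$ for all $n \ge 1$ and $u \in U^s$, so the weak analyticity criterion concludes the argument.

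The step I expect to be most delicate is pinning down $\ell^1$-type uniform control on the sequence $(\mu_k(u) - 1)_{k \ge 1}$ on a neighborhood of zero: although pointwise absolute convergence of the product \eqref{eq:mu_n} is known from \cite{GKT2}, obtaining summability that is simultaneously $n$-free and $u$-uniform requires reassembling the perturbative estimates on $\gamma_k$, $\lambda_k - \lambda_n$, and the associated Riesz projector expansions from Proposition \ref{prop:L-local} and Remark \ref{rem:shifted_norm} in a quantitative form. Once this technical input is in place, the rest of the proof is the clean bookkeeping sketched above.
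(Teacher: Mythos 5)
Your proposal is correct and follows essentially the same route as the paper: coordinatewise analyticity plus a uniform bound on $\big(a_n(u)\big)_{n\ge 1}$ obtained from the product formula \eqref{eq:a_n-product_formula}, with the first product controlled through the $\ell^1$-bound on $\delta$ from Proposition \ref{prop:delta_n-analyticity} together with \eqref{eq:alpha_n-estimate}, and the second through uniform summability of $\big(\mu_k(u)-1\big)_{k\ge 1}$. The one step you flag as delicate --- the $n$-free, $u$-uniform $\ell^1$-control of $\big(\log\mu_k(u)\big)_{k\ge 1}$ (which is summability over the index $k$ of the sequence $(\mu_k)_k$, not the convergence of the product over $j\ne n$ inside the definition \eqref{eq:mu_n} of a single $\mu_n$) --- is exactly what the paper imports as a black box from \cite[Theorem 3 and Remark 4]{GKT2}, so no new argument is needed there.
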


\begin{proof}[Proof of Lemma \ref{lem:a}]
For any given $ -1/2 < s \le 0$, choose a neighborhood $U \equiv U^s$ of zero in $H^s_{c, 0}$
so that the results discussed above (in particular Proposition \ref{prop:delta_n-analyticity}) hold.
First, note that by the analyticity of \eqref{eq:a_n-map}, any of the components of 
the map \eqref{eq:a} is an analytic function of $u\in U$. Hence, the lemma will follow if we prove that
the map \eqref{eq:a} is bounded (see e.g. \cite[Theorem A.3]{KP-book}). To this end, we prove that the two products 
appearing on the right side of \eqref{eq:a_n-product_formula} are bounded uniformly in $u\in U$ and $n\ge 1$.
By shrinking the neighborhood $U$ if necessary, we can ensure from 
Proposition \ref{prop:delta_n-analyticity}, Remark \ref{delta_n for u=0}, 
and \eqref{eq:alpha_n-estimate}
that there exists $C>0$ such that for any $u\in U$ and $n\ge 1$ we have that
\[
\sum_{k=1}^n|\delta_k(u)|\le C\quad\text{\rm and}\quad
\left|\frac{\delta_n(u)}{\alpha_n(u)}\right|\le\frac{1}{2}\,.
\]
This together with \eqref{eq:nu_n} and \eqref{eq:alpha_n-estimate} then implies that 
there exists $C>0$ such that for any $u\in U$ and $n\ge 1$,
\begin{equation}\label{eq:log(nu)+sum}
\sum_{k=1}^n\big|\log\nu_k(u)\big|=
\sum_{k=1}^n\left|\log\Big(1+\frac{\delta_k(u)}{\alpha_k(u)}\Big)\right| \le C ,
\end{equation}
where $\log \lambda$ denotes the standard branch of the (natural) logarithm on $\C\setminus(-\infty,0]$, defined by
$\log \lambda:=\log|\lambda|+i\arg \lambda$ and $-\pi<\arg \lambda <\pi$.
Hence, in view of \eqref{eq:log(nu)+sum}, for any $u\in U$ and $n\ge 1$ we obtain
\begin{eqnarray*}
\Big|\prod_{k=1}^n\nu_k(u)\Big|&=&
\left|\exp\Big(\sum_{k=1}^n\log\nu_k(u)\Big)\right|\\
&\le&\exp\Big(\sum_{k=1}^n\big|\log\nu_k(u)\big|\Big)\le\exp C\,.
\end{eqnarray*}
Let us now prove the boundedness of the second product on the right side of \eqref{eq:a_n-product_formula}.
It follows from \eqref{eq:mu_n-nonzero} that $\log\mu_n(u)=\log\big(1-(1-\mu_n(u))\big)$ is 
well defined for any $u\in U$ and $n\ge 1$.
By shrinking the neighborhood $U$ if necessary, we obtain from \cite[Theorem 3 and Remark 4]{GKT2} 
that 
there exists $C>0$ such that for any $u\in U$ and $n\ge 1$ we have that
\begin{equation*}
\sum_{k=1}^n\big|\log\mu_k(u)\big|\le C.
\end{equation*}
This implies that for any $u\in U$ and $n\ge 1$ we obtain
\begin{eqnarray*}
\left|\prod_{k=1}^n\frac{1}{\sqrt[+]{\mu_k(u)}}\right|&=&
\left|\exp\Big(-\frac{1}{2}\sum_{k=1}^n\log\mu_k(u)\Big)\right|\\
&\le&\exp\Big(\frac{1}{2}\sum_{k=1}^n\big|\log\mu_k(u)\big|\Big)\le\exp\big(C/2\big),
\end{eqnarray*}
which completes the proof of the lemma.
\end{proof}

\medskip

As a consequence from Lemma \ref{lem:varkappa}, Proposition \ref{prop:delta_n-analyticity}, and Lemma \ref{lem:a},
we obtain the following more general statement.

\begin{Coro}\label{coro:s>-1/2}
For any $s>-1/2$ there exists an open neighborhood $U^s$ of zero in $H^s_{c,0}$ such that the maps
\eqref{eq:varkappa}, \eqref{eq:delta}, and \eqref{eq:a} are well defined and analytic. 
The neighborhood $U^s$ can be chosen  invariant with respect to the complex conjugation of functions.
\end{Coro}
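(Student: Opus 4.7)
My plan is to reduce the general case to the range $-1/2 < s \le 0$ already settled by Lemma \ref{lem:varkappa}, Proposition \ref{prop:delta_n-analyticity}, and Lemma \ref{lem:a}. For $-1/2 < s \le 0$ the conclusion is immediate: I would take $U^s$ to be the intersection of the three neighborhoods of zero produced by those three results. Analyticity and boundedness of each of the maps \eqref{eq:varkappa}, \eqref{eq:delta}, \eqref{eq:a} survive the finite intersection.

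For $s > 0$, I would exploit the continuous embedding $\iota : H^s_{c,0} \hookrightarrow H^0_{c,0}$. Setting $U^s := \iota^{-1}(U^0)$, where $U^0 \subseteq H^0_{c,0}$ is the neighborhood constructed in the case $s=0$, gives an open neighborhood of zero in $H^s_{c,0}$ because $\iota$ is a bounded linear map. The key observation is that the scalars $\kappa_n(u)$, $\mu_n(u)$, $\delta_n(u)$, and $a_n(u)$ depend on $u$ only through the Lax operator $L_u$, which in Section \ref{Lax operator} is defined uniformly for distributions $u$ of low regularity. Consequently, on $U^s$, the three maps of interest coincide with $\varkappa \circ \iota$, $\delta \circ \iota$, and $a \circ \iota$, respectively. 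Since the composition of an analytic map with a bounded linear map is analytic (cf. \cite[Appendix A]{KP-book}) and the target spaces $\ell^\infty_+$ and $\ell^1_+$ do not depend on $s$, all three maps are automatically analytic and bounded on $U^s$.

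The final adjustment, making $U^s$ invariant under complex conjugation $\mathcal{C} : u \mapsto \bar u$, is routine: $\mathcal{C}$ is an $\R$-linear isometric involution of $H^s_{c,0}$ that fixes the origin, so I would replace $U^s$ by $U^s \cap \mathcal{C}(U^s)$, which is again an open neighborhood of zero and now conjugation-invariant by construction. All three maps remain analytic and bounded on this smaller set.

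I expect no genuine obstacle in the proof of the corollary itself. Its purpose is to consolidate the three preceding analyticity statements, extend them from $-1/2 < s \le 0$ to all $s > -1/2$ via this essentially trivial restriction argument, and record the conjugation symmetry for use in later sections. All of the substantive work is already contained in the preceding lemmas, and in particular in Proposition \ref{prop:delta_n-analyticity} whose proof (via the Vanishing Lemma) is postponed to Section \ref{sec:the_delta_map}.
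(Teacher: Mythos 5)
Your proposal is correct and follows essentially the same route as the paper: the paper sets $\sigma=\min(s,0)$, takes $U^s:=U^\sigma\cap H^s_{c,0}$, and deduces analyticity from the boundedness of the inclusion $U^s\hookrightarrow U^\sigma$, which is exactly your restriction argument (with $\sigma=0$ playing the role of your reference case for $s>0$). The only cosmetic difference is in securing conjugation invariance: the paper takes $U^\sigma$ to be a ball centered at zero, while you intersect $U^s$ with its conjugate image; both are valid.
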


\begin{proof}[Proof of Corollary \ref{coro:s>-1/2}]
Take $s>-1/2$ and denote $\sigma\equiv\sigma(s):=\min(s,0)$. Then, it follows from Lemma \ref{lem:varkappa},
Proposition \ref{prop:delta_n-analyticity}, and Lemma \ref{lem:a} that there exists an open neighborhood
$U^\sigma$ of zero in $H^\sigma_{c,0}$ such that the maps \eqref{eq:varkappa}, \eqref{eq:delta}, and \eqref{eq:a} 
(with $U^s$ replaced by $U^\sigma$) are analytic. Denote by $U^s$ the open neighborhood $U^\sigma\cap H^s_{c,0}$ 
of zero in $H^s_{c,0}$. The analyticity of the maps \eqref{eq:varkappa}, \eqref{eq:delta}, and \eqref{eq:a} then follow from the 
boundedness of the  inclusion $U^s\hookrightarrow U^\sigma$. Finally, by taking $U^\sigma$ above to be 
an open ball in $H^\sigma_{c,0}$, centered at zero, we obtain the last statement in Corollary \ref{coro:s>-1/2}.
\end{proof}

\medskip

Our last step is to rewrite formula \eqref{eq:Phi}, which defines the Birkhoff map 
$\Phi : H^{s}_{r,0}\to\h^{\frac{1}{2}+s}_{r,0}$ for real valued $u\in H^{s}_{r,0}$, $s>-1/2$, in a form that will allow us 
to extend $\Phi$ to an analytic map in a (complex) neighborhood $U^s$ of zero in $H^{s}_{c,0}$. 
To this end, we use the symmetries of the Lax operator $L_u$ established in Corollary \ref{coro:symmetries} in 
Appendix \ref{Appendix symmetries} and argue as follows:
First,  we note that if the map $F : U\to Y$, where $U\subseteq X$ is an open neighborhood and $X$ and $Y$ are 
(complex) Banach spaces, is analytic then so is the map
\begin{equation}\label{eq:F*}
F^* : \overline{U}\to Y,\quad x\mapsto F^*(x):=\overline{F(\overline{x})},
\end{equation}
where $\overline{(\cdot)}$ denotes complex conjugation. With this in mind, we note that for any 
$u\in H^{s}_{r,0}$ with $s > -1/2$ and $n\ge 0$ we have that
\begin{equation}\label{eq:conjugation_trick}
\big\1 1,\overline{f_n(u)}\big\2=\big\1 1,\overline{f_n(\overline{u})}\big\2.
\end{equation}
For any $u\in H^{s}_{r,0}$ with $s > -1/2$, we obtain from Corollary \ref{coro:symmetries} that 
$\lambda_n(u)=\overline{\lambda_n(\overline{u})}$, $n\ge 0$.
This, together with the definition \eqref{eq:kappa_n} of $\kappa_n$ implies that  for any $u\in H^{s}_{r,0}$, $s > -1/2 $,
\begin{equation}\label{eq:kappa_n+symmetries}
\kappa_n(u)=\overline{\kappa_n(\overline{u})},\quad n\ge 0.
\end{equation}
We choose the neighborhood $U^s$ of zero in $H^s_{c,0}$, $s>-1/2$, so that Corollary \ref{coro:s>-1/2}
and Proposition \ref{prop:Psi} hold and so that it is invariant with respect to complex conjugation. 
It then follows from \eqref{eq:Phi_n}, \eqref{eq:f<->h}, \eqref{eq:conjugation_trick}, and 
\eqref{eq:kappa_n+symmetries}, that for any $u\in U^s\cap H^{s}_{r,0}$, $s>-1/2$, and $n\ge 1$,
\begin{align}
\Phi_n(u)&=\sqrt{n}\,\frac{\overline{a_n(\overline{u})}}{\sqrt[+]{n\kappa_n(u)}}\,\big\1 1,\overline{h_n(\overline{u})}\big\2\nonumber\\
&=\sqrt{n}\,\frac{a_n^*(u)}{\sqrt[+]{n\kappa_n(u)}}\,\big\1 1,h_n^*(u)\big\2\label{eq:Phi2}
\end{align}
and
\begin{equation}\label{eq:Phi1}
\overline{\Phi_n(u)}=\sqrt{n}\,\frac{a_n(u)}{\sqrt[+]{n\kappa_n(u)}}\,\big\1 1,h_n(u)\big\2,
\end{equation}
where we used the notation, introduced in \eqref{eq:F*}. Hence, the Birkhoff map \eqref{eq:Phi} satisfies for any 
$u\in U^s\cap H^{s}_{r,0}$, $s>-1/2$, 
\begin{equation}\label{eq:Phi-modified}
\Phi(u)=\Big(\big(\Phi^*_{-n}(u)\big)_{n\le -1},\big(\Phi_n(u)\big)_{n\ge 1}\Big)\in \h^{\frac12 + s}_{r,0}.
\end{equation}
Note that by the discussion above, the right hand sides of the identities in \eqref{eq:Phi2} and \eqref{eq:Phi1} are well defined 
and analytic (cf. Corollary \ref{coro:s>-1/2}) for any $u\in U^s$, $s>-1/2$, and $n\ge 1$.
The main result of this section is the following proposition.

\begin{Prop}\label{prop:Phi-analytic}
For any $s>-1/2$, there exists an invariant with respect to the complex conjugation open neighborhood $U^s$ of zero in $H^{s}_{c,0}$ 
so that the right hand side of the identity in \eqref{eq:Phi-modified} is well defined for any $u\in U^s$ and the map
\begin{equation}\label{eq:Phi_map-modified}
\Phi : U^s\to\h^{\frac{1}{2}+s}_{c,0},\quad u\mapsto\Big(\big(\Phi^*_{-n}(u)\big)_{n\le -1},\big(\Phi_n(u)\big)_{n\ge 1}\Big),
\end{equation}
is analytic.
\end{Prop}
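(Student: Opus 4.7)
The plan is to write each $\Phi_n$ and $\Phi_n^*$ as a product of scalar factors whose analyticity and boundedness on a neighborhood of zero in $H^s_{c,0}$ have already been established in earlier sections, and then invoke the standard criterion (weak analyticity plus local boundedness imply analyticity, as in \cite[Theorem A.3]{KP-book}) to upgrade componentwise analyticity to analyticity of $\Phi$ into $\h^{\frac{1}{2}+s}_{c,0}$.

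First I would fix $s > -1/2$ and choose $U^s$ to be an open ball in $H^s_{c,0}$ centered at zero, hence automatically invariant under complex conjugation, of radius small enough that Proposition \ref{prop:Psi} (analyticity and boundedness of $\Psi : U^s \to \h^{1+s}_+$, with $|\langle h_n(u)\,|\,e_n\rangle| \ge 1/2$) and Corollary \ref{coro:s>-1/2} (analyticity and boundedness of $\varkappa : U^s \to \ell^\infty_+$ and $a : U^s \to \ell^\infty_+$) both hold. Because $U^s = \overline{U^s}$, the starred maps $\varkappa^*$, $a^*$, $\Psi^*$ defined by \eqref{eq:F*} are analytic and bounded on $U^s$ with the same target spaces and the same norm bounds.

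From the identities \eqref{eq:Phi2}--\eqref{eq:Phi1}, for each $n \ge 1$ and $u \in U^s$,
\begin{equation*}
\Phi_n(u) \;=\; \sqrt{n}\,\varkappa_n(u)\,a_n^*(u)\,\Psi_n^*(u), \qquad
\Phi_n^*(u) \;=\; \sqrt{n}\,\varkappa_n^*(u)\,a_n(u)\,\Psi_n(u),
\end{equation*}
so each component is analytic as a $\C$-valued map on $U^s$. For local boundedness, using the elementary identity $n\,\langle n\rangle^{1+2s} = \langle n\rangle^{2(1+s)}$ for $n\ge 1$, the factorization gives
\begin{equation*}
\sum_{n \ge 1} \langle n\rangle^{1+2s}\,|\Phi_n(u)|^2
\;\le\; \|\varkappa\|_{\ell^\infty_+}^2\,\|a^*\|_{\ell^\infty_+}^2\,\|\Psi^*(u)\|_{\h^{1+s}_+}^2,
\end{equation*}
and an analogous estimate with stars interchanged controls $\sum_{n\ge 1} \langle n\rangle^{1+2s}\,|\Phi_n^*(u)|^2$. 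Together these show $\Phi(U^s)$ is a bounded subset of $\h^{\frac{1}{2}+s}_{c,0}$, and \cite[Theorem A.3]{KP-book} then upgrades weak analyticity to analyticity.

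The main obstacle does not really lie in this proposition, which is essentially an assembly step. The two minor points to attend to are (i) choosing $U^s$ so that $U^s = \overline{U^s}$, resolved by restricting to a ball, and (ii) verifying the balance of weights that lets the scalar factor $\sqrt{n}$ appearing in \eqref{eq:Phi2}--\eqref{eq:Phi1} be absorbed when passing from the $\h^{1+s}_+$-norm of $\Psi$ to the $\h^{\frac{1}{2}+s}_{c,0}$-norm of $\Phi$. All the nontrivial inputs --- analyticity of $\varkappa$, analyticity of $a$, and above all the Vanishing Lemma behind Proposition \ref{prop:delta_n-analyticity} --- are supplied by the earlier sections.
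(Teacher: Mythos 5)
Your argument is correct and is essentially the paper's own proof: the paper likewise factors $\Phi_n(u)=\sqrt{n}\,\varkappa_n(u)\,a_n^*(u)\,\Psi_n^*(u)$ and realizes $\Phi^{(2)}$ as the composition of the analytic map $(a^*,\varkappa,\Psi^*)$ with a bounded trilinear multiplication $\ell^\infty_+\times\ell^\infty_+\times\h^{1+s}_+\to\h^{1+s}_+$ and the multiplier $\Lambda^{1/2}:\h^{1+s}_+\to\h^{\frac12+s}_+$, which is exactly your weight bookkeeping $n\,\langle n\rangle^{1+2s}=\langle n\rangle^{2(1+s)}$. The only (immaterial) cosmetic difference is that you invoke componentwise analyticity plus local boundedness where the paper reads analyticity off a commutative diagram of analytic maps.
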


\begin{Rem}
Since $H^{s}_{r,0}$ is a real space in the complex space $H^{s}_{c,0}$ and, similarly,
$h^{\frac12 + s}_{r,0}$ is a real space in the complex space $h^{\frac12 + s}_{c,0}$, we conclude from 
Proposition \ref{prop:Phi-analytic} that the Birkhoff map \eqref{eq:Phi} is real analytic
on $U^s\cap H^{s}_{r,0}$.
\end{Rem}

\begin{proof}[Proof of Proposition \ref{prop:Phi-analytic}]
For a given $s>-1/2$ we choose the open neighborhood $U^s$ of zero in $H^s_{c,0}$ as in Corollary \ref{coro:s>-1/2}
and Proposition \ref{prop:Psi} and assume that $U^s$ is invariant with respect to the complex conjugation.
Then, the map \eqref{eq:Phi_map-modified} is well defined on $U^s$
and takes values in $\h^{\frac{1}{2}+s}_{c,0}$. The map \eqref{eq:Phi_map-modified} is analytic if the map
\begin{equation}\label{eq:Phi^(2)}
\Phi^{(2)} : U^s \to \h^{\frac{1}{2}+s}_+,\quad u\mapsto\big(\Phi_n(u)\big)_{n\ge 1},
\end{equation}
with $\Phi_n(u)$ given by \eqref{eq:Phi2}, is analytic.
Let $\Lambda^{1/2}$ denote the linear multiplier,
$$
\Lambda^{1/2} : \h^{s}_+\to\h^{s - \frac12}_+,\quad(x_n)_{n\ge 1}\mapsto\big(\sqrt{n}\,x_n\big)_{n\ge 1} .
$$
The map $\Phi^{(2)}$ is then represented by the commutative diagram
\[
\begin{tikzcd}
U^s \arrow[d, swap,"{(a^*,\varkappa,\Psi^*)}"]\arrow[drr, bend left, "\Phi^{(2)}"]    &                                              &\\
\ell^\infty_+\times\ell^\infty_+\times \h^{1+s}_+\arrow[r,"M"]&\h^{1+s}_+\arrow[r,"\Lambda^{1/2}"]&\h^{\frac{1}{2}+s}_+\\
\end{tikzcd}
\]
where 
\begin{align}
&(a^*,\varkappa,\Psi^*) : U^s \to\ell^\infty_+\times\ell^\infty_+\times \h^{1+s}_+,\quad
u\mapsto\big(a^*(u),\varkappa(u),\Psi^*(u)\big),\label{eq:vertical_arrow}\\
&M : \ell^\infty_+\times\ell^\infty_+\times \h^{1+s}_+\to\h^{1+s}_+,\quad
\big((x_n)_{n\ge 1}, (y_n)_{n\ge 1},(z_n)_{n\ge 1}\big)\mapsto\big(x_ny_nz_n\big)_{n\ge 1},\label{eq:M}
\end{align}
and $a : U^\sigma \to\ell^\infty_+$, $\varkappa : U^\sigma \to\ell^\infty_+$, and $\Psi : U^s \to \h^{1+s}_+$ are given
respectively in \eqref{eq:a}, \eqref{eq:varkappa}, and \eqref{eq:Psi}.
By Corollary \ref{coro:s>-1/2} and Proposition \ref{prop:Psi} the map \eqref{eq:vertical_arrow} is analytic, 
whereas the map \eqref{eq:M} is analytic since it is a bounded (complex) trilinear map.
This, together with the analyticity of the multiplier operator $\Lambda^{1/2}$ and 
the commutative diagram above implies that \eqref{eq:Phi^(2)} is analytic. This completes the proof of the proposition.
\end{proof}


\section{Analyticity of the delta map}\label{sec:the_delta_map}
In this section we prove Proposition \ref{prop:delta_n-analyticity} of Section \ref{sec:the_Birkhof_map},
which plays a significant role in the proof of Proposition \ref{prop:Phi-analytic}.
Our proof of Proposition \ref{prop:delta_n-analyticity} is based on a vanishing lemma -- see Lemma \ref{lem:D=0} below.
Throughout this section we assume that $-1/2 < s \le 0$.. 

First we make some preliminary considerations.
Assume that $-1/2 < s \le 0$ and let $U\equiv U^{s}$ be an open neighborhood of zero in $H^{s}_{c,0}$ chosen so that 
the statement of Proposition \ref{prop:L-local} holds. 
Then, the Lax operator $L_u = D-T_u$ is a closed operator in $H^{s}_+$ with domain $H^{1+s}_+$, 
has a compact resolvent and its spectrum consists of countably many simple eigenvalues, 
$\lambda_n\in D_n(1/4)$, $n\ge 0$. Recall that the {\em $\delta$-map} is defined on $U$ and for any $u \in U$, one has
\begin{equation}\label{def1:delta_n}
\delta(u) = \big(\delta_n(u)\big)_{n\ge 1}, \qquad \delta_n(u) = \beta_n(u)-\alpha_n(u) ,
\end{equation}
where $\beta_n(u)=\big\1 P_nS P_{n-1}e_{n-1}\big|e_n\big\2$ (cf. \eqref{def:delta_n}) and 
$\alpha_n(u)= \1 P_n e_n|e_n\2$ (cf. \eqref{eq:nu_n}). 

It follows from Proposition \ref{prop:L-local} and Remark \ref{rem:shifted_norm} that the Neumann series 
expansion of the resolvent of $L_u$,
\begin{equation}
(L_u-\lambda)^{-1}=\sum_{m\ge 0}(D-\lambda)^{-1}\big[T_u(D-\lambda)^{-1}\big]^m,
\end{equation}
converges uniformly in $\LL\big(H^{s}_+\big)$ for $(u,\lambda)\in U\times\bigcup_{n\ge 0}\Wert_n^0(1/4)$. 
By the definition of $P_{n-1}$ in \eqref{P_n anal near 0}, this implies that $\beta_n \equiv \beta_n(u)$, $n\ge 1$, 
satisfies
\begin{equation}\label{eq:beta_n}
\beta_n  =-\sum_{m\ge 0}\frac{1}{2\pi i}\oint\limits_{\partial D_{n-1}}
\big\1 P_nS(D-\lambda)^{-1}\big[T_u(D-\lambda)^{-1}\big]^me_{n-1}\big|e_n\big\2\,d\lambda ,
\end{equation}
where the series converges absolutely and where $D_{n-1} = D_{n-1}(1/3)$ and $\partial D_{n-1}$
is counterclockwise oriented. The first term in the latter series is
\begin{eqnarray}
-\frac{1}{2\pi i}\!\!\!\!\oint\limits_{\partial D_{n-1}}\!\!\!\big\1 P_nS(D-\lambda)^{-1}e_{n-1}\big|e_n\big\2\,d\lambda
&=&\big\1 P_nSe_{n-1}\big|e_n\big\2\frac{1}{2\pi i}\!\!\!\!\oint\limits_{\partial D_{n-1}}\!\!\!
\frac{d\lambda}{\lambda-(n-1)}\nonumber\\
&=&\big\1 P_ne_n\big|e_n\big\2=\alpha_n  \label{eq:alpha_n},
\end{eqnarray}
where we used that $S e_{n-1}=e_n$. 
By combining \eqref{def1:delta_n} with \eqref{eq:beta_n} and \eqref{eq:alpha_n} 
we obtain that for $n\ge 1$,
\begin{eqnarray}\label{eq:delta_n}
\delta_n \equiv \delta_n(u)&=&-\sum_{m\ge 1}\frac{1}{2\pi i}\oint\limits_{\partial D_{n-1}}
\big\1 P_nS(D-\lambda)^{-1}\big[T_u(D-\lambda)^{-1}\big]^me_{n-1}\big|e_n\big\2\,d\lambda\nonumber\\
&=&-\sum_{m\ge 1}\sum_{\tb{k_j\ge 0}{1\le j\le m}}C_u(k_1,...,k_m)\,
\1 P_nSe_{k_m}|e_n\2
\end{eqnarray}
where
\[
C_u(k_1,...,k_m):=\frac{1}{2\pi i}\!\!\!\oint\limits_{\partial D_{n-1}}\!\!\!\frac{\hu(k_1-(n-1))}{(n-1)-\lambda}
\frac{\hu(k_2-k_1)}{k_1-\lambda}\cdots\frac{\hu(k_m-k_{m-1})}{k_{m-1}-\lambda}\frac{d\lambda}{k_m-\lambda}.
\]
By passing to the variable $\mu:=\lambda-(n-1)$ in the contour integral above and then setting
$l_j:=k_j-(n-1)$, $1\le j\le m$, we obtain from \eqref{eq:delta_n} that
\begin{equation}\label{eq:first_sum}
\delta_n=\sum_{m\ge 1}\sum_{\tb{l_j\ge -n+1}{1\le j\le m}}A(l_1,...,l_m)B_u(l_1,...,l_m)\,
\1 P_n e_{l_m+n}|e_n\2
\end{equation}
where
\begin{equation}\label{eq:A}
A(l_1,...,l_m):=\frac{1}{2\pi i}\oint\limits_{\partial D_0}\frac{1}{\mu}\prod_{j=1}^m\frac{1}{l_j-\mu}\,d\mu,
\end{equation}
and
\begin{equation}\label{eq:B}
B_u(l_1,...,l_m):=\left\{
\begin{array}{l}
\hu(l_1)\quad\text{\rm if}\quad m=1,\\
\hu(l_1)\hu(l_2-l_1)\cdots\hu(l_m-l_{m-1})\quad\text{\rm if}\quad m\ge 2.
\end{array}
\right.
\end{equation}
Consider the term $\1 P_n e_{l_m+n}|e_n\2$ that appears in \eqref{eq:first_sum}. It follows from
\eqref{eq:neumann_series} and the formula for the Riesz projector that
\begin{eqnarray*}
\1 P_n e_{l_m+n}|e_n\2&=&-\sum_{r\ge 0}\frac{1}{2\pi i}\oint\limits_{\partial D_n}
\big\1(D-\lambda)^{-1}\big[T_u(D-\lambda)^{-1}\big]^re_{l_m+n}\big|e_n\big\2\,d\lambda.\nonumber
\end{eqnarray*}
For the first term in the latter series we have
\begin{equation}\label{eq:r=0}
-\frac{1}{2\pi i}\oint\limits_{\partial D_n}
\big\1(D-\lambda)^{-1}e_{l_m+n}\big|e_n\big\2\,d\lambda=\1 e_{l_m+n}|e_n\2
\frac{1}{2\pi i}\oint\limits_{\partial D_n}\frac{d\lambda}{\lambda-(l_m+n)}=\delta_{l_m0}.
\end{equation}
Hence
\begin{equation}\label{eq:second_sum}
\1 P_n e_{l_m+n}|e_n\2 =\delta_{l_m0}+\sum_{r\ge 1}\sum_{\tb{k_j\ge 0}{1\le j\le r-1}}C_u(k_1,...,k_{r-1})
\end{equation}
where
\[
C_u(k_1,...,k_{r-1}):=\left\{
\begin{array}{l}
-\frac{1}{2\pi i}\oint\limits_{\partial D_n}\!\!\!\frac{\hu(-l_m)}{(l_m+n)-\lambda}\frac{d\lambda}{n-\lambda}
\quad\text{\rm if}\quad r=1,\\
-\frac{1}{2\pi i}\oint\limits_{\partial D_n}\!\!\!\frac{\hu(k_1-(l_m+n))}{(l_m+n)-\lambda}
\frac{\hu(k_2-k_1)}{k_1-\lambda}\cdots\frac{\hu(n-k_{r-1})}{k_{r-1}-\lambda}\frac{d\lambda}{n-\lambda}
\,\,\text{\rm if }r\ge 2.
\end{array}
\right.
\]
By passing to the variable $\mu:=\lambda-n$ in the contour integral and then setting $l_{m+j}:=k_j-n$, 
$1\le j\le r-1$, we obtain from \eqref{eq:second_sum} that
\begin{eqnarray}\label{eq:second_sum'}
\1 P_n e_{l_m+n}|e_n\2&=&\delta_{l_m0}+\sum_{r\ge 1}\sum_{\tb{l_j\ge -n}{m+1\le j\le m+r-1}}
A(l_m,...,l_{m+r-1})B'_u(l_m,...,l_{m+r-1})\nonumber\\
&=&\delta_{l_m0}+\sum_{r\ge 0}\sum_{\tb{l_j\ge -n}{m+1\le j\le m+r}}
A(l_m,...,l_{m+r})B'_u(l_m,...,l_{m+r})
\end{eqnarray}
where
\begin{equation}\label{eq:B'}
B'_u(l_m,...,l_{m+r}):=\left\{
\begin{array}{l}
\hu(l_{m+1}-l_m)\cdots\hu(l_{m+r}-l_{m+r-1})\hu(-l_{m+r})\,\ \text{ \rm if}\,\,r\ge 1 , \\
\hu(-l_m)\quad\text{\rm if}\quad r=0 .
\end{array}
\right.
\end{equation}

In view of the expansion \eqref{eq:second_sum'}, we split $\1 P_n e_{l_m+n}|e_n\2$,
with one of the terms being $\delta_{l_m0}$, and then split $\delta_n$ accordingly.
To this end, it turns out to be useful to introduce  
\begin{equation}\label{eq:E}
E_u(l_1,...,l_d):=\left\{
\begin{array}{l}
\hu(l_1)\hu(-l_1)\quad\text{\rm if}\quad d=1,\\
\hu(l_1)\hu(l_2-l_1)\cdots\hu(l_d-l_{d-1})\hu(-l_d)
\quad\text{\rm if}\quad d\ge 2 
\end{array}
\right.
\end{equation}
By \eqref{eq:B} and \eqref{eq:B'} one has
\[
B_u(l_1,...,l_m)B_u'(l_m,...,l_{m+r})=E_u(l_1,...,l_{m+r}).
\]
By \eqref{eq:first_sum} and \eqref{eq:second_sum'}, we then have
\begin{equation}\label{eq:delta_n+split}
\delta_n=\delta_n^{(1)}+\delta_n^{(2)}
\end{equation}
where
\begin{eqnarray}\label{eq:delta_n'}
\delta_n^{(2)}&:=&\sum_{m\ge 1}\sum_{\tb{l_j\ge -n+1}{1\le j\le m-1}}A(l_1,...,l_{m-1},0)B_u(l_1,...,l_{m-1},0)\nonumber\\
&=&-\sum_{d\ge 1}\sum_{\tb{l_j\ge -n+1}{1\le j\le d}}A(l_1,...,l_d, 0)E_u(l_1,...,l_d)
\end{eqnarray}
and
\begin{align*}
\delta_n^{(1)}:=\sum_{m\ge 1}\sum_{r\ge 0}\sum_{\tb{l_j\ge -n+1}{1\le j\le m}}\sum_{\tb{l_j\ge -n}{m+1\le j\le m+r}}
A(l_1,...,l_m)A(l_m,...,l_{m+r})E_u(l_1,...,l_{m+r})\\
=\sum_{\tb{d\ge 1}{1\le m\le d}}\sum_{\tb{l_j\ge -n+1}{1\le j\le m}}\sum_{\tb{l_j\ge -n}{m+1\le j\le d}}
A(l_1,...,l_m)A(l_m,...,l_{d})E_u(l_1,...,l_{d}) .
\end{align*}
Since the range of $l_j$, $1 \le j \le m$, and the one of $l_j$, $m+ 1 \le j \le d$, are different,
we split the latter sum into two parts
\begin{align*}
\delta_n^{(1)}=\sum_{d\ge 1}\sum_{\tb{l_j\ge -n+1}{1\le j\le d}}
\Big(\sum_{1\le m\le d}A(l_1,...,l_m)A(l_m,...,l_{d})\Big)E_u(l_1,...,l_{d})\\
+\sum_{\tb{d\ge 2}{1\le m\le d-1}}\sum_{\tb{l_j\ge -n+1}{1\le j\le m}}
\sum_{m+1\le k\le d}
\sum_{\tb{l_k=-n}{\tb{l_j\ge -n+1, m+1\le j<k}{l_j\ge -n, k<j\le d }}}
\!\!\!\!A(l_1,...,l_m)A(l_m,...,l_{d})E_u(l_1,...,l_{d}).
\end{align*}
By combining this with \eqref{eq:delta_n'} we conclude from \eqref{eq:delta_n+split} that
\begin{equation}\label{eq:delta_n''}
\delta_n=\sum_{d\ge 1}\sum_{\tb{l_j\ge -n+1}{1\le j\le d}}\D(l_1,...,l_d)E_u(l_1,...,l_d)+\RR_n(u)
\end{equation}
where by \eqref{eq:A},
\begin{align}\label{eq:D}
\D &(l_1,...,l_d):= \big( \sum_{1\le m\le d}A(l_1,...,l_m)A(l_m,...,l_{d}) \big)   -   A(l_1,...,l_d, 0) \nonumber\\
&=\sum_{1\le m\le d} \Big(\frac{1}{2\pi i}\oint_{\partial D_0}\frac{1}{\mu}\prod_{j=1}^m\frac{1}{l_j-\mu}\,d\mu\Big)
\Big(\frac{1}{2\pi i}\oint_{\partial D_0}  \frac{1}{\mu}\prod_{j=m}^d\frac{1}{l_j-\mu}\,d\mu\Big)\nonumber\\
& \qquad -\frac{1}{2\pi i}\oint_{\partial D_0}\frac{1}{\mu^2}\prod_{j=1}^d\frac{1}{l_j-\mu}\,d\mu,
\end{align}
 and where the remainder $\RR_n(u)$ equals
\begin{align}\label{eq:R_n}
\sum_{\tb{d\ge 2}{1\le m\le d-1}}\sum_{\tb{l_j\ge -n+1}{1\le j\le m}}
\sum_{m+1\le k\le d}
\sum_{\tb{l_k=-n}{\tb{l_j\ge -n+1, \, m+1\le j<k}{l_j\ge -n, \, k<j\le d}}}
\!\!\!\!A(l_1,...,l_m)A(l_m,...,l_{d})E_u(l_1,...,l_{d}).
\end{align}

We have the following important Vanishing Lemma.

\begin{Lem}\label{lem:D=0}
$\D(l_1,...,l_d)=0$ for any $l_1,...,l_d\in\Z$, $d \ge 1$.
\end{Lem}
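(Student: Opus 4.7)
The plan is to prove the equivalent identity $\sum_{m=1}^d A(l_1,\ldots,l_m)\,A(l_m,\ldots,l_d)=A(l_1,\ldots,l_d,0)$ by writing the two sides as (double) contour integrals and collapsing the inner sum via partial fractions. The key algebraic input is
\begin{equation*}
\frac{1}{(l_m-\mu)(l_m-\nu)}=\frac{1}{\nu-\mu}\Big(\frac{1}{l_m-\nu}-\frac{1}{l_m-\mu}\Big),
\end{equation*}
which is legitimate on the contour $\partial D_0\times\partial D_0$ because every $l_m\in\Z$ satisfies $l_m=0$ or $|l_m|\ge 1$, while $|\mu|=|\nu|=1/3$. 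Factoring the common factor $\frac{1}{(l_m-\mu)(l_m-\nu)}$ out of $\prod_{j=1}^m(l_j-\mu)^{-1}\prod_{j=m}^d(l_j-\nu)^{-1}$ and applying the identity converts the $m$-sum into a telescoping sum equal to
\begin{equation*}
\frac{1}{\nu-\mu}\Big[\prod_{j=1}^d\frac{1}{l_j-\nu}-\prod_{j=1}^d\frac{1}{l_j-\mu}\Big].
\end{equation*}
The apparent singularity on the diagonal $\mu=\nu$ is cancelled by the vanishing of the bracket there, so the combined integrand is jointly analytic in a neighbourhood of the diagonal.

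Next I would split the resulting double contour integral as $I_a-I_b$ according to the two terms in the bracket, and deform to the \emph{nested} contours $|\mu|=r$, $|\nu|=1/3$ with $0<r<1/3$. This deformation is legitimate because the combined integrand is pole-free along $\mu=\nu$, and the nesting ensures $\mu\ne\nu$ uniformly, so that each of $I_a$, $I_b$ becomes a well-defined iterated integral. For $I_a$, performing the $\mu$-integration first picks up only the simple pole at $\mu=0$ of $\frac{1}{\mu(\nu-\mu)}$, contributing $1/\nu$; the remaining $\nu$-integral is then precisely $A(l_1,\ldots,l_d,0)$ in the convention of \eqref{eq:D}. For $I_b$ the cleaner order is the opposite: since $|\mu|<|\nu|$, both simple poles of $\frac{1}{\nu(\nu-\mu)}$ at $\nu=0$ and $\nu=\mu$ lie inside the outer contour $|\nu|=1/3$, their residues $-1/\mu$ and $+1/\mu$ cancel, and $I_b=0$. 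Combining, $I_a-I_b=A(l_1,\ldots,l_d,0)$, i.e.\ $\D(l_1,\ldots,l_d)=0$.

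The main obstacle is to keep the argument uniform in $(l_1,\ldots,l_d)\in\Z^d$, in particular when several $l_j$ vanish so that $\prod_j(l_j-\mu)^{-1}$ acquires a pole at $\mu=0$ of order equal to the number of vanishing indices, and a brute-force residue calculation at that point becomes combinatorially awkward. The above choice of splitting is engineered precisely to avoid ever computing that residue: the vanishing $I_b=0$ comes from a cancellation in $\frac{1}{\nu(\nu-\mu)}$ that is independent of the $l_j$, while the evaluation of $I_a$ needs only the elementary residue $\mathrm{Res}_{\mu=0}\frac{1}{\mu(\nu-\mu)}=1/\nu$, with $\prod_j(l_j-\nu)^{-1}$ carried through as an inert prefactor. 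A final check is that the contour deformations never cross any point $\mu=l_j$ or $\nu=l_j$, which is automatic because the contours lie in the annulus $\{r\le|z|\le 1/3\}\subset(0,1)$ whereas integer points are either at the origin or at distance at least $1$ from it.
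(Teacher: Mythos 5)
Your proof is correct, and it takes a genuinely different --- and considerably shorter --- route than the paper's. The paper establishes the (slightly more general) identity for $(\ell_1,\dots,\ell_d)\in\big((\C\setminus\overline{D_0})\cup\{0\}\big)^d$ by brute force: after reducing by holomorphy to the case where the nonzero $\ell_k$ are pairwise distinct, each $A(\cdot)$ is evaluated via Cauchy's theorem and Leibniz's rule as a sum over compositions ${\bf q}$ of $|J|+1$ (with $J$ the set of vanishing indices), and the whole statement is reduced to the combinatorial identity $|K_{ad}({\bf q})|=|J_{ad}({\bf q})|+1$, which is then proved by a multi-page interleaving argument (properties (P1)--(P3)). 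Your argument replaces all of this by three elementary steps: the resolvent-type partial fraction identity applied to the shared factor $\frac{1}{(l_m-\mu)(l_m-\nu)}$, which telescopes the convolution sum $\sum_{m}A(l_1,\dots,l_m)A(l_m,\dots,l_d)$ into a single two-variable integrand; a deformation to nested contours (harmless, since the telescoped integrand is analytic across the diagonal and, even more simply, each individual $\mu$-contour may be shrunk to $|\mu|=r$ before telescoping, as $\frac{1}{\mu}\prod_j\frac{1}{l_j-\mu}$ is analytic on $0<|\mu|<1$); and the residue cancellation $\mathrm{Res}_{\nu=0}+\mathrm{Res}_{\nu=\mu}=-\frac1\mu+\frac1\mu=0$ that kills $I_b$, together with $\mathrm{Res}_{\mu=0}\frac{1}{\mu(\nu-\mu)}=\frac1\nu$ which identifies $I_a$ with the double-pole integral $\frac{1}{2\pi i}\oint_{\partial D_0}\frac{1}{\nu^2}\prod_{j=1}^d\frac{1}{l_j-\nu}\,d\nu$, i.e.\ exactly the term subtracted in the explicit formula \eqref{eq:D} (you are right to match against that explicit integral rather than against a literal reading of \eqref{eq:A} with last argument $0$, which would carry an extra sign from $\frac{1}{0-\mu}=-\frac1\mu$). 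A notable advantage of your route is that it treats repeated and vanishing $l_j$ uniformly, with no perturbation to distinct values and no case analysis; what the paper's proof offers in exchange is an explicit description of the coefficients in the partial-fraction expansion, which is not needed elsewhere. I verified the telescoping, the legitimacy of the contour deformation (all singularities $\mu=l_j\ne 0$ lie at distance at least $1$ from the origin, hence outside the annulus $r\le|z|\le 1/3$), and the two residue computations; the argument is complete.
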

\begin{proof}[Proof of Lemma \ref{lem:D=0}]
Let $d \ge 1$ be given. We show a slightly stronger result than the one
claimed by the lemma. Note that
$\D(l_1,...,l_d)$ is well defined for any
$(\ell_1, \dots \ell_d)\in (\C\setminus \overline{D_0})\cup \{ 0\}$. We prove that
\begin{equation}\label{identity for mathcal D}
\mathcal D(\ell_1,\dots ,\ell_d) = 0, \qquad \forall \, (\ell_1, \dots \ell_d)\in (\C\setminus \overline{D_0})\cup \{ 0\}.
\end{equation}
For any given $(\ell_1, \dots \ell_d)\in (\C\setminus \overline{D_0})\cup \{ 0\}$, set
$$
J:=\{ j\in \{1,\dots, d\}  \, \big| \, \ell_j=0\} ,\qquad  K:=\{ k\in \{1,\dots,d\} \,  \big| \, \ell_k\ne 0\}  .
$$
In the case where $K = \emptyset$, $I:=  \sum_{1\le m\le d}A(l_1,...,l_m)A(l_m,...,l_{d}) $
and  $II:=A^{(2)}(l_1,...,l_d)$ both vanish by the residue theorem and hence by the definition \eqref{eq:D},
the claimed identity \eqref{identity for mathcal D} holds.
For the remaining part of the proof we assume that $K \ne \emptyset$.
Notice that the terms $I$ and $II$ are holomophic functions of the 
variable $(\ell_k)_{k\in K}\in (\C\setminus \overline{D_0})^K $, so we may assume that the $\ell_k , k\in K$, are pairwise distinct
complex numbers in $\C\setminus \overline{D_0}$. By Cauchy's theorem  and Leibniz's rule, $A^{(2)}(\ell_1,\dots ,\ell_d)$ can be 
computed as
$$
\frac{(-1)^{|J|}}{(|J|+1)!}\Big[\partial_\mu^{|J|+1}\prod_{k\in K}\frac{1}{\ell_k-\mu}\Big]_{\vert \mu =0} 
=(-1)^{|J|} \sum_{{\bf q}\in Q(K,|J|+1)}\prod_{k\in K}\frac{1}{\ell_k^{q_k+1}} ,
$$
where for any integer $p \ge 1$, we set
$$
Q(K,p):=\Big\{ {\bf q} =  (q_k)_{k\in K}\in \Z_{\ge 0}^{K}\,\Big|\,\sum_{k\in K}q_k=p\Big\} .
$$
Given $m\in \{1,\dots ,d\}$, define
$$J_m:=J\cap [1,m],\ \   J'_m:=J\cap [m,d],
\qquad
K_m:=K\cap [1,m], \ \  K'_m:=K\cap [m,d].
$$
One then obtains, in a similar way,
$$
\begin{aligned}
A(\ell_1,\dots ,\ell_m)&=&(-1)^{|J_m|}\sum_{{\bf r}\in Q(K_m,|J_m|)}\prod_{k\in K_m}\frac{1}{\ell_k^{r_k+1}} ,\\
A(\ell_m,\dots ,\ell_d)&=&(-1)^{|J_m'|}\sum_{{\bf s}\in Q(K_m',|J_m'|)}\prod_{k\in K_m'}\frac{1}{\ell_k^{s_k+1}} .
\end{aligned}
$$
Since
\begin{equation}\label{J_m, J_m'}
|J_m|+|J_m'|=\begin{cases} |J|+1& {\rm if}\ m\in J \\
|J|& {\rm if}\ m\in K \end{cases} 
\end{equation}
one infers that $\sum_{m=1}^dA(\ell_1,\dots ,\ell_m)A(\ell_m,\dots, \ell_d)$ equals
$$
\begin{aligned}
&&(-1)^{|J|} \sum_{m\in K}\Big( \sum_{{\bf r}\in Q(K_m,|J_m|)}\prod_{k\in K_m}\frac{1}{\ell_k^{r_k+1}}\Big) 
\Big(\sum_{{\bf s}\in Q(K_m',|J_m'|)}\prod_{k\in K_m'}\frac{1}{\ell_k^{s_k+1}}\Big)\\
&&-(-1)^{|J|}\sum_{m\in J}\Big( \sum_{{\bf r}\in Q(K_m,|J_m|)}\prod_{k\in K_m}\frac{1}{\ell_k^{r_k+1}}\Big)
\Big(\sum_{{\bf s}\in Q(K_m',|J_m'|)}\prod_{k\in K_m'}\frac{1}{\ell_k^{s_k+1}}\Big) .
\end{aligned}
$$
For any $m\in \{1,\dots ,d\}$, ${\bf r}\in Q(K_m,|J_m|)$, and ${\bf s}\in Q(K_m',|J_m'|)$,
the corresponding term in the latter expression is of the form
$\prod_{k\in K}\frac{1}{\ell_k^{q_k+1}}$, where
$$
q_k :=r_k \ ( k<m) , \qquad q_k :=s_k \   ( k>m) ,
$$
and in case $m \in K$, $q_m :=r_m+s_m+1$. It then follows from \eqref{J_m, J_m'}
that $(q_k)_{k\in K}$ belongs to $Q(K,|J|+1)$. 
In order to describe $\sum_{m=1}^dA(\ell_1,\dots ,\ell_m)A(\ell_m,\dots, \ell_d)$ in more detail,
we define for ${\bf q}$ in $Q(K,|J|+1)$,
$$
\begin{aligned}
J_{ad}({\bf q})&:=\Big\{ m\in J \, \Big| \,  \sum_{k\in K_m}q_k=|J_m|\Big\} ,\\
\ K_{ad}({\bf q})&:= \Big\{m\in K \,\Big| \  \sum_{k\in K_m\setminus \{m\}}q_k\leq |J_m|\,,
\ \  \sum_{k\in K_m'\setminus \{m\}}q_k\leq |J_m'|\Big\}  .
\end{aligned}
$$
We remark that $J_{ad}({\bf q})$ might be empty and that by \eqref{J_m, J_m'},
for any $m\in J_{ad}({\bf q})$ and ${\bf q} \in Q(K,|J|+1)$,
the identity $\sum_{k\in K_m'}q_k=|J_m'|$ is automatically satisfied.

In view of these definitions, one has
$$
\begin{aligned}
\sum_{m=1}^d &A(\ell_1,\dots ,\ell_m)A(\ell_m,\dots, \ell_d) \\
& =  (-1)^{|J|}\sum_{{\bf q}\in Q(K,|J|+1)}
(|K_{ad}({\bf q})| -|J_{ad}({\bf q})|)\prod_{k\in K}\frac{1}{\ell_k^{q_k+1}}\ 
\end{aligned}
$$
and \eqref{identity for mathcal D} follows from the following combinatorial statement,
\begin{equation}\label{combi}
 |K_{ad}({\bf q})| =|J_{ad}({\bf q})|+1 , \qquad \forall  \, {\bf q}\in Q(K,|J|+1) .
\end{equation}
(Recall that we consider the case $(\ell_1, \dots \ell_d)\in (\C\setminus \overline{D_0})\cup \{ 0\}$ where $K \ne \emptyset$.)
To prove identity \eqref{combi}, let us fix  ${\bf q}\in Q(K,|J|+1) $. 
For notational convenience, we write $J_{ad}$, $K_{ad}$ instead of $J_{ad}({\bf q})$, $K_{ad}({\bf q})$. 
Furthermore, define 
$$S(E):=\sum_{k\in E}q_k\,, \qquad  E\subseteq K.$$

\smallskip

In a first step we prove that $K_{ad}\ne \emptyset$. (Recall that we assume $K \ne \emptyset$.)
Denote by $\underline m$ is the smallest element of $K$. 
Then $S(K_{\underline m}\setminus \{\underline m\}) =0\leq |J_{\underline m}|$.
Hence the largest number among all  $m$ in $K$, satisfying $S(K_m\setminus \{m\})\leq |J_m|$,
exists. We denote it by  $\overline m$ and claim that $\overline m \in K_{ad}$, i.e., that 
$ S(K'_{\overline m}\setminus \{ \overline m\}) \leq |J'_{\overline m}|$.
Indeed, this clearly holds if $\overline m$ is the largest element of $K$. 
Otherwise, let $j$ be the successor of $\overline m$ in $K$. Then
$$S(K_{\overline m })=S(K_j \setminus \{ j\})\geq |J_j|+1  ,$$
and  $S(K'_{\overline m}\setminus \{ \overline m\}) =(|J|+1)-S(K_{\overline m}) $ 
can be estimated  by \eqref{J_m, J_m'} as
$$
S(K'_{\overline m}\setminus \{ \overline m\}) \leq (|J|+1)-(|J_j|+1)=|J'_j|  \leq |J'_{\overline m}|  .
$$
We thus have proved that $\overline m \in K_{ad}$. Using the same type of arguments, one verifies
that the following properties are satisfied.
\begin{itemize}
\item[(P1)] $\forall \, n\in J_{ad}$, \ $\exists \, m_-, m_+\in K_{ad}$ with $m_- < n < m_+$. 
\item[(P2)] $\forall \, n_1,n_2\in J_{ad}$ with $n_1<n_2$, \ $\exists \, m\in K_{ad}$ with $n_1< m < n_2$. 
\item[(P3)] $\forall \,  m_1,m_2\in K_{ad}$ with $m_1 < m_2$, \ $\exists n \in J_{ad}$ with $m_1< n <m_2$. 
\end{itemize}
Property (P1) is proved by verifying that $m_-$, defined as the largest $m \in K$ satisfying $m < n $ and 
$S(K_m\setminus \{ m\})\leq |J_m|$, and that $m_+$, defined as the smallest $m \in K$ satisfying 
$m > n $ and $S(K_m'\setminus \{ m\} )\leq |J'_m|$, are both in $K_{ad}$. 
In more detail, one argues as follows. To prove that $m_- \in  K_{ad}$, 
recall that $\underline m \in K$, introduced in step 1, satisfies $S(K_{\underline m} \setminus \{ \underline m \}) = 0$.
Hence the largest number of all $m$ in $K$, satisfying $S(K_m\setminus \{m\}) \leq |J_m|$ and $m < n$, exists.
We denote it by $m_-$ and claim that $m_- \in  K_{ad}$. For this to be true, it remains to verify that
$S(K'_{ m_-} \setminus \{  m_- \})  \le |J_{m_-}'|$.  
If $m_-$ is the largest element in $K_n$, then
$$
S(K'_{ m_-} \setminus \{  m_- \}) = S(K'_n) = |J_n'| \le |J'_{m_-}|  ,
$$
implying that $m_- \in K_{ad}$.
Otherwise, let $j$ be the successor of $m_-$ in $K$. Since 
by the definition of $m_-$, $S(K_{m_-}) = S(K_j \setminus \{ j \} ) \ge |J_j| + 1$
one concludes
$$ 
S(K'_{ m_-} \setminus \{  m_- \}) = (|J| + 1) - S(K_{m_-})
\le  (|J| + 1) - ( |J_j| + 1) = | J'_j| \le |J'_{m_-}|  ,
$$
implying that $m_- \in K_{ad}$. (Here we used that by \eqref{J_m, J_m'}, $|J| - |J_j| = |J_j'|$.)

To prove that $m_+ \in K_{ad}$, recall that $\overline m$, introduced in step 1, is in $K_{ad}$. In particular, one has
$S(K'_{\overline m} \setminus \{ \overline m \}) \le |J'_{\overline m}|$.
Hence the smallest number among all $m$ in $K$, satisfying $S(K'_m\setminus \{m\}) \leq |J'_m|$ and $n < m$, exists.
We denote it by $m_+$. If $m_+$ is the smallest element in $K_n'$, then
$$
S(K_{ m_+} \setminus \{  m_+ \}) = S(K_n) = |J_n| \le |J_{m_+}| ,
$$
implying that $m_+  \in K_{ad}$.
Otherwise let $j$ be the predecessor of $m_+$ in K. By the definition of $m_+$,
$ S(K'_{ m_+})=S(K_{j} \setminus \{ j \}) \ge |J_j'| + 1$ and thus
$$
S(K_{ m_+} \setminus \{  m_+ \}) = (|J| + 1) -  S(K'_{ m_+}) \le  (|J| + 1) -   (|J'_j| + 1) \le |J_j| \le |J_{m_+}|,
$$
implying that $m_+ \in K_{ad}$ also in this case.

Similarly, (P2) is proved by verifying that $m$, defined as the largest $p \in K$ satisfying $n_1< p < n_2$ and $S(K_p \setminus \{ p\})\leq |J_p|$,
is in $K_{ad}$. In more detail, one argues as follows. First we verify that $(n_1, n_2)\cap K \ne \emptyset$. Indeed, otherwise one has
$$
|J| + 1 = S(K_{n_1}) +  S(K'_{n_1}) = S(K_{n_2}) +  S(K'_{n_1}) = |J_{n_2}| + |J'_{n_1}|,
$$
implying that $ |J| + 1 \ge |J_{n_1}| + 1 + |J'_{n_1}| = |J| + 2$, which is a contradiction.
Denote by $m_-$ the smallest element in $(n_1, n_2)\cap K$. Since
$S(K_{n_1}) = |J_{n_1}|$ it then follows that
$$
S(K_{ m_-} \setminus \{  m_- \}) = S(K_{n_1})  = |J_{n_1}| \le |J_{m_-}|.
$$
Hence the largest number among all $m$ in $K$, satisfying $S(K_m\setminus \{m\}) \leq |J_m|$ and $m < n_2$, exists.
We denote it by $m_+$ and claim that $m_+ \in  K_{ad}$. 
If $m_+$ is the maximal element in $K_{n_2}$, then
$$
S(K'_{ m_+} \setminus \{  m_+ \}) = S(K_{n_2})  = |J'_{n_2}| \le |J'_{m_+}|
$$
and hence $m_+ \in K_{ad}$. Otherwise, let $j$ be the successor of $m_+$ in $K_{m_2}$.
Then by the definition of $m_+$, $S(K_{ j} \setminus \{ j \}) \ge |J_j| + 1$. Therefore
$$
S(K'_{ m_+} \setminus \{  m_+ \}) = (|J| + 1) - S(K_{m_+})  
\le ( |J| + 1) - S(K_{j} \setminus \{j\})
$$
yields
$$
S(K'_{ m_+} \setminus \{  m_+ \})
\le (|J| + 1) - (|J_j| + 1) = |J'_j| 
\le |J'_{m_+}| .
$$
Hence also in this case, $m_+ \in K_{ad}$.

It remains to prove (P3). Let $m_1,m_2\in K_{ad}$ with $m_1<m_2$. First we observe that $(m_1,m_2)\cap J \ne  \emptyset$. 
Otherwise, $ |J_{m_2}| =  |J_{m_1}|$, implying that
$$
\begin{aligned}
|J|+1&=S(K_{m_1})+S(K_{m_1}'\setminus \{ m_1\})
\leq S(K_{m_2}\setminus \{ m_2\})+S(K_{m_1}'\setminus \{ m_1\})\\
&\leq  |J_{m_2}|+|J'_{m_1}|=|J_{m_1}|+|J_{m_1}'|=|J| , 
\end{aligned}
$$ 
which is a contradiction. Denote by  $\underline n$ the smallest element in $J$ which is larger than $m_1$. We then have three alternatives.
\begin{itemize}
\item[(A1)] $S(K_{\underline n})=|J_{\underline n}|$.  Then $\underline n\in J_{ad}$ and (P3) holds with $n:= \underline n$.
\item[(A2)] $S(K_{\underline n})\geq |J_{\underline n}|+1$. Since $\underline n \in J$ 
and $|J| + 1 - |J_{\underline n}| = |J'_{\underline n}|$ (cf. \eqref{J_m, J_m'}), one has
$$
S(K_{\underline n}') =  (|J|+1) - S(K_{\underline n}) \leq (|J|+1)-(|J_{\underline n}|+1)=|J'_{\underline n}|-1.
$$ 
Hence the largest number among the $n\in J\cap(m_1,m_2)$ satisfying $S(K_n')\leq |J'_n|$ exists. We denote it by $\overline n$. 
In the case $(\overline n, m_2)\cap J = \emptyset$, one has  $|J_{m_2}|=|J_{\overline n}|$ and hence
$$
S(K_{\overline n})\leq S(K_{m_2}\setminus \{ m_2\} )\leq |J_{m_2}|=|J_{\overline n}|
$$
and thus $\overline n \in J_{ad}$. 
In the case $(\overline n, m_2)\cap J \ne \emptyset$, denote by $j$ the smallest element of $J\cap(\overline n, m_2)$. 
Since by the definition of $\overline n$, $S(K'_j)\geq |J'_j|+1$ and  $|J|+1- |J'_j| = |J_j| = |J_{\overline n}| + 1$ one has
$$
S(K_{\overline n})\leq S(K_j)\leq (|J|+1)-(|J'_j|+1)=|J_j|-1=|J_{\overline n}|,$$
and we conclude again that $ \overline n \in J_{ad}$. 

\item[(A3)] $S(K_{\underline n})\leq |J_{\underline n}|-1$.  
We claim that this case does not occur. Indeed, 
$$
S(K'_{\underline n})\le S(K'_{m_1}\setminus \{m_1\}) \le |J'_{m_1}| = |J'_{\underline n}|.
$$
Since $ |J_{\underline n}| + |J'_{\underline n}| = |J|+1$ (cf. \eqref{J_m, J_m'}) it then follows that
$$
|J|+1=S(K_{\underline n})+S(K'_{\underline n})\le|J_{\underline n}|-1+|J'_{\underline n}|=|J|,
$$
which is a contradiction.
\end{itemize}
This proves (P3).

Finally, properties (P1), (P2), (P3) above together with the fact that $K_{ad}$ is not empty, imply identity \eqref{combi}.
\end{proof}


We are now ready to prove Proposition \ref{prop:delta_n-analyticity}.

\begin{proof} [Proof of Proposition \ref{prop:delta_n-analyticity}]
For any given $ -1/2 < s \le 0$, choose $U\equiv U^s$ as at the beginning of this section.
 Lemma \ref{lem:D=0} and \eqref{eq:delta_n''} imply that
for any $u\in U$,
\begin{equation}\label{eq:delta_n-final}
\delta_n(u)=\RR_n(u),\quad n\ge 1.
\end{equation}
Moreover, for any $n\ge 1$ and for any $u\in U$ we obtain from \eqref{eq:R_n} that
\begin{align*}
\sum_{n\ge 1}|\RR_n(u)|\le\sum_{n\ge 1}\sum_{\tb{d\ge 2}{1\le m\le d-1}}
\sum_{m+1\le k\le d}
\sum_{\tb{l_k=-n}{\tb{-\infty<l_j<\infty}{j\in\{1,...,d\}\setminus\{k\}}}}
\!\!\!\!\!\!|A(l_1,...,l_m)| |A(l_m,...,l_d)| |E_u|\\
\le\sum_{d\ge 2}\sum_{\tb{-\infty<l_j<\infty}{1\le j\le d}}
\Big(\sum_{1\le m\le d-1}(d-m)
|A(l_1,...,l_m)| |A(l_m,...,l_d)|\Big) |E_u|
\end{align*}
where we write $|E_u|$ for $|E_u(l_1,...,l_d)|$.
Proposition \ref{prop:delta_n-analyticity} now follows from \eqref{eq:delta_n-final} and Lemma \ref{lem:R_n-convergence}, 
Lemma \ref{lem:l^1-analyticity}, stated below.
\end{proof}

\begin{Lem}\label{lem:R_n-convergence}
For any $-1/2 < s \le 0$ there exists an open neighborhood $ U^s$ of zero in $H^s_{c,0}$ and a constant $C>0$ so that 
for any $u\in U^s$
\begin{equation}\label{eq:R_n-convergence}
\sum_{d\ge 2}\sum_{\tb{-\infty<l_j<\infty}{1\le j\le d}}
\Big(\sum_{1\le m\le d-1}(d-m)
|A(l_1,...,l_m)| |A(l_m,...,l_d)|\Big) |E_u|\le C\|u\|_{s}^3.
\end{equation}
\end{Lem}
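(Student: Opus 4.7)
The plan is to explicitly bound the $A$-factors and then control the iterated convolution sum by repeated application of Lemma \ref{new version map Q}, summing finally over $d \ge 2$ as a geometric-type series in $\|u\|_s$. First I would bound $|A|$ pointwise. Parametrising $\mu \in \partial D_0$ with $|\mu| = 1/3$ and using that $|l-\mu| \ge (|l|+1)/5$ for every $l \in \Z$ (trivial for $l=0$ and a short check for $|l|\ge 1$), the contour integral \eqref{eq:A} gives
\[
|A(l_1,\dots,l_m)| \le \frac{5^m}{\prod_{j=1}^m(|l_j|+1)}, \qquad \forall \, l_1,\dots,l_m\in \Z .
\]
Since $|l_m|+1$ is counted in both factors, $|A(l_1,\dots,l_m)|\,|A(l_m,\dots,l_d)| \le \frac{5^{d+1}}{(|l_m|+1)\prod_{j=1}^d(|l_j|+1)}$, so it suffices to estimate
\[
S_{d,m} := \sum_{l_1,\dots,l_d\in \Z}\frac{|E_u(l_1,\dots,l_d)|}{(|l_m|+1)\prod_{j=1}^d(|l_j|+1)} .
\]

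Next, set $a_k := |\hu(k)|$ and $\bar a_k := a_{-k}$, so that $\|a\|_{\h^{s}_c} = \|\bar a\|_{\h^{s}_c} = \|u\|_s$. I would evaluate $S_{d,m}$ by summing $l_d, l_{d-1}, \dots, l_2$ successively. Each such sum has the form $\sum_{l_j}(a_{l_j-l_{j-1}}/(|l_j|+1))\,g(l_j) = Q(\bar a)[g](l_{j-1})$, so by Lemma \ref{new version map Q} it multiplies the $\h^{s}_c$-norm by $C_s\|u\|_s$; the extra factor $(|l_m|+1)^{-1}$ is absorbed into the incoming sequence before summing $l_m$, which is an $\h^{s}_c$-contraction. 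After $d-1$ such steps one obtains a sequence $F(l_1)$ with $\|F\|_{\h^{s}_c}\le C_s^{d-1}\|u\|_s^d$. The final summation $\sum_{l_1}(a_{l_1}/(|l_1|+1))\,F(l_1)$ (where $F$ may already carry an extra $(|l_1|+1)^{-1}$ if $m=1$) is then estimated by Cauchy--Schwarz in weighted $\ell^2$:
\[
\Big|\sum_{l_1}\frac{a_{l_1}}{|l_1|+1}F(l_1)\Big| \le \Big(\sum_{l}\frac{a_l^2\,\langle l\rangle^{-2s}}{(|l|+1)^2}\Big)^{1/2}\|F\|_{\h^{s}_c} .
\]
Writing $a_l^2\,\langle l\rangle^{-2s} = (a_l^2\,\langle l\rangle^{2s})\cdot\langle l\rangle^{-4s}$ and observing that $\sup_l\langle l\rangle^{-4s}/(|l|+1)^2<\infty$ for any $s>-1/2$ (the exponent $-4s-2$ is negative), the first factor is $\le K_s\|u\|_s$. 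Hence
\[
S_{d,m} \le K_s\,C_s^{d-1}\,\|u\|_s^{d+1} .
\]

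Summing $\sum_{m=1}^{d-1}(d-m) = \binom{d}{2}$ and multiplying by the $5^{d+1}$ coming from the $A$-bounds, the LHS of \eqref{eq:R_n-convergence} is dominated by
\[
\sum_{d\ge 2}\binom{d}{2}\,5^{d+1}K_s C_s^{d-1}\|u\|_s^{d+1} = \|u\|_s^3\cdot\sum_{d\ge 2}\binom{d}{2}\,5^{d+1}K_s C_s^{d-1}\|u\|_s^{d-2} ,
\]
which is bounded by $C\|u\|_s^3$ once $\|u\|_s$ is small enough for the geometric factor $(5C_s\|u\|_s)^{d-2}$ to sum; this determines the neighbourhood $U^s$ on which the estimate is to hold. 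The one step requiring genuine care is the repeated use of Lemma \ref{new version map Q}, which is precisely where the assumption $s>-1/2$ enters in an essential way (and, together with it, the boundedness of $\sup_l\langle l\rangle^{-4s}/(|l|+1)^2$ used in the last Cauchy--Schwarz); everything else is contour bookkeeping and a final geometric summation.
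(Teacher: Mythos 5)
Your argument is correct and follows essentially the same route as the paper: the same pointwise bound $|A(l_1,\dots,l_m)|\le 5^m\prod_{j=1}^m(|l_j|+1)^{-1}$ coming from $|l-\mu|\ge(|l|+1)/5$ on $\partial D_0$, iterated application of Lemma \ref{new version map Q} to the convolution chain $E_u$, and a final geometric summation over $d$ after shrinking the neighborhood. The only (immaterial) difference is that the paper cuts the chain at $l_m$, estimates the two halves in $\h^{s+1}$ and pairs them there, whereas you run a single pass from $l_d$ down to $l_2$ and close with a weighted Cauchy--Schwarz at $l_1$; both closing steps exploit $s>-1/2$ in the same way.
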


\begin{proof}[Proof of Lemma \ref{lem:R_n-convergence}]
Let $U \equiv U^s$ be an open neighborhood of zero in $H^{s}_{c,0}$ so that the statement of 
Proposition \ref{prop:L-local} holds. Let $d\ge 2$ and any $1\le m\le d-1$. 
By \eqref{estimate l_j - mu}, one has
$$
| A(l_1,...,l_m | \le 5^m \prod_{j=1}^m \frac{1}{|l_j| + 1} , \qquad
| A(l_m,...,l_d | \le 5^{d -m+1} \prod_{j=m}^d \frac{1}{|l_j| + 1} .
$$
Hence $|A(l_1,...,l_m)| |A(l_m,...,l_d)| |E_u| \le 5^{d+1} F_u(l_1,...,l_m) F'_u(l_m,...,l_d )$ 
where
$$
F_u( l_1,...,l_m) := \frac{1}{|l_m| + 1} \frac{|\widehat u(l_m - l_{m-1})|}{|l_{m-1}| + 1} \cdots  
\frac{|\widehat u(l_2 - l_{1}|)}{|l_{1}| + 1} |\widehat u( l_{1})| ,
$$
$$
F_u'( l_m,...,l_d) :=  \frac{1}{| l_m | + 1} \frac{|\widehat u( - (l_{m} - l_{m+1}) )}{| l_{m+1}| + 1} \cdots  
\frac{|\widehat u( -(l_{d-1} - l_d))}{| l_{d}| + 1} |\widehat u( - l_d)| .
$$
By Lemma \ref{new version map Q} it then follows that for any $u \in U$
$$
\big\|\sum_{\tb{-\infty<l_j<\infty}{1\le j\le m-1}} F_u( l_1,...,l_m)\big\|_{s+1} \le C_s^{m-1} \|u\|_s^m, 
$$
$$
\big\|\sum_{\tb{-\infty<l_j<\infty}{m+ 1 \le j\le d}} F'_u( l_m,...,l_d)\big\|_{s+1} \le C_s^{d - m} \|u\|_s^{d-m +1}
$$
where $C_s \ge 1$ is the constant of Lemma \ref{new version map Q}. One then concludes that
the left hand side of the inequality in \eqref{eq:R_n-convergence} is bounded by 
$$
\sum_{d\ge 2} 5^{d+1} d^2 C_s^{d-1} \|u \|_s^{d+1} \le \sum_{d\ge 2} ( C_s' \|u\|_s)^{d+1}
$$
where the constant $C_s'$ satisfies $5^{d+1} d^2 C_s^{d-1} \le (C_s')^{d+1}$. By shrinking $U$, if needed, 
the claimed estimate \eqref{eq:R_n-convergence} follows.

\end{proof}

\begin{Lem}\label{lem:l^1-analyticity}
Let $U$ be an open neighborhood in a complex Banach space $X$. Assume that the map 
$f : U\to\ell^1_+$, $x\mapsto\big(f_n(x)\big)_{n\ge 1}$ is locally bounded and for any 
$n\ge 1$ the ``coordinate'' function $f_n : X\to\C$ is analytic. Then $f : X\to\ell^1_+$ is analytic.
\end{Lem}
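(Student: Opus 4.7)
The plan is to invoke the standard characterization of analyticity for maps into a complex Banach space (cf. e.g. \cite[Theorem A.3]{KP-book}): a locally bounded map $f : U \to Y$ is analytic if and only if it is weakly analytic, i.e. $\ell \circ f : U \to \C$ is analytic for every $\ell \in Y^*$. Since local boundedness is assumed, it suffices to verify weak analyticity. Identifying $(\ell^1_+)^* \cong \ell^\infty_+$ via the pairing $\ell_a(z) := \sum_{n \ge 1} a_n z_n$ for $a = (a_n)_{n \ge 1} \in \ell^\infty_+$, the task reduces to showing that for every such $a$, the function
\[
g(x) := \sum_{n \ge 1} a_n f_n(x), \qquad x \in U,
\]
is analytic on $U$.

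Fix $x_0 \in U$ and choose an open ball $B \subseteq U$ centered at $x_0$ together with $M > 0$ such that $\| f(x) \|_{\ell^1_+} \le M$ for all $x \in B$. The partial sums $S_N(x) := \sum_{n=1}^N a_n f_n(x)$ are analytic on $U$ as finite linear combinations of the analytic coordinate functions, they satisfy the uniform bound $|S_N(x)| \le \| a \|_{\ell^\infty_+} M$ on $B$, and since $\sum_{n} |a_n f_n(x)| \le \| a \|_{\ell^\infty_+} \| f(x) \|_{\ell^1_+}$, they converge pointwise on $B$ to $g$.

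To promote this to analyticity of $g$, I would restrict to complex lines: for any $v \in X$ of sufficiently small norm that $x_0 + z v \in B$ whenever $|z| \le 1$, the scalar maps $z \mapsto S_N(x_0 + z v)$ are holomorphic on the closed unit disk, uniformly bounded by $\| a \|_{\ell^\infty_+} M$, and converge pointwise to $z \mapsto g(x_0 + z v)$. By Vitali's theorem for one-variable holomorphic functions, the limit is holomorphic on the open unit disk. Hence $g$ is G\^ateaux-analytic on $B$, and combined with local boundedness (hence continuity), $g$ is analytic on $B$; as $x_0 \in U$ was arbitrary, $g$ is analytic on $U$. This establishes the weak analyticity of $f$, and the characterization above yields analyticity of $f : U \to \ell^1_+$.

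The only delicate point is the appeal to the characterization \textquotedblleft weakly analytic $+$ locally bounded $\Rightarrow$ analytic\textquotedblright\ in the first step; once this is granted, the proof is a routine Vitali-on-complex-lines argument, and no genuine combinatorial or analytic obstacle arises. One could alternatively bypass Vitali by using Cauchy estimates on the Taylor coefficients of each $f_n$ at $x_0$ to exhibit directly a convergent power-series expansion of $g$, but the weak-analyticity route is cleaner and invokes only standard tools already referenced in the paper.
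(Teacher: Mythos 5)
Your proposal is correct and follows essentially the same route as the paper: reduce to weak analyticity via the ``locally bounded $+$ weakly analytic $\Rightarrow$ analytic'' criterion from \cite[Appendix A]{KP-book}, identify $(\ell^1_+)^*$ with $\ell^\infty_+$, and apply a normal-families argument to the uniformly bounded partial sums restricted to complex lines. The only cosmetic difference is that you invoke Vitali's theorem where the paper uses Montel plus Weierstrass; these are interchangeable here, and your explicit remark that the partial sums converge pointwise (needed to identify the limit) is if anything slightly more careful than the paper's phrasing.
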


\begin{proof}[Proof of Lemma \ref{lem:l^1-analyticity}]
The lemma follows if we prove that $f : U\to\ell^1_+$ is weakly analytic (cf. \cite[Appendix A]{KP-book}). 
To this end, 
fix $x_0\in U$, $h\in X$, $\|h\|_X=1$, and $c=(c_n)_{n\ge 1}\in\ell^\infty_+ \equiv\big(\ell^1_+\big)^*$.
For any $\lambda \in \C$ with $|\lambda|<r$ and $r>0$ sufficiently small, 
consider the complex valued function
\[
g(\lambda):= \sum_{n\ge 1}c_n f_n(x_0+\lambda h)
\]
and the partial sums $g_N(\lambda):=\sum_{n=1}^N c_n f_n(x_0+\lambda h)$, $N\ge 1$.
Since $f : U\to\ell^1_+$ is locally bounded, there exist $M>0$, $r>0$ so that 
for any $|\lambda|<r$ and  $N\ge 1$,
\[
|g_N(\lambda)|\le\|c\|_{\ell^\infty_+} \|g(x_0+ \lambda h)\|_{\ell^1_+} \le M.
\]
Hence, the holomorphic functions $(g_N)_{N\ge 1}$ are uniformly bounded in 
the disk $D_0(r) = \{\lambda\in\C \, \big| \, |\lambda|<r\}$. By Montel's theorem, there exists a subsequence $(g_{N_k})_{k\ge 1}$ 
of  $(g_N)_{N\ge 1}$, converging to $g$ uniformly on any compact subset of $D_0(r)$. Hence, by the Weierstrass theorem, 
$g$ is holomorphic. Since $x_0 \in U$, $h \in X$ with $\|h\|_X =1$, and $(c_n)_{n \ge 1} \in \ell^\infty_+$ 
are arbitrary, $f $ is weakly analytic.
\end{proof}

\section{Proof of Theorem \ref{th:Phi} and Theorem \ref{th:well-posedness}}\label{proof main results}
In this section we prove Theorem \ref{th:Phi} and Theorem \ref{th:well-posedness}, stated
in Section \ref{Introduction}.

\begin{proof}[Proof of Theorem \ref{th:Phi}]
Recall from Proposition \ref{prop:Phi-analytic} that for any $s>-1/2$, there exists a neighborhood $U \equiv U^s$ of zero in 
$H^s_{c,0}$ so that the Birkhoff map extends from $U\cap H^s_{r,0}$ to an analytic map  
\begin{equation}\label{eq:Phi-modified1}
\Phi : U\to\h^{\frac{1}{2}+s}_{c,0},\quad
u\mapsto\Big(\big(\Phi^*_{-n}(u)\big)_{n\le -1},\big(\Phi_n(u)\big)_{n\ge 1}\Big)
\end{equation}
where, by \eqref{eq:Phi2},
\begin{equation}\label{eq:Phi2'}
\Phi_n(u)=\sqrt{n}\frac{a_n^*(u)}{\sqrt[+]{n\kappa_n(u)}}\Psi^*_n(u),\quad n\ge 1,
\end{equation}
with $\Psi : U\to\h^{1+s}_+$ being the pre-Birkhoff map  \eqref{eq:Psi}, studied in 
Section \ref{sec:Psi}. 
Let us compute the differential $d_0\Phi$ of $\Phi$ at $u=0$. To this end note that
the differential of $\Psi$ at $u=0$ is given by the weighted 
Fourier transform (cf. \eqref{eq:Psi-linear_term}),
$d_0\Psi : H^s_{c,0}\to\h^{1+s}_+$, $u\mapsto\left(-\frac{\hu(-n)}{n}\right)_{n\ge 1}$.
Hence, in view of \eqref{eq:F*},
\begin{equation}
d_0\Psi^* : H^s_{c,0}\to\h^{1+s}_+,\quad u\mapsto\left(-\frac{\hu(n)}{n}\right)_{n\ge 1}\,.
\end{equation}
Recall from Remark \ref{rem:normalizations} that for $u=0$ one has
\begin{equation}\label{eq:kappa_at_zero}
\kappa_0(0)= 1 , \ \  n\kappa_n(0)=1, \ n\ge 1, \qquad
f_n(0) = e_n, \ \ h_n(0)=e_n, \ \ \ n\ge 0.
\end{equation}
The definition \eqref{eq:f<->h} of $a_n(u)$ then implies that for $u=0$,
\begin{equation}\label{eq:a_at_zero}
a^*_n(0)=\overline{a_n(0)}=1,\qquad n\ge 1.
\end{equation}
From \eqref{eq:Phi-modified1}--\eqref{eq:a_at_zero}, the fact that $\Psi_n(u)=0$, $n\ge 1$, and Leibniz's rule 
one then infers that
\begin{equation}\label{eq:d_Phi}
d_0\Phi : H^s_{c,0}\to\h^{\frac{1}{2}+s}_{c,0},\quad
u\mapsto\left(-\frac{\hu(n)}{\sqrt{|n|}}\right)_{n\in\Z\setminus\{0\}}\,.
\end{equation}
Hence,  being a weighted Fourier transform, $d_0\Phi : H^s_{c,0}\to\h^{\frac{1}{2}+s}_{c,0}$ 
is a linear isomorphism of complex linear spaces.
Moreover, $d_0\Phi\big|_{H^s_{r,0}}  : H^s_{r,0}\to\h^{\frac{1}{2}+s}_{r,0}$ is a linear 
isomorphism of the corresponding real subspaces. Theorem \ref{th:Phi} now follows from the inverse function theorem 
in Banach spaces together with the fact that $\Phi\big|_{H^s_{r,0}} : H^s_{r,0}\to\h^{\frac{1}{2}+s}_{r,0}$
is a homeomorphism (cf.  \cite[Theorem 6]{GKT1}).
\end{proof}

Let us now turn to the proof of Theorem \ref{th:well-posedness}.  To prove item (i), we first need to make some preliminary considerations.
Recall that  the Benjamin-Ono equation is globally well-posed in $H^s_{r,0}$ for any 
$s > -1/2$ (cf. \cite{GKT1} for details and references). For any given $t \in \mathbb R$,  denote by $\mathcal S^t_0$ the 
flow map of the Benjamin-Ono equation on $H^s_{r,0}$, $\mathcal S^t_0 : H^s_{r,0} \to H^s_{r,0}$ and by 
$\mathcal S^t_{B}: \h^{1/2+s}_+ \to \h^{1/2+s}_+$ the version of $\mathcal S^t_0$ obtained, when expressed in 
the Birkhoff coordinates $(\zeta_n)_{n \ge 1}$ (cf. Remark \ref{rem:Birkhoff_maps}). 
To describe $\mathcal S^t_{B}$ more explicitly, recall that the nth frequency $\omega_n$, $n \ge 1$, of the Benjamin-Ono 
equation is the real valued, affine function defined on $\h^{s+1/2}_+$ (cf. \cite{GK}, \cite{GKT1}),
\begin{equation}\label{eq:formula_nth_frequency}
\omega_n(\zeta) = n^2 -2\sum_{k=1}^{n} k |\zeta_k|^2
- 2n\sum_{k=n+1}^{\infty} |\zeta_k|^2\, .
\end{equation}
For any initial data  $\zeta(0) \in \h^{1/2 + s}_+$, $ s > -1/2$, $\mathcal S^t_B(\zeta(0))$ is given by
\begin{equation}\label{formula for S_B}
\mathcal S^t_B(\zeta(0)):= \big( \zeta_n(0)e^{it\omega_n(\zeta(0))} \big)_{n \ge 1} \, .
\end{equation}
The key ingredient into the proof of Theorem \ref{th:well-posedness} (i)
is a corresponding result for the flow map $\mathcal S^t_B$. 
More precisely, one has the following
\begin{Lem}\label{lem:nowhere locally uniformly continuous}
For any $t \ne 0$ and any $ - 1/2 < s < 0$, 
$\mathcal S^t_B : \h^{1/2+s}_+ \to \h^{1/2+s}_+$
is nowhere locally uniformly continuous. In particular,
it is {\em not} locally Lipschitz.
\end{Lem}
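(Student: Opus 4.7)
The plan is to exploit the explicit formula \eqref{formula for S_B}: since $\mathcal{S}^t_B$ acts componentwise by $\zeta_n \mapsto \zeta_n e^{it\omega_n(\zeta)}$, uniform continuity can only fail through the phase, so I would produce, in any given open set $V$, two points very close in $\h^{1/2+s}_+$-norm whose $N$-th frequencies $\omega_N$ differ by a bounded amount for some high index $N$, giving a macroscopic output perturbation at that index. The construction exploits the fact that for $M<N$, the derivative of $\omega_N$ with respect to $|\zeta_M|^2$ equals $-2M$ (cf.\ \eqref{eq:formula_nth_frequency}), a Lipschitz constant which the $\h^{1/2+s}$-norm cannot control when $s<0$.

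First, I would fix $t\ne 0$, $-1/2<s<0$, an open $V\subset\h^{1/2+s}_+$, a base point $\zeta^{(0)}\in V$, and $\epsilon_0>0$ with $B(\zeta^{(0)},\epsilon_0)\subset V$; note $\zeta^{(0)}\in \h^{1/2+s}_+$ implies $n^{(1+2s)/2}|\zeta^{(0)}_n|\to 0$ as $n\to\infty$. For large integers $M<N$ (to be sent to infinity along subsequences), and with $\phi:=\arg\zeta^{(0)}_M$ (chosen arbitrarily if $\zeta^{(0)}_M=0$), I would set
\[
\xi_{M,N}:=\zeta^{(0)}+re^{i\phi}e_M+ae_N,
\qquad
\eta_{M,N}:=\zeta^{(0)}+(r+h)e^{i\phi}e_M+ae_N,
\]
with $e_k$ the $k$-th standard unit vector, $r:=\epsilon M^{-(1+2s)/2}$, $a:=\epsilon N^{-(1+2s)/2}$, $\epsilon\in(0,\epsilon_0/\sqrt{2})$ fixed small, and $h\in\R$ to be determined. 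Aligning the perturbations with $\zeta^{(0)}_M$ gives $|\zeta^{(0)}_M+re^{i\phi}|^2=(|\zeta^{(0)}_M|+r)^2$, so that by \eqref{eq:formula_nth_frequency},
\[
\omega_N(\xi_{M,N})-\omega_N(\eta_{M,N})=-2M\bigl[(|\zeta^{(0)}_M|+r)^2-(|\zeta^{(0)}_M|+r+h)^2\bigr]=4M(|\zeta^{(0)}_M|+r)h+2Mh^2.
\]
Since $Mr^2\gtrsim\epsilon^2 M^{-2s}\to\infty$, the intermediate value theorem provides, for $M$ large, a solution $h=h(M,N)$ of $t(\omega_N(\xi_{M,N})-\omega_N(\eta_{M,N}))=-\pi$ with $|h|\sim\pi/(4|t|Mr)=C_{t,\epsilon}\,M^{(2s-1)/2}$.

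It then remains to compare norms. The input diameter is
\[
\|\xi_{M,N}-\eta_{M,N}\|_{\h^{1/2+s}_+}=M^{(1+2s)/2}|h|=O(M^{2s})\longrightarrow 0 \quad (M\to\infty),
\]
since $s<0$, while the $N$-th coordinate of $\mathcal S^t_B(\xi_{M,N})-\mathcal S^t_B(\eta_{M,N})$ equals $(\zeta^{(0)}_N+a)\bigl(e^{it\omega_N(\xi_{M,N})}-e^{it\omega_N(\eta_{M,N})}\bigr)$ and thus has modulus $2|\zeta^{(0)}_N+a|\ge a$ once $|\zeta^{(0)}_N|\le a/2$ (which holds for $N$ large because $N^{(1+2s)/2}|\zeta^{(0)}_N|\to 0$). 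Hence $\|\mathcal S^t_B(\xi_{M,N})-\mathcal S^t_B(\eta_{M,N})\|_{\h^{1/2+s}_+}\ge a\,N^{(1+2s)/2}=\epsilon$, and taking $M_k,N_k\to\infty$ appropriately produces two sequences in $V$ witnessing failure of uniform continuity; non-Lipschitzness then follows a fortiori. The hard step will be the simultaneous balancing of three competing scales: $r$ must be small enough ($r\lesssim M^{-(1+2s)/2}$) to keep $\xi_{M,N}$ inside $V$, yet large enough that the tiny secondary perturbation $h$ gets amplified by the factor $M$ appearing in $\partial\omega_N/\partial|\zeta_M|^2$, while the input diameter $M^{(1+2s)/2}|h|$ is forced to zero and the output diameter $aN^{(1+2s)/2}$ is held at $\epsilon$. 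These constraints reduce to $M^{2s}\to 0$, which is precisely feasible because $s<0$ — reflecting the failure, anticipated in the remark following Theorem~\ref{th:well-posedness}, of the frequencies $\omega_n$ to be locally Lipschitz uniformly in $n$ on $\h^{1/2+s}_+$.
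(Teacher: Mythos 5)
Your argument is correct, and it rests on the same underlying mechanism as the paper's proof --- in \eqref{eq:formula_nth_frequency} the coefficient $2k$ of $|\zeta_k|^2$ beats the weight $k^{1+2s}$ when $s<0$, so the frequencies fail to be Lipschitz uniformly in $n$ --- but the implementation is genuinely different. The paper perturbs a \emph{single} mode $m$ (after replacing the base point by a nearby finitely supported one), moving $\zeta_m$ in the direction $i m^{s/2}\zeta_m$ so that the action $|\zeta_m|^2$ changes only at second order; the resulting phase shift $2\delta^2 m^{-s}|t|$ is \emph{large}, and a pigeonhole argument (consecutive values of $n^{|s|}$ differ by at most $|s|<1/2$, so a subsequence lands within $1/2$ of the odd integers) guarantees the phase difference is bounded away from $2\pi\Z$. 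You instead separate the driving mode from the observed mode: a radial perturbation of size $h$ at mode $M$ shifts $\omega_N$ at \emph{first} order ($\approx 4Mrh$), so the intermediate value theorem lets you tune $h$ to make the phase difference exactly $\pi$ (the quadratic correction $2Mh^2=O(1/(Mr^2))=O(M^{-2s}\cdot M^{4s})=O(M^{2s})$ being negligible), and you then read the output at mode $N$, where the planted amplitude $a=\epsilon N^{-(1+2s)/2}$ dominates $|\zeta^{(0)}_N|$ for $N$ large because $n^{1/2+s}|\zeta^{(0)}_n|\to 0$ for every element of $\h^{1/2+s}_+$. Your route dispenses with the reduction to finitely supported base points and with the mod-$2\pi$ equidistribution step, at the cost of juggling the three scales $r$, $a$, $h$; the key balances $M^{1/2+s}|h|=O(M^{2s})\to 0$ for the input and $N^{1/2+s}a=\epsilon$ for the output are exactly right. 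The only cosmetic point is that the opening assertion that uniform continuity ``can only fail through the phase'' is an unneeded heuristic; the single-coordinate lower bound at index $N$ is all the proof uses.
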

\begin{proof}[Proof of Lemma \ref{lem:nowhere locally uniformly continuous}]
We argue as in the proof of a corresponding result for the KdV equation in \cite[Theorem 3.10]{KM1}.
Let $U \equiv U^s$ be an arbitrary non-empty open subset of $\h^{1/2+s}_+$
with $-1/2  < s < 0$ and $t \ne 0$.
Choose $\zeta^{(0)} \in U$ so that there exists $N\ge 1$
with the property that $\zeta^{(0)}_n = 0$ for any
$n > N$. For any $\delta > 0$ and $m > N,$
let
$\zeta^{(m, \delta)} : = (\zeta_n^{(m, \delta)})_{n \ge 1}$ and
$\xi^{(m, \delta)} : = (\xi_n^{(m, \delta)})_{n \ge 1}$,
where
$$
\zeta_n^{(m, \delta)} = \zeta_n^{(0)}\, , \quad
\forall n \ne m, 
\qquad \zeta_m^{(m, \delta)} = \frac{\delta}{m^{1/2 + s}}\, ,  \qquad \quad
$$ 
and 
$$
\xi_n^{(m, \delta)} = \zeta_n^{(0)}\, , \quad
\forall n \ne m, \qquad 
\xi_m^{(m, \delta)} = \frac{\delta(1 + i m^{ s/2})}{m^{1/2 + s}} \,  . 
$$
Then
$$
\big\|\zeta^{(m, \delta)}-\zeta^{(0)}\big\|_{\h^{1/2 + s}_+} 
=\delta\, , \qquad  
\big\|\xi^{(m, \delta)}-\zeta^{(0)}\big\|_{\h^{1/2 + s}_+} 
=\delta\sqrt{1+m^{s}} \, .
$$
Choose $\delta_0 > 0$ so small that 
$\zeta^{(m, \delta)}$ and 
$\xi^{(m, \delta)}$ are elements of $U$
for any $m > N$ and $0 < \delta \le  \delta_0$.
Furthermore one has for any $m > N,$
\begin{equation}\label{difference initial data}
\big\|\zeta^{(m, \delta)}-\xi^{(m, \delta)}\big\|_{\h^{1/2 + s}_+}
=\delta\, m^{s/2}\, , 
\end{equation}
and by \eqref{eq:formula_nth_frequency},
$$
\big|\omega_m( {\zeta^{(m, \delta)}})-\omega_m( {\xi^{(m, \delta)}} )\big| = 
2m \frac{\delta^2 m^{s}}{m^{1+2s}}=\frac{1}{m^s} 2\delta^2\, .
$$
For any given $t \ne 0$, choose an integer $k \ge 1$ so large that 
$$
\delta \equiv \delta(t):=\left(\frac{\pi k^s}{2|t|}\right)^{1/2}  <  \delta_0 \, 
$$
and hence $2|t|\delta^2=\pi k^{s}$. It then follows that
$$
\big|\omega_m( {\zeta^{(m, \delta)}} ) t -\omega_m( {\xi^{(m, \delta)}} ) t\big|
=\left(\frac{ k}{m}\right)^{s}\pi .
$$
Since $0 < |s| < 1/2$ and
$$
\big|(m + 1)^{|s|} - m^{|s|}\big| \le |s| \int_m^{m+1} 
\frac{1}{x^{1 -|s|}} dx \le |s| ,
$$
there exists a subsequence $(m_j)_{j \ge 1}$ 
of the sequence $(k n)_{n \ge N+1}$ so that
(with $\mathbb N = \{ 1, 2, 3, \ldots  \}$),
$$
\text{\rm dist}\left(\left(\frac{k}{m_j}\right)^{s}, \, 2\mathbb N-1\right)<1/2,\qquad \forall j \ge 1\, .
$$
Since for any $x \in \R$ with ${\rm{dist}}( x,\, 2\mathbb N - 1) < 1/2$
one has $\big|\exp(\pm ix\pi)-1\big|>1$, one concludes that
$$
\big|\exp\big(i t\omega_{m_j}({\zeta^{(m_j, \delta)}})-i t\omega_{m_j}({\xi^{(m_j,\delta)}})\big)-1\big|>1
$$
and hence
\begin{align}
& m_j^{1/2 + s} | \xi_{m_j}^{(m_j, \delta)} | \cdot\big| \exp\big( it \omega_{m_j}( \zeta^{(m_j, \delta)} )\big) -
\exp\big( it \omega_{m_j}( \xi^{(m_j, \delta) } ) \big)\big|\nonumber\\
& = m_j^{1/2 + s}
|\xi_{m_j}^{(m_j, \delta)}|\cdot\big|\exp\Big( it\big(\omega_{m_j}( {\zeta^{(m_j, \delta)}} ) - 
\omega_{m_j}( {\xi^{(m_j, \delta)}} ) \big) \Big) - 1\big|   \nonumber  \\
& \ge \sqrt{ 1 + m_j^{s}} \, \delta  \, . \label{estimate 1}
\end{align}
In view of the estimate (cf. \eqref{difference initial data})
\begin{equation}\label{estimate 2}
m_j^{1/2 + s}\big|\zeta_{m_j}^{(m_j, \delta)} - \xi_{m_j}^{(m_j, \delta)}\big|\cdot  
\big|\exp\big( it \omega_{m_j}( \zeta^{(m_j, \delta) }) \big)\big| \le \delta m_j^{ s/2} ,
\end{equation}
one then concludes, by comparing only the $m_j$th component of $\mathcal S_B^t(\zeta^{(m_j, \delta)})$
with the one of  $ \mathcal S_B^t(\xi^{(m_j, \delta)})$ (cf. \eqref{formula for S_B}) that
\begin{align}
\big\|\mathcal S_B^t & (\zeta^{(m_j, \delta)})-\mathcal S_B^t(\xi^{(m_j,\delta)})\big\|_{\h^{1/2 + s}_+}\ge 
m_j^{1/2 + s}\big|\zeta_{m_j}^{(m_j, \delta)}  (t) - \xi_{m_j}^{(m_j,\delta)} (t)\big|\nonumber\\
& \ge  m_j^{1/2 + s} | \xi_{m_j}^{(m_j, \delta)} | \cdot\Big|\exp\big( it \omega_{m_j}( \zeta^{(m_j, \delta)} )\big) -
\exp\big( it \omega_{m_j}( \xi^{(m_j, \delta) } ) \big)\Big|\nonumber\\
&-m_j^{1/2+s}\big|\zeta_{m_j}^{(m_j, \delta)}-\xi_{m_j}^{(m_j,\delta)}\big|\,.\nonumber
\end{align}
This together with \eqref{estimate 1} and \eqref{estimate 2} yields
$$
\big\|\mathcal S_B^t  (\zeta^{(m_j, \delta)})-\mathcal S_B^t(\xi^{(m_j, \delta)})\big\|_{\h^{1/2 + s}_+}\ge  
\big((1+m_j^{s})^{1/2}-m_j^{s/2}\big)\delta\,,
$$
with the latter expression converging to $\delta>0$ as $j\to\infty$, whereas by \eqref{difference initial data},
$$
\big\|\zeta^{(m_j,\delta)}-\xi^{(m_j,\delta)}\big\|_{\h^{1/2 + s}_+}
=\delta\, m_j^{s/2}\, , 
$$
which converges to $0$ as  $j\to\infty$. This completes the proof of Lemma \ref{lem:nowhere locally uniformly continuous}.
\end{proof}

\begin{proof}[Proof of Theorem \ref{th:well-posedness}(i)]
The claimed result follows directly from Lemma \ref{lem:nowhere locally uniformly continuous} and 
Theorem \ref{th:Phi}. 
\end{proof}

\smallskip

To prove Theorem \ref{th:well-posedness}(ii), we first need to make some preliminary considerations. 
Denote by $\ell^\infty_{c,0}$ the space $\ell^\infty(\Z \setminus\{0\}, \C)$ 
of bounded complex valued sequences $z = (z_n)_{n \ne 0}$, endowed with the supremum norm. 
It is a Banach space and can be identified with the subspace of $\ell^\infty_c$, consisting of sequences $(z_n)_{n \in \Z}$ with $z_0 = 0$.
Note that  $\ell^\infty_{c,0}$ is a Banach algebra with respect to the multiplication
\begin{equation}\label{eq:l^infty-Banach_algebra}
\ell^\infty_{c,0}\times\ell^\infty_{c,0}\to\ell^\infty_{c,0},\quad(z,w)\mapsto z\bcdot w:=(z_n w_n)_{n\ne 0}.
\end{equation}
Similarly, for any $s\in\R$, the bilinear map
\begin{equation}\label{eq:h*l^infty-multiplication}
\h^s_{c,0}\times\ell^\infty_{c,0}\to\h^s_{c,0},\quad(z,w)\mapsto z\bcdot w,
\end{equation}
is bounded. For any given $T>0$ and $s\in\R$ introduce the $\C$-Banach spaces
\[
\mathcal{C}_{T, s}:=C\big([-T,T],\h^s_{c,0}\big)\quad\text{\rm and}\quad
\mathcal{C}_{T, \infty} :=C\big([-T,T],\ell^\infty_{c,0}\big),
\] 
endowed with supremum norm.
Elements of these spaces are denoted by $\xi$, or more explicitly, $\xi(t) = (\xi_n(t))_{n \ne 0}$.
The multiplication in \eqref{eq:l^infty-Banach_algebra} and \eqref{eq:h*l^infty-multiplication} induces
in a natural way the following bounded bilinear maps
\begin{equation}\label{eq:multiplication_on_curves}
\mathcal{C}_{T, \infty} \times\mathcal{C}_{T, \infty} \to\mathcal{C}_{T, \infty},\qquad
\mathcal{C}_{T, s} \times\mathcal{C}_{T, \infty}\to\mathcal{C}_{T, s},  \qquad
\mathcal{C}_{T,s} \times \ell^\infty_{c,0}\to\mathcal{C}_{T,s}.
\end{equation}
For example, the boundedness of the first map in \eqref{eq:multiplication_on_curves} follows from 
the Banach algebra property of the multiplication in $\ell^\infty_{c,0}$ and the estimate
\begin{align*}
\|\xi^{(1)}\bcdot\xi^{(2)}\|_{\mathcal C_{T,\infty}}&=
\sup_{t\in[-T,T]}\|\xi^{(1)}(t)\bcdot\xi^{(2)}(t)\|_{\ell_c^\infty}\le
\sup_{t\in[-T,T]}\|\xi^{(1)}(t)\|_{\ell_c^\infty}\|\xi^{(2)}(t)\|_{\ell_c^\infty}\\
&\le\|\xi^{(1)}\|_{\mathcal C_{T,\infty}}\|\xi^{(2)}\|_{\mathcal C_{T,\infty}},\qquad \xi^{(1)},\xi^{(2)}\in \mathcal C_{T,\infty}.
\end{align*}
Hence, $\big(\mathcal{C}_{T, \infty},\bcdot\big)$ is a Banach algebra.
The following lemma can be shown in a straightforward way and hence we omit its proof.

\begin{Lem}\label{lem:exp-analytic}
The map 
$\mathcal{C}_{T,\infty} \to \mathcal{C}_{T,\infty}$, 
$\xi \mapsto 
e^\xi:=\big(e^{\xi_n}\big)_{n\ne 0}$,
is analytic.
\end{Lem}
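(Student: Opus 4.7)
My plan is to establish analyticity of $\xi \mapsto e^\xi$ by exhibiting it as the sum of an everywhere-convergent power series, using the Banach algebra structure of $\mathcal C_{T,\infty}$ recorded just before the lemma.

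First I would observe that the componentwise multiplication on $\mathcal C_{T,\infty}$ is a bounded bilinear map with $\|\xi \bcdot \eta\|_{\mathcal C_{T,\infty}} \le \|\xi\|_{\mathcal C_{T,\infty}} \|\eta\|_{\mathcal C_{T,\infty}}$, so iterated products make sense: for each integer $k \ge 0$, the $k$-homogeneous map
\[
P_k : \mathcal C_{T,\infty} \to \mathcal C_{T,\infty}, \qquad \xi \mapsto \xi^{\bcdot k} := \underbrace{\xi \bcdot \xi \bcdot \cdots \bcdot \xi}_{k \text{ times}}
\]
(with $P_0(\xi)$ the constant sequence $(1)_{n \ne 0}$) is a bounded polynomial satisfying $\|P_k(\xi)\|_{\mathcal C_{T,\infty}} \le \|\xi\|_{\mathcal C_{T,\infty}}^k$. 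Its polarization is the bounded symmetric $k$-linear form $(\xi^{(1)},\dots,\xi^{(k)}) \mapsto \xi^{(1)} \bcdot \cdots \bcdot \xi^{(k)}$, hence each $P_k$ is analytic as a map of Banach spaces.

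Next I would consider the formal series
\[
E(\xi) := \sum_{k = 0}^\infty \frac{1}{k!} P_k(\xi).
\]
Thanks to the Banach algebra estimate, the partial sums satisfy the termwise bound $\|k!^{-1} P_k(\xi)\|_{\mathcal C_{T,\infty}} \le \|\xi\|_{\mathcal C_{T,\infty}}^k/k!$, so the series converges absolutely and uniformly on every bounded subset of $\mathcal C_{T,\infty}$, with $\|E(\xi)\|_{\mathcal C_{T,\infty}} \le e^{\|\xi\|_{\mathcal C_{T,\infty}}}$. This exhibits $E$ as an entire (hence analytic) map $\mathcal C_{T,\infty} \to \mathcal C_{T,\infty}$ in the sense of analytic maps between complex Banach spaces (cf.\ \cite[Theorem A.3]{KP-book} or any standard reference).

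Finally, I would identify $E(\xi)$ with $e^\xi$ pointwise. For any $t \in [-T,T]$ and any $n \ne 0$, evaluation at $(t,n)$ is a continuous linear functional on $\mathcal C_{T,\infty}$, so
\[
E(\xi)_n(t) = \sum_{k=0}^\infty \frac{\xi_n(t)^k}{k!} = e^{\xi_n(t)},
\]
and the sequence $(e^{\xi_n(t)})_{n \ne 0}$ is bounded by $e^{\|\xi\|_{\mathcal C_{T,\infty}}}$, so $E(\xi) \in \mathcal C_{T,\infty}$ and coincides with the map $\xi \mapsto e^\xi$ defined in the statement. The only subtle point — and the one I would be most careful about — is confirming that the sequence $(e^{\xi_n(t)})_{n\ne 0}$ lies in $\ell^\infty_{c,0}$ and depends continuously on $t$ in that norm, which is exactly what the uniform convergence of the power series on bounded sets gives, so no genuine obstacle arises.
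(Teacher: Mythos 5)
Your proof is correct; note that the paper itself omits the proof of Lemma \ref{lem:exp-analytic}, stating only that it ``can be shown in a straightforward way.'' Your argument --- expanding $e^\xi$ as the norm-convergent power series $\sum_{k\ge 0}\xi^{\bcdot k}/k!$ in the Banach algebra $\big(\mathcal{C}_{T,\infty},\bcdot\big)$, invoking analyticity of each bounded homogeneous polynomial $P_k$ together with uniform convergence on bounded sets, and identifying the sum componentwise via the continuous evaluation functionals --- is exactly the standard argument the authors have in mind, and all steps check out.
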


For any $s>-1/2$ and any $n\ge 1$, the $n$th frequencies \eqref{eq:formula_nth_frequency} of the
Benjamin-Ono equation extends from $\h_{r,0}^{\frac{1}{2}+s}$ (cf. Remark \ref{rem:Birkhoff_maps}) to 
an analytic function on $\h_{c,0}^{\frac{1}{2}+s}$ given by
\begin{equation}\label{eq:omega_n}
\omega_n(\zeta) = n^2+\Omega_n(\zeta),
\end{equation}
where 
\begin{equation}\label{eq:Omega_n}
\Omega_n(\zeta):=-2\sum_{k=1}^{n} k\,\zeta_{-k}\zeta_k
- 2n\sum_{k=n+1}^\infty\zeta_{-k}\zeta_k,\qquad\zeta\in\h_{c,0}^{\frac{1}{2}+s}.
\end{equation}
For $n\le-1$ we set 
\begin{equation}\label{eq:Omega_{-n}}
\Omega_n(\zeta):=-\Omega_{-n}(\zeta),\quad\zeta\in\h_{c,0}^{\frac{1}{2}+s}.
\end{equation}
We have the following

\begin{Lem}\label{eq:Omega-analytic}
For any $s\ge 0$ the map
\begin{equation}\label{eq:Omega-map}
\Omega : \h_{c,0}^{\frac{1}{2}+s}\to\ell^\infty_{c,0},\quad\zeta\mapsto\big(\Omega_n(\zeta)\big)_{n\ne 0},
\end{equation}
is analytic.
\end{Lem}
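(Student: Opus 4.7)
The plan is to realize $\Omega$ as a bounded quadratic map between complex Banach spaces, from which analyticity follows for free. For each $n\ge 1$, the functional $\Omega_n$ defined in \eqref{eq:Omega_n} is the restriction to the diagonal of the symmetric bilinear form
\[
B_n(\zeta,\eta):=-\sum_{k=1}^{n}k\bigl(\zeta_{-k}\eta_k+\zeta_k\eta_{-k}\bigr)
-n\sum_{k=n+1}^{\infty}\bigl(\zeta_{-k}\eta_k+\zeta_k\eta_{-k}\bigr),
\]
so that $\Omega_n(\zeta)=B_n(\zeta,\zeta)$; for $n\le -1$ set $B_n:=-B_{-n}$, consistently with \eqref{eq:Omega_{-n}}.

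The crucial step is the uniform estimate
\[
\sup_{n\ne 0}\,|B_n(\zeta,\eta)|\le 2\,\|\zeta\|_{\h^{1/2+s}_{c,0}}\|\eta\|_{\h^{1/2+s}_{c,0}},
\qquad\forall\,\zeta,\eta\in\h^{1/2+s}_{c,0}.
\]
It suffices to treat $n\ge 1$. The head sum already carries the weight $k$. For the tail, the elementary inequality $n\le k$ for $k\ge n+1$ lets us absorb the prefactor $n$ into the summand, and we obtain the $n$-free majorant
\[
|B_n(\zeta,\eta)|\le\sum_{k\ge 1}k\bigl(|\zeta_{-k}||\eta_k|+|\zeta_k||\eta_{-k}|\bigr).
\]
Two applications of Cauchy--Schwarz then bound this by $2\|\zeta\|_{\h^{1/2}}\|\eta\|_{\h^{1/2}}$, which for $s\ge 0$ is at most $2\|\zeta\|_{\h^{1/2+s}}\|\eta\|_{\h^{1/2+s}}$ by the continuous embedding $\h^{1/2+s}_{c,0}\hookrightarrow\h^{1/2}_{c,0}$.

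Assembling the coordinates, the uniform estimate means precisely that
\[
B:\h^{1/2+s}_{c,0}\times\h^{1/2+s}_{c,0}\to\ell^\infty_{c,0},\qquad
(\zeta,\eta)\mapsto\bigl(B_n(\zeta,\eta)\bigr)_{n\ne 0},
\]
is a bounded symmetric bilinear map between complex Banach spaces. Since bounded multilinear maps between complex Banach spaces are analytic, and $\Omega$ is the composition of the diagonal embedding $\zeta\mapsto(\zeta,\zeta)$ with $B$, the map $\Omega:\h^{1/2+s}_{c,0}\to\ell^\infty_{c,0}$ is analytic. I do not anticipate any real obstruction; the only point genuinely requiring care is the uniformity in $n$ of the estimate on the tail of the second sum in \eqref{eq:Omega_n}, which is exactly where the inequality $n\le k$ on $k\ge n+1$ is decisive.
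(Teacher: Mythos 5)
Your proof is correct and rests on exactly the same key estimate as the paper's: bounding the tail via $n\le k$ for $k\ge n+1$ to obtain the $n$-independent majorant $2\sum_{k\ge1}k|\zeta_{-k}||\zeta_k|\le 2\|\zeta\|_{\h^{1/2+s}}^2$. The only (immaterial) difference is the final functional-analytic step: you realize $\Omega$ directly as the diagonal of a bounded symmetric bilinear map into $\ell^\infty_{c,0}$, whereas the paper deduces analyticity from local boundedness together with the analyticity of each coordinate $\Omega_n$ via \cite[Theorem A.3]{KP-book}.
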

\begin{proof}[Proof of Lemma \ref{eq:Omega-analytic}] 
Let $s\ge 0$.
Then for any $n \ge 1$,  $\zeta\in\h^{\frac{1}{2}+s}_{c,0}$, one has by \eqref{eq:Omega_n}
\begin{align*}
\big|\Omega_n(\zeta)\big|&\le 2\,\Big|\sum_{k=1}^{n} k\,\zeta_{-k}\zeta_k\Big|
+2\,\Big|n\sum_{k=n+1}^\infty\zeta_{-k}\zeta_k\Big|
\le2\,\sum_{k=1}^\infty|k||\zeta_{-k}||\zeta_k|\le 2\,\|\zeta\|_{\h^{1/2+s}_c}^2.
\end{align*}
This, together with \eqref{eq:Omega_{-n}} then implies that for any $s\ge 0$, the map \eqref{eq:Omega-map} is 
well defined and locally bounded. The estimate above also shows that for any $n\ne 0$ the $n$th component of 
$\eqref{eq:Omega-map}$ is a (continuous) quadratic form in $\zeta\in\h^{\frac{1}{2}+s}_{c,0}$
and hence analytic. The lemma then follows from \cite[Theorem A.3]{KP-book}.
\end{proof}

We now consider the curves $\xi^{(1)},\xi^{(2)}\in\mathcal{C}_{T,\infty}$, defined by
\begin{equation}\label{eq:gamma1}
\xi^{(1)} : [-T,T]\to\ell^\infty_{c,0},\qquad \xi_n^{(1)}(t):= e^{i \sign(n)\,n^2 t} , \ \ n\ne 0,
\end{equation}
and, respectively, 
\begin{equation}\label{eq:gamma2}
\xi^{(2)} : [-T,T]\to\ell^\infty_{c,0},\qquad \xi_n^{(2)}(t):=t , \ \ n\ne 0.
\end{equation}
The following corollary follows from Lemma \ref{lem:exp-analytic}, Lemma \ref{eq:Omega-analytic}, 
and the continuity of the bilinear maps in \eqref{eq:multiplication_on_curves}.

\begin{Coro}\label{coro:analyticity_of_S_B}
For any $s\ge 0$, the map (cf. \eqref{formula for S_B})
\begin{equation}\label{eq:S_{B,T}}
\mathcal{S}_{B,T} : \h^{\frac{1}{2}+s}_{c,0}\to\mathcal{C}_{T, \frac{1}{2}+s},\quad
\zeta\mapsto\zeta\bcdot \xi^{(1)}\bcdot e^{i\,\Omega(\zeta)\bcdot\xi^{(2)}},
\end{equation}
is analytic.
\end{Coro}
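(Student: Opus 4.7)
The plan is to express $\mathcal{S}_{B,T}$ as a composition of analytic maps built from the three pieces supplied by the statement: Lemma \ref{lem:exp-analytic} (analyticity of the coordinate-wise exponential), Lemma \ref{eq:Omega-analytic} (analyticity of $\Omega$), and the continuity of the bilinear multiplications in \eqref{eq:multiplication_on_curves}. Two elementary ingredients are used throughout: any bounded multilinear map between complex Banach spaces is analytic, and the ``constant curve'' embeddings
\[
\iota_\infty : \ell^\infty_{c,0}\hookrightarrow \mathcal{C}_{T,\infty},\quad
\iota_s : \h^{\frac{1}{2}+s}_{c,0}\hookrightarrow \mathcal{C}_{T,\frac{1}{2}+s},
\]
sending an element to the corresponding constant curve, are bounded linear (hence analytic).

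First I would handle the ``phase'' factor. By Lemma \ref{eq:Omega-analytic}, the map $\zeta\mapsto\Omega(\zeta)$ is analytic from $\h^{\frac{1}{2}+s}_{c,0}$ into $\ell^\infty_{c,0}$; composing with $\iota_\infty$ gives an analytic map $\zeta\mapsto \iota_\infty(\Omega(\zeta))\in\mathcal{C}_{T,\infty}$. Since the first bilinear map in \eqref{eq:multiplication_on_curves} is continuous and $\xi^{(2)}\in\mathcal{C}_{T,\infty}$ is fixed, the operator $\eta\mapsto i\,\eta\bcdot\xi^{(2)}$ is bounded linear on $\mathcal{C}_{T,\infty}$, hence analytic. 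Composing these analytic maps and then applying Lemma \ref{lem:exp-analytic} yields that
\[
E : \h^{\frac{1}{2}+s}_{c,0}\to \mathcal{C}_{T,\infty},\quad \zeta\mapsto e^{i\,\Omega(\zeta)\bcdot\xi^{(2)}},
\]
is analytic.

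Next I would multiply by $\xi^{(1)}$. Using once more the continuity of the first bilinear map in \eqref{eq:multiplication_on_curves} and the fact that $\xi^{(1)}\in\mathcal{C}_{T,\infty}$ is fixed, the operator $\eta\mapsto \xi^{(1)}\bcdot\eta$ is bounded linear on $\mathcal{C}_{T,\infty}$, and so $\zeta\mapsto \xi^{(1)}\bcdot E(\zeta)$ is analytic from $\h^{\frac{1}{2}+s}_{c,0}$ into $\mathcal{C}_{T,\infty}$. Finally, the second bilinear map in \eqref{eq:multiplication_on_curves},
\[
B : \mathcal{C}_{T,\frac{1}{2}+s}\times\mathcal{C}_{T,\infty}\to\mathcal{C}_{T,\frac{1}{2}+s},\quad (\gamma,\eta)\mapsto\gamma\bcdot\eta,
\]
is continuous bilinear, hence analytic, and therefore the composition
\[
\zeta\ \mapsto\ B\bigl(\iota_s(\zeta),\,\xi^{(1)}\bcdot E(\zeta)\bigr)\,=\,\zeta\bcdot\xi^{(1)}\bcdot e^{i\,\Omega(\zeta)\bcdot\xi^{(2)}}
\]
is analytic from $\h^{\frac{1}{2}+s}_{c,0}$ into $\mathcal{C}_{T,\frac{1}{2}+s}$. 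This is exactly $\mathcal{S}_{B,T}$, proving the corollary.

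There is no substantial obstacle here; the only points worth checking carefully are that each intermediate space matches the correct bilinear pairing in \eqref{eq:multiplication_on_curves} (in particular that the final product is taken between a $\mathcal{C}_{T,\frac{1}{2}+s}$-curve and a $\mathcal{C}_{T,\infty}$-curve, so that the output lies in $\mathcal{C}_{T,\frac{1}{2}+s}$), and that constant embeddings into the curve spaces are bounded linear so that $\zeta$ and $\Omega(\zeta)$, viewed as constant curves, can legitimately be multiplied against the time-dependent factors $\xi^{(1)}$ and $\xi^{(2)}$.
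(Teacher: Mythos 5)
Your proposal is correct and follows exactly the route the paper takes: the paper simply states that the corollary follows from Lemma \ref{lem:exp-analytic}, Lemma \ref{eq:Omega-analytic}, and the continuity of the bilinear maps in \eqref{eq:multiplication_on_curves}, and your argument is the careful unwinding of that composition (with the constant-curve embeddings and the bounded-multilinear-implies-analytic fact made explicit). No gaps.
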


\begin{proof}[Proof of Theorem \ref{th:well-posedness}(ii)] 
Let $s \ge 0$ and let $U^s$ be the neighborhood of zero in $H^s_{c,0}$ of Theorem \ref{th:Phi},
so that $\Phi: U^s \to \Phi(U^s) \subseteq\h^{\frac{1}{2}+s}_{c,0}$ is a diffeomorphism.
According to Corollary \ref{coro:analyticity_of_S_B} there exists a neighborhood $W$ of zero in 
$\h^{\frac{1}{2}+s}_{c,0}$, which is contained in $\Phi(U^s)$, so that for any $\zeta \in W$, 
one has $\mathcal S^t_B(\zeta) \in \Phi(U^s)$ for any $t \in [-T, T]$. Let $V^s := \Phi^{-1}(W)$.
By Corollary \ref{coro:analyticity_of_S_B}, Theorem \ref{th:Phi},
Lemma \ref{lem:push-forward} below, and the fact $\mathcal{S}_0^t(u_0)=( \Phi^{-1}\circ S^t_B\circ\Phi)(u_0)$
for any $u_0\in V^s$ and $t\in[-T,T]$, it then follows that 
$\mathcal S_{0, T}$ extends to an analytic map $V^s \to C\big([-T, T], H^s_{c, 0}\big)$.
\end{proof}

The following lemma is used in the proof of Theorem \ref{th:well-posedness}(ii).
To state it, we first need to introduce some more notation.
Let $X$ be a complex Banach space with norm $\| \cdot \| \equiv \|\cdot \|_X$. For any $T > 0$, denote by $C\big([-T,T],X\big)$
the Banach space of continuous functions $x: [-T, T] \to X$, endowed with the supremum norm $\|x\|_{T, X} := \sup_{t \in [-T, T]} \| x(t) \|$. 
Furthermore, for any open neighborhood $U$ of $X$, denote by 
$C\big([-T,T],U\big)$ the subset of $C\big([-T,T], X\big)$, consisting of continuous functions $[-T, T ] \to X$ with values in $U$. 
\begin{Lem}\label{lem:push-forward}
Let $f : U\to Y$ be an analytic map from an open neighborhood  $U$ in $X$ where 
$X$ and $Y$ are complex Banach spaces. Then for any $T > 0$, the associated push forward map
\[
f_\star : C\big([-T,T],U\big)\to C\big([-T,T],Y\big),\quad [ t \mapsto x(t)]  \mapsto\big[t\mapsto f(x(t))\big],
\]
is analytic. 
\end{Lem}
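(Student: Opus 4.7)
The plan is to show that $f_\star$ admits, around every point of $C([-T,T],U)$, a convergent power series expansion in $C([-T,T],Y)$, which is the standard characterization of analyticity between complex Banach spaces used throughout the paper. First we fix an arbitrary $x_0\in C([-T,T],U)$ and set $K:=x_0([-T,T])$, which is compact and contained in the open set $U$. Using continuity of the analytic map $f$ on $U$, we choose $r>0$ and $M>0$ so that the open $r$-neighborhood $K_r:=\{x\in X:\text{dist}(x,K)<r\}$ satisfies $K_r\subset U$ and $\sup_{x\in K_r}\|f(x)\|_Y\le M$; as a by-product the open ball of radius $r$ about $x_0$ in $C([-T,T],X)$ lies in $C([-T,T],U)$, so $C([-T,T],U)$ is an open subset of the Banach space $C([-T,T],X)$.

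For each $p\in K$ the analyticity of $f$ on $U$ gives the Taylor expansion
\[
f(p+h)=\sum_{n\ge 0} P_n^p(h),\qquad \|h\|_X<r,
\]
where $P_n^p$ is a continuous $n$-homogeneous polynomial $X\to Y$ associated with a symmetric $n$-linear form $M_n^p:X^n\to Y$, and Cauchy's estimates applied to the complex disks $\lambda\mapsto p+\lambda h$ (whose images sit in $K_r$) yield the uniform bound $\|P_n^p(h)\|_Y\le M(\|h\|_X/r)^n$ valid for every $p\in K$ and every $\|h\|_X<r$. For each $n\ge 0$ we then introduce the candidate $n$-th Taylor coefficient of $f_\star$ at $x_0$,
\[
Q_n:C([-T,T],X)\to C([-T,T],Y),\qquad Q_n(y)(t):=P_n^{x_0(t)}(y(t)),
\]
and verify that $Q_n$ is a bounded $n$-homogeneous polynomial of norm at most $M/r^n$. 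The required bounded symmetric $n$-linearity of $(y_1,\dots,y_n)\mapsto \tilde M_n(y_1,\dots,y_n)$, defined by $\tilde M_n(y_1,\dots,y_n)(t):=M_n^{x_0(t)}(y_1(t),\dots,y_n(t))$ with $Q_n(y)=\tilde M_n(y,\dots,y)$, will reduce to checking that its values lie in $C([-T,T],Y)$; this will in turn follow from joint continuity of the evaluation $(p,h_1,\dots,h_n)\mapsto M_n^p(h_1,\dots,h_n)$, itself a consequence of the continuity of $p\mapsto d^n f(p)$, which holds because Fréchet derivatives of an analytic map depend analytically on the base point.

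Finally, for any $y\in C([-T,T],X)$ with $\|y\|_{T,X}<r$ the curve $x_0+y$ takes values in $K_r\subset U$, so the pointwise Taylor identity $f(x_0(t)+y(t))=\sum_n P_n^{x_0(t)}(y(t))$ holds for every $t\in[-T,T]$, and the uniform bound $\|P_n^{x_0(t)}(y(t))\|_Y\le M(\|y\|_{T,X}/r)^n$ yields absolute uniform convergence in $t$. Summation in $C([-T,T],Y)$ then produces the convergent power series identity $f_\star(x_0+y)=\sum_{n\ge 0}Q_n(y)$ on the open ball of radius $r$ about $x_0$, which is the claimed analyticity at $x_0$; since $x_0$ was arbitrary, the lemma will follow. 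The main (though rather mild) obstacle will be establishing the continuity in $t$ of each $Q_n(y)$, i.e.\ the continuity of $p\mapsto M_n^p$ along the compact curve $K$; this is the one point where the full strength of analyticity of $f$ (and not merely continuity) actively enters the argument.
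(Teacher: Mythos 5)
Your proposal is correct, but it takes a genuinely different route from the paper. The paper's proof works only at first order: it uses the compactness of $x_0([-T,T])$ and the local boundedness of $f$ to bound $f_\star$ uniformly near $x_0$, then applies Cauchy's integral formula to the difference quotients $\mu^{-1}\big(f(x(t)+\mu h(t))-f(x(t))\big)$ to show that $f_\star$ is complex Fr\'echet differentiable with a uniform $O(|\mu|)$ error, and concludes by the standard equivalence that a $C^1$ map between complex Banach spaces is analytic (citing \cite{PTrub}). You instead build the full Taylor expansion: uniform Cauchy estimates $\|P_n^p(h)\|_Y\le M(\|h\|_X/r)^n$ for $p$ in the compact image, push each homogeneous term forward to $Q_n(y)(t)=P_n^{x_0(t)}(y(t))$, and sum. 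Your one delicate point --- that $Q_n(y)$ is continuous in $t$ --- is handled correctly via the continuity of $p\mapsto d^nf(p)$ (it could also be obtained more cheaply from the integral representation $P_n^{x_0(t)}(y(t))=\frac{1}{2\pi i}\oint_{|\lambda|=1}\lambda^{-n-1}f(x_0(t)+\lambda y(t))\,d\lambda$, which only uses continuity of $f$ and of $x_0,y$, exactly in the spirit of the paper's first-order argument). The trade-off: the paper's argument is shorter and needs only first-order information plus a quotable equivalence, while yours yields an explicit power series for $f_\star$ with an explicit radius of convergence and coefficient bounds, at the price of invoking more of the standard theory of Banach-space-valued holomorphic maps (convergence of the Taylor series on the full ball of boundedness, polarization, analyticity of higher derivatives). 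Both are complete proofs.
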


\begin{proof}[Proof of Lemma \ref{lem:push-forward}]
Let $x_0\in C\big([-T,T],U\big)$ be given.
Since $x_0\big([-T,T]\big)$ is compact in $U$ and 
$f : U \to Y$ is locally bounded there exist $M>0$ and $r>0$ so that for any $x \in C\big([-T,T], U\big)$ with 
$\|x-x_0\|_{T, X} < 2 r$ one has
\begin{equation}\label{eq:f_*-boundedness}
\| f_*x\|_{T, Y} := \sup_{t \in [-T, T]} \|f(x(t))\|_Y \le M, 
\end{equation}
where  $\| \cdot \|_Y$ denotes the norm of $Y$.
For any given $t\in[-T,T]$ we obtain from the analyticity of the map $f : U\to Y$ and Cauchy's formula that 
for any $x$, $h\in C\big([-T,T],U\big)$ such that 
\begin{equation}\label{eq:gamma,h-uniformity_condition}
\|x- x_0\|_{T, X}<r/2, \qquad  \|h\|_{T, X}<r,
\end{equation}
we have that for any $0<|\mu|<1/2$,
\begin{align*}
\frac{f\big(x(t)+\mu h(t)\big)-f(x(t))}{\mu}=
\frac{1}{2\pi i}\oint_{|\lambda|=1}\frac{f\big(x(t)+\lambda h(t)\big)}{\lambda(\lambda-\mu)}\,d\lambda
\end{align*}
and
\[
d_{x(t)}f\big(h(t)\big)=\frac{1}{2\pi i}\oint_{|\lambda|=1}
\frac{f\big(x(t)+\lambda h(t)\big)}{\lambda^2}\,d\lambda
\]
These identities imply that for any $x$, $h\in C\big([-T,T],U\big)$ satisfying \eqref{eq:gamma,h-uniformity_condition},
and for any $t\in[-T,T]$, $0<|\mu|<1/2$, one has
\begin{align*}
\left\|\frac{f\big(x(t)+ \mu h(t)\big)-f(x(t))}{\mu}-d_{x(t)}f\big(h(t)\big)\right\|_Y&\!\!\!\!\!=
\left\|\frac{\mu}{2\pi i}\oint_{|\lambda|=1}\!\!\!\!\frac{f\big(x(t)+\mu h(t)\big)}{\lambda^2(\lambda-\mu)} d \lambda \right\|_Y\\
&\le 2M |\mu |
\end{align*}
where we used \eqref{eq:f_*-boundedness}. Expressed in terms of the push forward map $f_*$, it means that for any 
$x$, $h\in C\big([-T,T],U\big)$ satisfying \eqref{eq:gamma,h-uniformity_condition}, 
\begin{equation*}
\left\|\frac{f_*\big(x+ \mu h\big)-f_*(x)}{\mu}- d_{x}f_*(h)\right\|_{T, Y}\le 2M|\mu| , \qquad 0<|\mu|<1/2 .
\end{equation*}
Therefore,
$f_* : C\big([-T,T],U\big)\to C\big([-T,T],Y\big)$ is a $C^1$-map between $\C$-Banach spaces (see e.g. \cite[Problem 3]{PTrub})
and hence analytic.
\end{proof}

It remains to prove the Addendum to Theorem \ref{th:well-posedness} (ii), stated in Section \ref{Introduction}.

\begin{proof}[Proof of Addendum to Theorem \ref{th:well-posedness}(ii)] 
Corollary \ref{coro:analyticity_of_S_B} continues to hold when $\mathcal{C}_{T, {\frac{1}{2}+s}}$ is replaced by 
the Banach space
$C^k\big([-T,T], \h^{\frac{1}{2}+s-2k}_{c,0}\big)$, $k\ge 1$, of $k$ times continuously differentiable functions 
$\xi : [-T, T] \to  \h^{\frac{1}{2}+s-2k}_{c,0}$. For any $k \ge 1$, this follows from the proof of 
Theorem \ref{th:well-posedness}(ii) and the fact that the curve $\zeta\bcdot\xi^{(1)}$ 
[resp. $e^{i\,\Omega(\zeta)\bcdot\xi^{(2)}}$], which appears as a factor on the right side of 
\eqref{eq:S_{B,T}}, belongs to $C^k\big([-T,T],\h^{\frac{1}{2}+s-2k}_{c,0}\big)$ 
[resp. $C^k\big([-T,T],\ell^\infty_{c,0}\big)$]. 
By combining this with Theorem \ref{th:Phi} one obtains the claimed result.
\end{proof}

\appendix

\section{Symmetries of the Lax operator}\label{Appendix symmetries}
In this appendix we study symmetries of the Lax operator $L_u$. 
Recall that in \cite{GKT2}, we defined and studied the Lax operator 
$L_u = D - T_u: H^{1+s}_+  \to H^s_+$ for $u\in H^{s}_c$, $-1/2 < s \le 0$. 
In a similar way one defines
\begin{equation}\label{eq:L^-}
L_u^-:= - D-T_u^- :  H^{1+s}_-   \to  H^s_- 
\end{equation}
where $T^-_u: H^{1+s}_-\to H^{s}_-$ is the Toeplitz operator with potential $u$, 
\begin{equation*}
T^-_u f:=\Pi^-(u f),\quad f\in H^{1+s}_-,
\end{equation*}
and $\Pi^- : H^{s}_c \to H^{s}_-$ is the Szeg\H o projector
\[
\Pi^- : H^{s}_c\to H^{s}_-,\quad \sum_{n\in\Z}\widehat v(n) e^{i n x}\mapsto\sum_{n\le 0}\widehat v(n) e^{i n x},
\]
onto the (negative) Hardy space $H^{s}_-=\big\{f\in H^{s}_c\, \big| \,\hat{f}(k)=0\,\,\,\forall k>0\big\}$.
Indeed, it follows from Lemma 1 in \cite{GKT2} that  $T^-_u: H^{1+s}_-\to H^{s}_-$ is a well defined bounded linear operator 
and hence $L_u^-$ defines an operator on $H^{s}_-$ with domain
$H^{1+s}_-$ so that the map $L_u^- : H^{1+s}_-\to H^{s}_-$ is bounded.
For $\beta \in \R$, denote by $(\, \cdot \, )_* : H^{\beta}_c\to H^{\beta}_c$ the involution, defined for $v\in H^{\beta}_c$ by
\begin{equation}\label{eq:*}
v_*(x):=v(-x),\quad x\in\T .
\end{equation}
Clearly, $(\, \cdot \, )_*$ is a $\C$-linear isometry. The Fourier coefficients of $v_*$ satisfy
\begin{equation}\label{eq:*_Fourier}
\widehat{{v}}_*(k)=\widehat{v}(-k),\quad k\in\Z .
\end{equation}

\begin{Lem}\label{lem:symmetries}
For any $u\in H^{s}_c$, $-1/2 < s \le 1/2$, we have the commutative diagrams
\[
\begin{tikzcd}
H^{1+s}_+\arrow[r, "L_u"]\arrow[d, swap, "(\cdot)_*"]&H^{s}_+\arrow[d, "(\cdot)_*"]\\
H^{1+s}_-\arrow[r, "L_{u_*}^-"]&H^{s}_-
\end{tikzcd}
\quad\quad\text{\rm and}\quad\quad
\begin{tikzcd}
H^{1+s}_+\arrow[r, "L_u"]\arrow[d, swap, "\overline{(\cdot)}"]&H^{s}_+\arrow[d, "\overline{(\cdot)}"]\\
H^{1+s}_-\arrow[r, "L_{\overline{u}}^-"]&H^{s}_-
\end{tikzcd}
\]
where $(\cdot)_*$ denotes the involution defined by \eqref{eq:*} and $\overline{(\cdot)} : H^\beta_+\to H^\beta_-$, 
$\beta\in\R$, is the complex conjugation of functions.
\end{Lem}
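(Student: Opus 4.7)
\textbf{Proof plan for Lemma \ref{lem:symmetries}.}
The plan is to verify both commutative diagrams by a direct Fourier-coefficient computation, using the two elementary identities
\[
\widehat{v_*}(k) = \widehat v(-k), \qquad \widehat{\overline v}(k) = \overline{\widehat v(-k)}, \qquad k \in \Z .
\]
First I would observe that each of the maps $(\, \cdot \,)_*$ and $\overline{(\, \cdot \,)}$ sends $H^\beta_+$ isometrically onto $H^\beta_-$: if $\widehat f(n)=0$ for all $n<0$, then $\widehat{f_*}(n)=\widehat f(-n)$ and $\widehat{\overline f}(n)=\overline{\widehat f(-n)}$ both vanish for $n>0$. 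So the diagrams at least make sense on their range spaces.

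Next I would handle the derivative part of $L_u$ and $L_u^-$. Using the Fourier characterization of $D=-i\partial_x$ one checks immediately that $\widehat{(Df)_*}(n) = -n\,\widehat{f_*}(n)$ and $\widehat{\overline{Df}}(n) = -n\,\widehat{\overline f}(n)$, which says
\[
(Df)_* = -D(f_*), \qquad \overline{Df} = -D(\overline f) .
\]
This is exactly what is needed, because the definition $L_u^- = -D - T_u^-$ in \eqref{eq:L^-} carries the sign change of $D$ built in.

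The main step, and the only one requiring a small computation, is to check that the Toeplitz parts transform as
\[
(T_u f)_* = T^-_{u_*}(f_*), \qquad \overline{T_u f} = T^-_{\overline u}(\overline f) .
\]
For the first identity I would compute
\[
\widehat{u_* f_*}(n) \;=\; \sum_{k \in \Z} \widehat u(-k)\widehat f(k-n) \;=\; \widehat{uf}(-n)
\]
after the substitution $j=-k$, and then note that both sides of the claimed identity, viewed in Fourier, are supported in $n\le 0$ with common value $\widehat{uf}(-n)$, the projectors $\Pi^+$ and $\Pi^-$ matching up via $(\, \cdot \,)_*$. The conjugation case is handled identically using $\widehat{\overline u\,\overline f}(n)=\overline{\widehat{uf}(-n)}$. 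Subtracting the $D$-identity from the Toeplitz identity yields the commutativity of each diagram.

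The only real obstacle is bookkeeping: making sure that the sign from $(Df)_*=-Df_*$ lines up with the $-D$ appearing in $L_u^-$, and that the Szeg\H o projector $\Pi^+$ on the left-hand side turns into $\Pi^-$ on the right, rather than just being dropped. Once the Fourier identities above are in hand, both pieces fit together with no analytic subtleties, and the hypothesis $-1/2 < s \le 1/2$ enters only through the boundedness of $T_u : H^{1+s}_+ \to H^s_+$ and $T^-_u : H^{1+s}_- \to H^s_-$ already established in \cite{GKT2}.
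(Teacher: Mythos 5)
Your proposal is correct and follows essentially the same route as the paper: a direct Fourier-coefficient verification that $\big(\Pi(uf)\big)_* = \Pi^-(u_*f_*)$ (and its conjugation analogue), combined with the sign identity $(Df)_* = -D(f_*)$, which is exactly how the paper argues (it writes out only the $(\cdot)_*$ diagram and notes the second is similar). The one cosmetic slip is the phrase ``subtracting the $D$-identity from the Toeplitz identity''---since $L_u = D - T_u$, it is the Toeplitz identity that gets subtracted from the $D$-identity---but this does not affect the argument.
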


\begin{proof}[Proof of Lemma \ref{lem:symmetries}]
Since the proof of the commutativity of the second diagram is similar to the proof of the one
of the first diagram we will prove only the first one.
For any $u\in H^{s}_c$ and $f\in H^{1+s}_+$ we have
\begin{align*}
\big(\Pi(u f)\big)_* &=\sum_{n\ge 0}\Big(\sum_{k\ge 0}\hu(n-k)\hat{f}(k)\Big) e^{-i n x}
=\sum_{n\le 0}\Big(\sum_{k\le 0}\hu(k-n)\hat{f}(-k)\Big) e^{i n x}\\
& =\sum_{n\le 0}\Big(\sum_{k\le 0}\hu_*(n-k)\hat{f_*}(k)\Big) e^{i n x}=\Pi^-(u_* f_*).
\end{align*}
By combining this with the fact that $(Df)_*= - D(f_*)$ we conclude that 
\[
(L_uf)_*=L_{u_*}f_*.
\]
This completes the proof of the lemma.
\end{proof}

By combining Lemma \ref{lem:symmetries} with Proposition \ref{prop:L-global} we obtain

\begin{Coro}
For any $-1/2 < s  \le 0$, there exists an open neighborhood $W\equiv W^{s}$ of $H^{s}_{r,0}$ in $H^{s}_{c,0}$,
invariant under the involution \eqref{eq:*} and the complex conjugation of functions,
so that for any $u\in W$, the operator $L_u^-$ given by \eqref{eq:L^-} is a closed operator
in $H^{s}_-$ with domain $H^{1+s}_-$. The operator has a compact resolvent and
all its eigenvalues are  simple. When appropriately listed,  $\lambda^-_n$, $n\ge 0$, 
satisfy $\re(\lambda^-_n)<\re(\lambda^-_{n+1})$ for $n \ge 0$
and $|\lambda_n^-(u)-n|\to 0$ as $n\to\infty$. 
\end{Coro}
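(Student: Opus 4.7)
The plan is to deduce the statement directly from Lemma \ref{lem:symmetries}, which furnishes the key operator identity
\[
L^-_{u_*} \, = \, (\,\cdot\,)_*\, \circ \, L_u \, \circ \, (\,\cdot\,)_*
\qquad \text{on } H^{1+s}_- \to H^s_- ,
\]
obtained by reading the first diagram of the lemma as a similarity between the operator $L_u: H^{1+s}_+ \to H^s_+$ and $L^-_{u_*}: H^{1+s}_- \to H^s_-$ via the $\C$-linear isometry $(\,\cdot\,)_*$ (which maps $H^\beta_+$ bijectively onto $H^\beta_-$ for any $\beta\in\R$). Since similarity through an isometry preserves all the properties listed in Proposition \ref{prop:L-global} (closedness, domain, compactness of the resolvent, simplicity of eigenvalues, ordering of real parts, and asymptotic behaviour), the corollary will reduce to arranging the neighborhood $W$ correctly.

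First, let $W^s_+ \subseteq H^s_{c,0}$ be the open neighborhood of $H^s_{r,0}$ supplied by Proposition \ref{prop:L-global}, so that all the stated spectral properties hold for $L_u$ whenever $u \in W^s_+$. I would then define
\[
W \equiv W^s \, := \, W^s_+ \, \cap \, (W^s_+)_* \, \cap \, \overline{W^s_+} \, \cap \, \overline{(W^s_+)_*},
\]
where $(W^s_+)_* := \{u \in H^s_{c,0} \mid u_* \in W^s_+\}$ and similarly with complex conjugation. Since $(\,\cdot\,)_*$ and $\overline{(\,\cdot\,)}$ are continuous involutions on $H^s_{c,0}$, the set $W$ is open. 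Because every $u \in H^s_{r,0}$ satisfies $u_* \in H^s_{r,0}$ and $\overline{u} = u$, the space $H^s_{r,0}$ is contained in all four sets in the intersection, so $H^s_{r,0} \subseteq W$. The defining intersection is manifestly stable under each of $(\,\cdot\,)_*$ and $\overline{(\,\cdot\,)}$ (their composition permutes the four factors), so $W$ has the required invariance.

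Next, fix $v \in W$ and set $u := v_*$; by construction, $u \in W^s_+$. Lemma \ref{lem:symmetries} then gives
\[
L^-_v = L^-_{u_*} = (\,\cdot\,)_* \, \circ \, L_u \, \circ \, (\,\cdot\,)_*,
\]
exhibiting $L^-_v$ as a $\C$-linear isometric conjugate of $L_u$. Consequently, $L^-_v$ is a closed operator in $H^s_-$ with domain $(\,\cdot\,)_*\bigl(H^{1+s}_+\bigr) = H^{1+s}_-$, and for every $\lambda$ in the resolvent set of $L_u$ one has the identity
\[
(L^-_v - \lambda)^{-1} \, = \, (\,\cdot\,)_* \, \circ \, (L_u - \lambda)^{-1} \, \circ \, (\,\cdot\,)_*,
\]
so $L^-_v$ has compact resolvent. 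The eigenvalues coincide: $\spec L^-_v = \spec L_u = \spec L_{v_*}$, they are simple with the same multiplicities, and setting $\lambda_n^-(v) := \lambda_n(v_*)$, the ordering $\re(\lambda_n^-(v)) < \re(\lambda_{n+1}^-(v))$ and the asymptotic $|\lambda_n^-(v) - n| \to 0$ both follow from the corresponding properties of $L_{v_*}$.

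There is no substantial obstacle: the entire analytic content is packaged in Proposition \ref{prop:L-global} and the algebraic identity of Lemma \ref{lem:symmetries}. The only bookkeeping is to choose $W$ small enough so that not merely $v$ but also $v_*$ (and, for the purpose of a completely symmetric statement, $\bar v$ and $\bar v_*$) lie in $W^s_+$, which is accomplished by the fourfold intersection above.
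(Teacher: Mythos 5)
Your proposal is correct and follows exactly the route the paper intends: the paper derives this corollary in one line ``by combining Lemma \ref{lem:symmetries} with Proposition \ref{prop:L-global}'', i.e.\ by transporting the spectral properties of $L_{v_*}$ to $L^-_v$ through the isometric similarity $L^-_{u_*}=(\,\cdot\,)_*\circ L_u\circ(\,\cdot\,)_*$. Your explicit construction of the fourfold intersection $W$ to guarantee the invariance under $(\,\cdot\,)_*$ and complex conjugation is the right bookkeeping and fills in the only detail the paper leaves implicit.
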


For $u\in H^{s}_{r,0}$, $ -1/2 < s \le 0$, let $(f_n(u))_{n\ge 0}$, be the eigenfunctions of $L_u$,
corresponding to the eigenvalues $(\lambda_n(u))_{n \ge 0}$, normalized 
as
\begin{equation}\label{eq:f_n-normalization}
\|f_n(u)\|=1, \ n\ge 0, \qquad 
\1 1|f_0(u)\2>0,\quad\big\1 f_n(u)|S f_{n-1}(u)\big\2>0,\ n\ge 1
\end{equation}
(cf. \cite[Definition 2.1]{GK})). Similarly, denote by $(f_n^-(u))_{n\ge 0}$, the eigenfunctions of $L_u^-$,
corresponding to the eigenvalues $(\lambda^-_n(u))_{n \ge 0}$, normalized as
\begin{equation}\label{eq:f_n^{-}-normalization}
\|f^-_n(u)\|=1, \ n\ge 0, \quad 
\1 1|f^-_0(u)\2>0,\quad \big\1 Sf^-_n(u)|f^-_{n-1}(u)\big\2>0,\ n\ge 1.
\end{equation}
It follows from Proposition \ref{prop:L-local} (cf. \eqref{P_n anal near 0}) and Lemma \ref{lem:symmetries}
that for any $-1/2 < s \le 0$, there exists 
an open neighborhood $ U^s$ of zero in $H^{s}_{c,0}$ so that for any $u\in U^s$ and $n\ge 0$,
the Riesz projector
\begin{equation*}
P^-_n(u)=
-\frac{1}{2\pi i}\oint\limits_{\partial D_n}(L^-_u-\lambda)^{-1}\,d\lambda\in\LL\big(H^{s}_-,H^{1+s}_-\big),
\end{equation*}
is well defined and  the map $U^s \to \LL\big(H^{s}_-,H^{1+s}_-\big)$, $u\mapsto P^-_n(u)$, is analytic. 
Here $\partial D_n$ is the counterclockwise oriented boundary of $D_n\equiv D_n(1/3)$ (see \eqref{def:D_n}).
Hence for any $n\ge 0$, the map
\begin{equation}\label{eq:h_n^-}
U^s \to H^{1+s}_-,\quad u\mapsto h^-_n( u):=P^-_n(u) e_{-n}\in H^{1+s}_- .
\end{equation}
is analytic. Since the map $(\, \cdot \, )_*$ and the complex conjugation are isometries, 
without of loss of generality, we can choose $U^s$ to be invariant under these two maps.
With this notation established, we can state the following

\begin{Coro}\label{coro:symmetries}
Let $-1/2 < s \le 0$.
Then for $u\in H^{s}_{r,0}$ and $n \ge 0$,
\begin{equation}\label{eq:lambda_n+symmetries}
\lambda_n^-(u)=\overline{\lambda_n(\overline{u})}=\lambda_n(u_*), 
\qquad
\overline{\lambda_n(\overline{u})}= \lambda_n(u),
\end{equation}
and
\begin{equation}\label{eq:f_n+symmetries}
f_n^-(u)=\overline{f_n(\overline{u})}=\big(f_n(u_*)\big)_*.
\end{equation}
Similarly, for any $u\in H^{s}_{r,0} \cap U^s$ and $n \ge 0$,
\begin{equation}\label{eq:h_n+symmetries}
h_n^-(u)=\overline{h_n(\overline{u})}=\big(h_n(u_*)\big)_*
\end{equation}
where $h_n(u)$ is given by \eqref{eq:h_n} and $h_n^-(u)$ by \eqref{eq:h_n^-}.
\end{Coro}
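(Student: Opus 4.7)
The plan is to transfer spectral data across the two commutative diagrams of Lemma \ref{lem:symmetries}, which I rewrite as the operator identities $(\cdot)_*\, L_u = L_{u_*}^-\,(\cdot)_*$ and $\overline{(\cdot)}\, L_u = L_{\overline u}^-\,\overline{(\cdot)}$. Here $(\cdot)_*:H^\beta_+\to H^\beta_-$ is a $\C$-linear isometric involution, while $\overline{(\cdot)}:H^\beta_+\to H^\beta_-$ is an $\R$-linear isometric involution, antilinear over $\C$. If $L_u f=\lambda f$, these identities immediately give $L_{u_*}^-(f_*)=\lambda\,f_*$ and $L_{\overline u}^-(\overline f)=\overline\lambda\,\overline f$, so the spectrum of $L_{u_*}^-$ equals that of $L_u$ while the spectrum of $L_{\overline u}^-$ equals its complex conjugate. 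Since $(\lambda_n(u))_{n\ge 0}$ and $(\lambda_n^-(u))_{n\ge 0}$ are both labeled by increasing real part with $|\lambda_n-n|\to 0$, and since both $(\cdot)_*$ and $\overline{(\cdot)}$ preserve real parts, the labels match: $\lambda_n^-(u_*)=\lambda_n(u)$ and $\lambda_n^-(\overline u)=\overline{\lambda_n(u)}$. Substituting $u\mapsto u_*$ (resp.\ $u\mapsto\overline u$) and invoking the involution property then gives the first pair of identities in \eqref{eq:lambda_n+symmetries}; the remaining identity $\overline{\lambda_n(\overline u)}=\lambda_n(u)$ for $u\in H^s_{r,0}$ is immediate since $\overline u=u$ and $\lambda_n(u)\in\R$ by Proposition \ref{prop:L-global}.

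For the eigenfunctions \eqref{eq:f_n+symmetries}: since $\lambda_n^-(u)$ is simple, each of $\overline{f_n(\overline u)}$ and $(f_n(u_*))_*$ spans the $\lambda_n^-(u)$-eigenspace of $L_u^-$ and therefore agrees with $f_n^-(u)$ up to a unimodular scalar. I would pin this scalar down by verifying the normalization \eqref{eq:f_n^{-}-normalization}. The unit-norm condition holds because both maps are isometries. The positivity conditions reduce, via the Fourier-coefficient identities $\widehat{v_*}(k)=\widehat v(-k)$ and $\widehat{\overline v}(k)=\overline{\widehat v(-k)}$, to the corresponding positivity statements for $f_n$ in \eqref{eq:f_n-normalization}; for instance, $\1 1\,|\,(f_0(u_*))_*\2=\overline{\widehat{f_0(u_*)}(0)}=\1 1\,|\,f_0(u_*)\2>0$ and $\1 1\,|\,\overline{f_0(\overline u)}\2=\widehat{f_0(\overline u)}(0)=\overline{\1 1\,|\,f_0(\overline u)\2}>0$. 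The triple-product condition $\1 Sf_n^-(u)\,|\,f_{n-1}^-(u)\2>0$ is handled by an analogous Fourier-coefficient bookkeeping that, after expanding $Sf_n^-$ on the dual side, reduces it to the positivity of $\1 Sf_{n-1}\,|\,f_n\2$ from \eqref{eq:f_n-normalization}.

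For $h_n$ in \eqref{eq:h_n+symmetries}, the cleanest route is to argue via the invariant eigenspace decomposition rather than through the contour integral defining $P_n^-$. The Riesz projector $P_n(u)$ produces an $L_u$-invariant direct sum decomposition $H^s_+=\C\, f_n(u)\oplus\ker P_n(u)$. Pushing this forward by $(\cdot)_*$ yields an $L_{u_*}^-$-invariant decomposition $H^s_-=\C(f_n(u))_*\oplus(\ker P_n(u))_*$, whose eigenspace summand is exactly the $\lambda_n^-(u_*)$-eigenspace by the eigenvalue identification established above. Uniqueness of the Riesz projector then forces $P_n^-(u_*)=(\cdot)_*\,P_n(u)\,(\cdot)_*$, and evaluating at $(e_n)_*=e_{-n}$ yields $h_n^-(u_*)=(h_n(u))_*$; substituting $u\mapsto u_*$ gives the second equality in \eqref{eq:h_n+symmetries}. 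The identical reasoning with complex conjugation in place of $(\cdot)_*$ yields $P_n^-(\overline u)\,\overline{(\cdot)}=\overline{(\cdot)}\,P_n(u)$ and therefore $h_n^-(\overline u)=\overline{h_n(u)}$, which on substituting $u\mapsto\overline u$ gives the first equality.

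The main obstacle is the conjugation case for $h_n$: because $\overline{(\cdot)}$ is antilinear over $\C$, one cannot naively pull it through the contour integral $-\frac{1}{2\pi i}\oint\!\cdots d\lambda$ defining $P_n^-(\overline u)$, and a direct derivation via resolvent identities would require a careful parametrization of $\partial D_n$ (the substitution $\lambda\mapsto\overline\lambda$ reverses orientation and interacts nontrivially with the antilinearity). The eigenspace-decomposition argument sketched above circumvents this subtlety entirely at the price of identifying $\ker P_n(u)$ as a closed $L_u$-invariant complement of $\C\, f_n(u)$, which is automatic from $P_n(u)$ being the Riesz projector associated to the simple isolated eigenvalue $\lambda_n(u)\in\partial D_n$ of a compact-resolvent operator.
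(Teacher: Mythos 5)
Your proposal is correct, and for the eigenvalue and eigenfunction identities it follows essentially the paper's own route: transfer eigenfunctions through the two intertwining relations of Lemma \ref{lem:symmetries}, match the labels via the ordering of the spectrum, and pin down the resulting unimodular scalar by checking that $\overline{f_n(\overline u)}$ and $(f_n(u_*))_*$ satisfy the normalization \eqref{eq:f_n^{-}-normalization}, using exactly the Fourier/pairing identities you list. Where you genuinely diverge is in the treatment of \eqref{eq:h_n+symmetries}. The paper works directly with the contour integral: it writes $(L_u-\lambda)^{-1}=\mathcal C^{-1}(L_{\overline u}^- -\overline\lambda)^{-1}\mathcal C$, pulls the antilinear map $\mathcal C^{-1}$ through $-\frac{1}{2\pi i}\oint_{\partial D_n}(\cdot)\,d\lambda$, and relies on the fact that the conjugation of the prefactor $-1/(2\pi i)$ and of $d\lambda$ combines with the orientation reversal under $\lambda\mapsto\overline\lambda$ (which maps $\partial D_n$ to itself, $D_n$ being centered on the real axis) to reproduce $P_n^-(\overline u)e_{-n}$ — precisely the bookkeeping you flag as delicate. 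You instead characterize $P_n^-$ abstractly as the unique projection commuting with $L^-$ that splits the space into the $\lambda_n^-$-eigenline and an invariant complement on which the spectrum avoids $\lambda_n^-$, and observe that conjugating $P_n(u)$ by either involution produces such a projection. Both arguments are sound; yours trades the explicit (and, in the antilinear case, slightly treacherous) integral manipulation for the standard uniqueness property of spectral projections, at the cost of having to verify that the pushed-forward kernel really is the complementary spectral subspace — which does follow from the intertwining and the separation of the eigenvalues, as you indicate. One small remark: since the corollary is stated for $u\in H^s_{r,0}$, one has $\overline u=u$ and $\lambda_n(u)\in\R$, so the final identity $\overline{\lambda_n(\overline u)}=\lambda_n(u)$ is indeed immediate, as you say.
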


\begin{Rem}
In a straightforward way it follows from Proposition \ref{prop:L-global}, Proposition \ref{prop:L-local}, and Remark \ref{rem:finite-gap} 
that for any $ -1/2 < s \le 0$ and $n \ge 0$, the identities 
$\lambda_n^-(u)=\lambda_n(u_*)$,
$\overline{\lambda_n(\overline{u})}=\lambda_n(u)$
(cf. \eqref{eq:lambda_n+symmetries}) and
$f_n^-(u)=\big(f_n(u_*)\big)_*$ (cf. \eqref{eq:f_n+symmetries})
extend by analyticity to a neighborhood $W^s$ of $H^{s}_{r,0}$ in $H^{s}_{c,0}$.
Similarly, the identity $h_n^-(u)= \big(h_n(u_*)\big)_*$ (cf. \eqref{eq:h_n+symmetries}) extends to the neighboordhood $U^s$.
\end{Rem}

\begin{proof}[Proof of Corollary \ref{coro:symmetries}]
Let $u\in H^{s}_{r,0}$ with $-1/2 < s \le 0$ and $n \ge 0$.
The identities \eqref{eq:lambda_n+symmetries} and the fact that 
\begin{equation}\label{eq:eigenfunctions_of_L_u^-}
\overline{f_n(\overline{u})}\quad\text{\rm and}\quad\big(f_n(u_*)\big)_*
\end{equation}
are eigenfunctions of $L_u^-$ with eigenvalue $\lambda_n^-(u)$ follow directly from 
the commutative diagrams in Lemma \ref{lem:symmetries}. 
Since for any $f,g\in H^{1+s}_{c,0}$,
$$
\|f\|=\|\overline{f}\|=\|f_*\| , \qquad 
\overline{\1 1|f\2}=\1 1|\overline{f}\2,  \qquad
\1 1|f\2=\big\1 1|f_*\big\2,
$$
and
\[
\overline{\1 S f|g\2}=\1\overline{f}|S\overline{g}\2,\qquad
\1 S f|g\2=\1 f_*|S g_*\2 ,
\]
 the normalization conditions \eqref{eq:f_n-normalization} then imply that
the eigenfunctions in \eqref{eq:eigenfunctions_of_L_u^-} satisfy the normalization 
condition \eqref{eq:f_n^{-}-normalization}.
By the simplicity of the eigenvalue $\lambda_n^-(u)$ we then conclude that 
$f_n^-(u)=\overline{f_n(\overline{u})}$ and $f_n^-(u)=\big(f_n(u_*)\big)_*$.

The last statement of the corollary follows from the definition of $h_n(u)$ in \eqref{eq:h_n} and Lemma \ref{lem:symmetries}.
Indeed, for any $u\in H^{s}_{r,0}$ and any $\lambda$ in the resolvent set of $L_u$,   Lemma \ref{lem:symmetries} implies that
\[
(L_u-\lambda)^{-1}= \mathcal C^{-1}(L_{\overline{u}}^--\bar{\lambda})^{-1} \mathcal C \qquad \text{\rm and} \qquad
(L_u-\lambda)^{-1}=\mathcal I^{-1} (L_{u_*}^--\lambda)^{-1} \mathcal I ,
\]
where $\mathcal C : H^{s}_+\to H^{s}_-$ denotes the restriction of the complex conjugation of functions to $H^s_+$ and
$\mathcal I: H^{s}_+\to H^{s}_-$ the one of the map $(\, \cdot \, )_*$.
This, together with \eqref{eq:h_n} then implies that for any $u\in H^{s}_{r,0} \cap U^s$ and $n\ge 0$ we have
\begin{eqnarray}
h_n(u)&=&
-\frac{1}{2\pi i}\oint\limits_{\partial D_n}
\mathcal C^{-1} \big((L_{\overline{u}}^--\bar{\lambda})^{-1}\,e_{-n}\big)\,d\lambda\nonumber\\
&=& \mathcal C^{-1}\Big( -\frac{1}{2\pi i}\oint\limits_{\partial D_n}
(L_{\overline{u}}^--\lambda)^{-1}\,e_{-n}\,d\lambda\Big)=\overline{h_n^-(\overline{u})}\label{eq:h_n+symmetries1} \, ,
\end{eqnarray}
where we used the definition \eqref{eq:h_n^-} of $h_n^-$.
By similar arguments one shows that  for any 
$u\in H^{s}_{r,0} \cap U^s$ and $n\ge 0$ we have
\begin{eqnarray}
h_n(u)&=& -\frac{1}{2\pi i}\oint\limits_{\partial D_n}
\mathcal I^{-1} \big((L_{u_*}^--\lambda)^{-1}\,e_{-n}\big)\,d\lambda\nonumber\\
&=& \mathcal I^{-1} \Big( - \frac{1}{2\pi i}\oint\limits_{\partial D_n}
(L_{u_*}^--\lambda)^{-1}\,e_{-n}\,d\lambda\Big)=\big(h_n^-(u_*)\big)_* , \label{eq:h_n+symmetries2}
\end{eqnarray}
where we used $\lambda_n^-(u)=\lambda_n(u_*)$ (cf. \eqref{eq:lambda_n+symmetries}), 
the assumption that $U^s$ is invariant under $\mathcal I$, and 
the definition \eqref{eq:h_n^-} of $h_n^-$.
The formulas in  \eqref{eq:h_n+symmetries} now follow from \eqref{eq:h_n+symmetries1} and \eqref{eq:h_n+symmetries2}.
\end{proof}

\end{document}